\providecommand{\U}[1]{\protect\rule{.1in}{.1in}}
\providecommand{\U}[1]{\protect\rule{.1in}{.1in}}
\providecommand{\U}[1]{\protect\rule{.1in}{.1in}}
\newtheorem{theorem}{Theorem}[section]
\newtheorem{conjecture}[theorem]{Conjecture}
\newtheorem{corollary}[theorem]{Corollary}
\newtheorem{definition}[theorem]{Definition}
\newtheorem{example}[theorem]{Example}
\newtheorem{lemma}[theorem]{Lemma}
\newtheorem{question}[theorem]{Question}
\newtheorem{proposition}[theorem]{Proposition}
\newtheorem{remark}[theorem]{Remark}
\DeclareMathOperator{\Aut}{Aut}
\DeclareMathOperator{\Mod}{Mod-}
\DeclareMathOperator{\Gr}{Gr-}
\DeclareMathOperator{\QGr}{QGr-}
\DeclareMathOperator{\Fdim}{Fdim-}
\DeclareMathOperator{\GKdim}{GKdim}
\def\L#1{\displaystyle\lim_{\rightarrow}#1}
\def\alg{\mathop{\text{\bf Alg}}}
\def\pl{\mathop{\hbox{\rm P}_l}}
\def\soc{\mathop{\hbox{\rm Soc}}}
\def\a{\alpha}
\def\b{\beta}
\def\remove#1{}
\def\path{\mathop{\hbox{\rm Path}}}
\newcommand{\N}{{\mathbb{N}}}
\newcommand{\Z}{{\mathbb{Z}}}
\newcommand{\K}{{\mathbb{K}}}
\DeclareMathOperator{\Span}{span}
\newcommand{\Sink}{\operatorname{Sink}}
\tikzstyle{vertex}=[circle, draw, fill, inner sep=0pt, minimum size=6pt]
\begin{document}
\title[Leavitt Path Algebra over Kronecker Square of Quivers]{Leavitt Path Algebra over Kronecker Square of Quivers and Cross product algebra}
\author{Jehan Alarfaj}
\address{Department of Mathematics and Statistics, Saint Louis University, St. Louis,
MO-63103, USA, Department of Mathematics, College of Science, Imam Abdulrahman Bin Faisal University, P.O. Box 1982, Dammam 31441, Saudi Arabia}
\email{jehan.alarfaj@slu.edu}
\author{Dolores Mart\'{i}n Barquero}
\address{Departamento de Matem\'{a}tica Aplicada, Escuela de Ingenier\'\i as Industriales, Campus de Teatinos, Universidad de M\'{a}laga, 29071 Málaga.  Spain}
\email{dmartin@uma.es}
\author{Ashish K. Srivastava}
\thanks{Ashish K Srivastava is the corresponding author}
\address{Department of Mathematics and Statistics, Saint Louis University, St. Louis,
MO-63103, USA}
\email{ashish.srivastava@slu.edu}
\subjclass[2020] {16S88, 16W50, 19K35, 16T05} 
\keywords{Leavitt path algebra, socle, Kronecker square, Face algebra, Cross product algebra.}
\thanks{The second author is supported by the Spanish Ministerio de Ciencia, Innovaci\'on  y Universidades through project  PID2023-152673NB-I00 and by the Junta de Andaluc\'{\i}a  through project PPRO-FQM336-G-2023 (FQM336-G-FEDER), all of them with FEDER funds. }
\begin{abstract}
In this paper, we initiate the study of Leavitt path algebra over Kronecker square of a quiver and show the similarities and contrasts in the properties of Leavitt path algebra over a quiver and its Kronecker square. Furthermore, we discuss the connection of Leavitt path algebra over Kronecker square of a quiver with Hayashi's face algebra and the cross product algebra construction of the Leavitt path algebra over the original quiver.
\end{abstract}
\maketitle

\section{Introduction}

\noindent This paper initiates the study of Leavitt path algebra over Kronecker square of quivers which is inspired by the work of Corrales Garc\'{i}a et al. in \cite{GBG} and Calder\'{o}n and Walton in \cite{CW}. Corrales Garc\'{i}a et al. \cite{GBG} give construction of cross product algebra $L_{\mathbb K}(Q_1) \otimes_G L_{\mathbb K}(Q_2)$ for any two quivers $Q_1$, $Q_2$ (here $G$ is the group of units of $\mathbb K[x, x^{-1}]$) and show that if $Q_1$ and $Q_2$ are row-finite quivers with no sinks, then there is an isomorphism between the cross product algebra $L_{\mathbb K}(Q_1) \otimes_G L_{\mathbb K}(Q_2)$ and the Leavitt path algebra $L_{\mathbb K}(Q_1\times Q_2)$ where $Q_1\times Q_2$ is not the usual product of quivers but a special kind of product. In fact, this special product $Q_1\times Q_2$ mentioned in \cite{GBG} turns out to be the Kronecker product of quivers $Q_1$ and $Q_2$. In particular, this shows that if $Q$ is a row-finite quiver with no sink, then the cross product algebra $L_{\mathbb K}(Q) \otimes_G L_{\mathbb K}(Q)$ is isomorphic to $L_{\mathbb K}(\widehat{Q})$, where $\widehat{Q}$ is the Kronecker square of $Q$. In \cite[Proposition 3.4]{CW}, the authors realize the face algebra $\mathcal{H}_{\mathbb K}(Q)$ of a finite quiver $Q$ defined by Hayashi in \cite{H} as path algebra $\mathbb K\widehat{Q}$ over Kronecker square $\widehat{Q}$ of $Q$.  
  
 A quiver $Q=(Q_{0}, Q_{1}, s, r)$, which is mostly called directed graph in Leavitt path algebra literature, consists of two sets $Q_{0}$ and $Q_{1}$ together with maps
$s,r:Q_{1}\rightarrow Q_{0}$, called source and range, respectively. The elements of $Q_{0}$ are called
\textit{vertices} and the elements of $Q_{1}$ are called \textit{arrows}. The \textit{path algebra} of $Q$ over a field $\mathbb K$ is the $\mathbb K$-algebra with $\mathbb K$-basis given by all the paths in $Q$ and ring structure determined by path concatenation. We will read paths of $Q$ from left-to-right. For any vertex $i\in Q_0$, we associate a trivial path $e_i$. The path algebra $\mathbb K Q$ is $\mathbb N$-graded by path length, where $(\mathbb KQ)_k=\mathbb K(Q)_k$, with $Q_k$ consisting of paths of length $k\in \mathbb N$.      

Hayashi's face algebra $\mathcal{H}_{\mathbb K}(Q)$ for a finite quiver $Q$ over a field $\mathbb K$ is generated by elements $\{x_{a, b}\}$, for $a, b \in Q_k$ subject to the following relations:

(1) $x_{i, j} x_{u, v}=\delta_{i, u} \delta_{j, v} x_{i, j}$, for each $i, j, u, v\in Q_0$.

(2) $x_{s(p), s(q)} x_{p, q} =x_{p, q} =x_{p, q} x_{r(p), r(q)}$, for each $p, q \in Q_1$. 

(3) $x_{p, q} x_{p', q'} =\delta_{r(p), s(p')} \delta_{r(q), s(q')} x_{pp', qq'}$, for each $p, q, p', q' \in Q_1$.

 $\mathcal{H}_{\mathbb K}(Q)$ is a unital $\mathbb K$-algebra, with unit given by $1=\Sigma_{i, j\in Q_0} x_{i, j}$. Let us denote by $Q_i$, the set of paths of length $i\in \mathbb N$. Clearly $\mathcal{H}_{\mathbb K}(Q)$ has an $\mathbb N$-grading given by $\mathcal{H}_{\mathbb K}(Q)=\oplus_{i\in \mathbb N} (\mathcal{H}_{\mathbb K}(Q))_i$.

Hayashi introduced face algebras as a new class of quantum groups inspired by the quantum inverse scattering method and solvable lattice models of face type. Face algebras also arise in Jones' index theory of subfactors. Face algebras are related to Fusion rules of Wess-Zumino-Witten model in Conformal Field Theory. The class of Face algebras contains all bialgebras and just like bialgebras, face algebras also produce monoidal categories as their (co-)module categories. 

Recently Huang, Walton, Wicks, and Won have shown that the weak bialgebras that coact universally on the path algebra $\mathbb KQ$ (either from the left, from the right, or from both directions compatibly) are each isomorphic to Hayashi's face algebra $\mathcal{H}_{\mathbb K}(Q)$. In \cite[Conjecture 1.6]{HWWW} it is conjectured that if $H$ is a finite-dimensional weak bialgebra over an algebraically closed field $\mathbb K$ with commutative counital subalgebras, then $H$ is isomorphic to a weak bialgebra quotient of face algebra $\mathcal{H}_{\mathbb K}(Q)$ for some finite quiver $Q$. This motivates us to study quotients of face algebra. We define an algebra $\mathcal{A}_{\mathbb K}(Q)$ in Section 5, which is a quotient algebra of face algebra $\mathcal{H}_{\mathbb K}(\overline{Q})$ over a quiver $\overline{Q}$, which is obtained by adding for each arrow in $Q$, another arrow in the opposite direction as well. This algebra $\mathcal{A}_{\mathbb K}(Q)$ turns out to be isomorphic to Leavitt path algebra over a new quiver called the Kronecker square $\widehat{Q}$. As we already mentioned that if $Q$ is a row-finite quiver with no sink, then the cross product algebra $L_{\mathbb K}(Q) \otimes_G L_{\mathbb K}(Q)$ studied in \cite{GBG} is isomorphic to $L_{\mathbb K}(\widehat{Q})$. All this motivates us to study the Leavitt path algebra over Kronecker square of quivers.   

Leavitt path algebras are algebraic analogues of graph C*-algebras and are also natural generalizations of Leavitt algebra of type $($$1,n$$)$ constructed by William Leavitt. 

Given an arbitrary quiver $Q=(Q_{0}, Q_{1}, s, r)$, for each arrow $e\in Q_{1}$, we consider an arrow $e^{\ast}$ in the opposite direction, called the ghost arrow. So, we have $r(e^{\ast})=s(e)$, and $s(e^{\ast})=r(e)$. Let $\mathbb K$ be a field. The \textit{Leavitt path
algebra }$L_{\mathbb K}(Q)$ is defined as the $\mathbb K$-algebra generated by $\{v, e, e^{\ast}: v\in Q_0, e\in Q_1\}$ subject to the following relations:

(1) $vw=\delta_{v, w}v$, for all $v, w\in Q_0$.

(2) $s(e)e=e=er(e)$ for all $e\in Q_{1}$.

(3) $r(e)e^{\ast}=e^{\ast}=e^{\ast}s(e)$\ for all $e\in Q_{1}$.

(4) (The ``CK-1 relations") For all $e,f\in Q_{1}$, $e^{\ast}f=\delta_{e, f} r(e)$.

(5) (The ``CK-2 relations") For every regular vertex (that is, vertex which emits at least one and only finitely many arrows) $v\in Q_{0}$, 
\begin{equation*}
v=\sum_{e\in Q_{1},s(e)=v}ee^{\ast}. 
\end{equation*}

\noindent The Leavitt path algebra $L_{\mathbb K}(Q)$ has the following universal property: if $A$ is any $\mathbb K$-algebra generated by a family of elements $\{a_i, b_p, c_{p^{\ast}}: i\in Q_0, p\in Q_1\}$ satisfying the relations analogous to (1) - (5) above, then there exists a unique $\mathbb K$-algebra homomorphism $\varphi:L_{\mathbb K}(Q)\longrightarrow A$ given by $\varphi(i)=a_i$, $\varphi(p)=b_p$, and $\varphi(p^{\ast})=c_{p^{\ast}}$. 

It can be shown that $L_{\mathbb K}(Q)$ is spanned as a $\mathbb K$-vector space by $\{xy^{\ast}: r(x)=r(y)\}$ where $x$ is a path and $y^*$ is a ghost path. 
Every Leavitt path algebra $L_{\mathbb K}(Q)$ is a $\mathbb{Z} $\textit{-graded
algebra}, namely, $L_{\mathbb K}(Q)={\displaystyle\bigoplus\limits_{n\in\mathbb{Z}}}
L_{n}$ induced by defining, for all $v\in Q_{0}$ and $e\in Q_{1}$, $\deg
(v)=0$, $\deg(e)=1$, $\deg(e^{\ast})=-1$. For each $n\in\mathbb{Z}$, the \textit{homogeneous component }$L_{n}$
is given by
\[
L_{n}=\{ {\textstyle\sum} k_{i}\alpha_{i}\beta_{i}^{\ast}:\text{ }%
|\alpha_{i}|-|\beta_{i}|=n\}.
\]

\noindent For more details on Leavitt path
algebras, we refer the reader to \cite{AAS}. 

\section{Kronecker square of a quiver and its combinatorics}

\noindent  The Kronecker product of quivers was defined by Weichsel in \cite{W}. He describes the Kronecker product of two finite quivers $Q_1$ and $Q_2$ as a quiver $Q_1 \otimes Q_2$ whose adjacency matrix $A_{Q_1\otimes Q_2}$ is the Kronecker product of adjacency matrices $A_{Q_1} $and $A_{Q_2}$. Recall that if $A_{Q_1}=[a_{ij}]_m $ and $A_{Q_2}=[b_{ij}]_n $, then the Kronecker product of these matrices is given by $A_{Q_1\otimes Q_2}= [c_{ij}]_{mn}$ where $c_{ij}= a_{ij} [b_{ij}]$.  The Kronecker product of a quiver $Q$ with itself, $Q \otimes Q$, is called the Kronecker square of $Q$ and denoted as $\widehat Q$.

\noindent The Kronecker product of quivers is extremely useful in network modeling as has been shown in \cite{ML} that using Kronecker product we can generate quivers which can effectively model the structure of real networks. In this section, we list some simple observations about the combinatorial relationship between a quiver and its Kronecker square. Most of these observations can be easily generalized to the more general setting of Kronecker product of two distinct quivers as well.     

\noindent Let $Q=(Q_{0}, Q_{1}, s, r)$ be a quiver. The Kronecker square $\widehat{Q}=(\widehat{Q_{0}}, \widehat{Q_{1}}, \widehat{s}, \widehat{r})$ is given by 

\[\widehat{Q_0}=\{[v_i, v_j]: v_i, v_j \in Q_0\}\]
  
\[\widehat{Q_1}=\{[e_i, e_j]: e_i, e_j \in Q_1\}\]

\[\widehat{s}([e, f])=[s(e), s(f)]\]

\[\widehat{r}([e, f])=[r(e), r(f)]\]

\noindent Any path in $\widehat{Q}$ is of the form $[e_1e_2\cdots e_k, f_1f_2\cdots f_k]:=[e_1, f_1][e_2, f_2] \cdots [e_k, f_k]$. A quiver $Q$ and its Kronecker square $\widehat{Q}$ share many interesting graph-theoretic properties which we list below.

\begin{enumerate}

\item $Q$ is finite if and only if $\widehat{Q}$ is finite. In fact, $|\widehat{Q_0}|=|Q_0|^2$ and $|\widehat{Q_1}|=|Q_1|^2$.

\medskip

\item A path $\mu$ $=e_{1}\dots e_{n}$ in $Q$ is called \textit{closed} if $r(e_{n})=s(e_{1})$. The closed path $\mu$ is called a
\textit{cycle} if it does not pass through any of its vertices twice, that is,
if $s(e_{i})\neq s(e_{j})$ for every $i\neq j$. 

It may be noted that $Q$ is acyclic if and only if $\widehat{Q}$ is acyclic \cite{CW}. This follows from the observation that $Q$ may be identified as a subquiver of $\widehat{Q}$ and if  $[e_1e_2\ldots e_k, f_1 f_2\ldots f_k]$ is a cycle in $\widehat{Q}$, then $e_1e_2\ldots e_k$ and $f_1f_2\ldots f_k$ are both cycles in $Q$ because $[s(e_1), s(f_1)]=\hat{s}([e_1, f_1])=\hat{r}([e_k, f_k])=[r(e_k), r(f_k)]$.

\medskip

\item An \textit{exit }for a path $\mu=e_{1}\dots e_{n}$ is an
arrow $e$ such that $s(e)=s(e_{i})$ for some $i$ and $e\neq e_{i}$. A quiver $Q$
is said to satisfy \textit{Condition $($L$)$} if every cycle in $Q$ has an exit. 

A quiver $Q$ satisfies the Condition (L) if and only if its Kronecker square $\widehat{Q}$ satisfies the Condition (L).

\begin{proof}
Suppose $Q$ satisfies the Condition (L). Let $\widehat{C}=[e_1e_2\ldots e_k, f_1 f_2\ldots f_k]$ be a cycle in $\widehat{Q}$. Then as observed above $e_1e_2\ldots e_k$ and $f_1f_2\ldots f_k$ are closed paths in $Q$. Note that a closed path in $Q$ need not be a cycle, but every closed path traverses at least one cycle. In particular, $e_1e_2\ldots e_k$ contains a cycle $C$ in $Q$. Thus there is an index $i$ such that the edge $e_i$ lies on $C$, and $s(e_i)$ is a vertex of $C$. Since $Q$ satisfies the Condition (L), the cycle $C$ has an exit in $Q$. So there exists an edge $e'$ in $Q$ with $s(e') = s(e_i)$ and $e' \neq e_i$. Now consider the edge $[e', f_i]$ in $\widehat{Q}$. We have
\begin{align*}
    s([e', f_i]) &= [s(e'), s(f_i)] = [s(e_i), s(f_i)]=s([e_i, f_i]), \\
    [e', f_i]  &\neq [e_i, f_i] \quad \text{since } e' \neq e_i.
\end{align*}
Hence $[e', f_i]$ is an exit for $\widehat{C}$. This shows that $\widehat{Q}$ satisfies the Condition (L). 

Conversely, assume that $\widehat{Q}$ satisfies the Condition (L) and let $c=e_1\cdots e_k$ be a cycle in $Q$. Note that $[e_1, e_1][e_2, e_2] \cdots [e_k, e_k]$ is a cycle in $\widehat{Q}$ and as $\widehat{Q}$ satisfies the Condition (L), there exists an arrow $[e', f']$ with $[e', f'] \neq [e_i, f_i]$ and $[s(e'), s(f')]=[s(e_i), s(e_i)]$. This yields an exit $e'$ with $e'\neq e_i$ and $s(e')=s(e_i)$ for $c$ in $Q$. This proves that $Q$ satisfies the Condition (L).   
\end{proof}

\medskip

\item A quiver $Q$ is said to satisfy \textit{Condition $($K$)$}, if any vertex $v$
on a simple closed path $c$ is also the base of another simple closed path $c^{\prime}$ different from $c$. It is not difficult to verify that $Q$ satisfies the Condition (K) if and only if $\widehat{Q}$ satisfies the Condition (K). 

\medskip

\item A vertex $v$ is called a \textit{source} if no arrow ends at it and it is called a \textit{sink} if no arrow begins at it. A quiver $Q$ has no source or sink if and only if its Kronecker square $\widehat{Q}$ has no source or sink.

\begin{proof}
Assume that $Q$ has no source or sink. We claim that $\widehat{Q}$ has no source or sink as well. Note that if $[v, w]$ is a source in $\widehat{Q}$, then there is no arrow from any vertex $[v', w']$ to $[v, w]$ for $v', w'\in Q_0$. This means either there is no arrow from any vertex $v'$ to $v$ or there is no arrow from any vertex $w'$ to $w$. This makes either $v$ a source vertex or $w$ a source vertex. Similarly, if $[v, w]$ is a sink in $\widehat{Q}$, then there is no arrow from $[v, w]$ to any vertex $[v'', w'']$ for $v'', w''\in Q_0$. This means either there is no arrow from vertex $v$ to any vertex $v''$ or there is no arrow from vertex $w$ to any vertex $w''$. This makes either $v$ a sink vertex or $w$ a sink vertex. In both the cases, we have a contradiction. This establishes our claim that $\widehat{Q}$ has no source or sink as well.   

Conversely, assume $\widehat{Q}$ has no source or sink. We claim that then $Q$ has no source or sink. Suppose $v$ is a source in $Q$. Then there is no arrow from any vertex $v'$ to $v$. Clearly, then for any vertex $w$ in $Q_0$, $[v, w]$ is a source in $\widehat{Q}$, a contradiction. Similarly, we can see that if $v$ is a sink in $Q$, then for any vertex $w$ in $Q_0$, $[v, w]$ is a sink in $\widehat{Q}$, a contradiction again. This proves that $Q$ has no source or sink as well.          
\end{proof}

\end{enumerate}

\section{Leavitt path algebra over Kronecker square}

\noindent Let $Q=(Q_{0}, Q_{1}, s, r)$ be a quiver and its Kronecker square be $\widehat{Q}=(\widehat{Q_{0}}, \widehat{Q_{1}}, \widehat{s}, \widehat{r})$. Let $\mathbb K$ be any field. The Leavitt path algebra $L_{\mathbb K}(\widehat{Q})$ is a $\mathbb K$-algebra generated by $\{[v_i, v_j], [e_i, e_j], [e_i, e_j]^{\ast}: v_i\in Q_0, e_i \in Q_1\}$ subject to the following relations:

\begin{enumerate}

\item $[v_i, v_j][v_{i'}, v_{j'}] =\delta_{i, i'} \delta_{j, j'} [v_i, v_j]$ for each $v_i, v_j, v_{i'}, v_{j'}\in Q_0$.

\smallskip

\item $[s(e), s(f)][e, f]=[e, f]$ and $[e, f][r(e), r(f)]=[e, f]$ for each $e, f\in Q_1$.

\smallskip

\item $[e_i, e_j]^{\ast} [e_{i'}, e_{j'}]=\delta_{i, i'} \delta_{j, j'} [r(e_i), r(e_j)]$ for each $e_i, e_j, e_{i'}, e_{j'}\in Q_1$.
 
 \smallskip
 
\item If $v_i, v_j$ are both vertices that emit at least one and only finitely many arrows, then $[v_i, v_j]= \Sigma_{v_i=s(e_i), v_j=s(e_j)}[e_i, e_j] [e_i, e_j]^{\ast}$. 

\end{enumerate}

\begin{example} \rm

Let $Q$ be the Dynkin diagram $A_2$. That is,

\[
\xymatrix{ && v_1\ar@{->}[rr]^e && v_2}
\]

Then it is known that $L_{\mathbb K}(Q)\cong M_2(\mathbb K)$. Now, in this case the Kronecker quiver $\widehat{Q}$ is given as follows;

\[
\xymatrix{[v_1, v_1] \ar@{->}[drr]^{[e, e]} && [v_1, v_2]  \\
[v_2, v_1] && [v_2, v_2]}
\]

It is not difficult to verify that $L_{\mathbb K}(\widehat{Q}) \cong M_2(\mathbb K) \times \mathbb K^2$. 

\end{example}

\begin{example} \rm

For any integer $n\ge 2$, we let $R_n$ denote the rose quiver with one vertex and $n$ loops:
$$R_n = \xymatrix{ & {\bullet^v} \ar@(ur,dr)^{e_1}  \ar@(u,r)^{e_2} \ar@(ul,ur)^{e_3}  \ar@{.} @(l,u) \ar@{.} @(dr,dl)
	\ar@(r,d)^{e_n}  \ar@{}[l] ^{\hdots} } \ \ $$ Then $L_{\mathbb K}(R_n)$ 	is defined to be the $\mathbb K$-algebra generated by $v$, $e_1, \ldots, e_n$, $e^*_1, \ldots, e^*_n$, satisfying the following relations 
\begin{center}
	$v^2 =v, ve_i = e_i= e_iv$, $ve^*_i = e^*_i = e^*_i v$, $e^*_i e_j = \delta_{i,j} v$ and $\sum^n_{i=1}e_ie^*_i =v$	
\end{center}
for all $1\le i, j\le n$.  

The \textit{Leavitt $\mathbb K$-algebra of type} $(1;n)$, denoted by $L_{\mathbb K}(1, n)$,  is the free associative $\mathbb K$-algebra on the $2n$ variables $x_1, \hdots, x_n, y_1, \hdots, y_n$ subject to the relations $ \sum^{n}_{i=1}x_iy_i =1$ and $y_ix_j = \delta_{i,j}1 \, (1\le i, j\le n)$. Clearly, $L_{\mathbb K}(1, n)\cong L_{\mathbb K}(R_n)$ as $\mathbb K$-algebras, by the mapping: $1\longmapsto v$, $x_i\longmapsto e_i$	and $y_i\longmapsto e^*_i$ for all $1\le i\le n$.   

Note that the Kronecker square $\widehat{R_n}$ is another rose quiver with one vertex $[v, v]$ and $n^2$ loops $[e_i, e_j]$, $1\le i, j\le n$. We have $L_{\mathbb K}(\widehat{R_n}) \cong L_{\mathbb K}(R_{n^2}) \cong L_{\mathbb K}(1, n^2)$ under the map $$\varphi: L_{\mathbb K}(\widehat{R_n}) \rightarrow L_{\mathbb K}(1, n^2)$$ defined as

\medskip

 $\varphi([v, v])=1$; $\varphi([e_i, e_j])=x_i$ if $i=j$; $\varphi([e_i, e_j])=x_{(n-i+j)n+i}$ if $i>j$; and $\varphi([e_i, e_j])=x_{(j-i)n+i}$ if $i<j$, and

\medskip

$\varphi([e_i, e_j]^{\ast})=y_i$ if $i=j$; $\varphi([e_i, e_j]^{\ast})=y_{(n-i+j)n+i}$ if $i>j$; and $\varphi([e_i, e_j]^{\ast})=y_{(j-i)n+i}$ if $i<j$.       

\end{example}

\section{Cross product of Leavitt path algebras and Leavitt path algebra over Kronecker square}

\noindent Corrales Garc\'{i}a et al in \cite{GBG} study the cross product of Leavitt path algebras using gauge action. For a field $\mathbb K$, let $R=\mathbb K[x, x^{-1}]$ be the Laurent polynomial $\mathbb K$-algebra and $G=U(R)$ be the group of invertible elements. For any $\mathbb K$-algebra $A$, denote by $A_R$ the extension by scalars. Each $\mathbb Z$-grading on a $\mathbb K$-algebra $A$ comes from a group homomorphism $\rho: G\rightarrow \Aut(A_R)$ such that the homogeneous elements of degree $n$ are precisely $x\in A$ such that $\rho(z)(x\otimes 1)=x\otimes z^n$ for any $z\in G$. Then instead of giving a $\mathbb Z$-grading on $A$, we may consider a group homomorphism $\rho: G\rightarrow \Aut(A_R)$. Thus for two $\mathbb Z$-graded $\mathbb K$-algebras $A$ and $B$ with associated homomorphisms $\rho:G\rightarrow \Aut(A_R)$ and $\sigma: G\rightarrow \Aut(B_R)$, we can construct a homomorphism $\rho \otimes \sigma: G\rightarrow \Aut((A\otimes B)_R)$. The cross product algebra $A\otimes_G B$ is the fixed subalgebra relative to this homomorphism $\rho \otimes \sigma$. Thus the cross product algebra $L_{\mathbb K}(Q_1) \otimes_G L_{\mathbb K}(Q_2)$ is the $\mathbb Z$-graded subalgebra of $L_{\mathbb K}(Q_1) \otimes_{\mathbb K} L_{\mathbb K}(Q_2)$ whose homogeneous component of degree $n$ is the linear span of the elements $x_n\otimes y_n$ where $x_n\in L_{\mathbb K}(Q_1)$ and $y_n\in L_{\mathbb K}(Q_2)$ are homogeneous of degree $n$.

\begin{theorem} \cite[Theorem 7]{GBG} 
Let $Q$ be a row-finite quiver with no sink. Then the cross product algebra $L_{\mathbb K}(Q) \otimes_G L_{\mathbb K}(Q)$ is isomorphic to $L_{\mathbb K}(\widehat{Q})$.
\end{theorem}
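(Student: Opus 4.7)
The plan is to build an explicit isomorphism via the universal property of $L_{\mathbb K}(\widehat Q)$. I would define $\varphi$ on generators by $[v_i,v_j]\mapsto v_i\otimes v_j$, $[e_i,e_j]\mapsto e_i\otimes e_j$, and $[e_i^{*},e_j^{*}]\mapsto e_i^{*}\otimes e_j^{*}$, and then check the four defining relations of $L_{\mathbb K}(\widehat Q)$ factor-wise in $L_{\mathbb K}(Q)\otimes_{\mathbb K} L_{\mathbb K}(Q)$. Vertex orthogonality and the source/range relations are immediate. The CK-1 relation becomes $(e_i^{*}e_{i'})\otimes(e_j^{*}e_{j'})=\delta_{i,i'}\delta_{j,j'}\,r(e_i)\otimes r(e_j)$, and the CK-2 relation at $[v_i,v_j]$ becomes $\bigl(\sum_{s(e_i)=v_i}e_ie_i^{*}\bigr)\otimes\bigl(\sum_{s(e_j)=v_j}e_je_j^{*}\bigr)=v_i\otimes v_j$; the latter requires both $v_i$ and $v_j$ to be regular, which is precisely where the row-finite, no-sink hypothesis enters. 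By universality $\varphi$ extends to a $\mathbb K$-algebra homomorphism, and since each generator lands on a pure tensor with matching-degree factors, the image sits inside $L_{\mathbb K}(Q)\otimes_G L_{\mathbb K}(Q)$.

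For injectivity I would apply the Graded Uniqueness Theorem. Equipping the cross product with its natural grading in which $e\otimes f$ has degree $1$ and $e^{*}\otimes f^{*}$ has degree $-1$ makes $\varphi$ degree-preserving, and each vertex $[v_i,v_j]$ is sent to the nonzero idempotent $v_i\otimes v_j$; injectivity follows at once.

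The substantive step is surjectivity. The cross product is spanned by monomials $\alpha\beta^{*}\otimes\gamma\delta^{*}$ with $|\alpha|-|\beta|=|\gamma|-|\delta|$, $r(\alpha)=r(\beta)$, and $r(\gamma)=r(\delta)$. When additionally $|\alpha|=|\gamma|$ and $|\beta|=|\delta|$, writing $\alpha=e_1\cdots e_k$, $\gamma=f_1\cdots f_k$, $\beta=g_1\cdots g_m$, $\delta=h_1\cdots h_m$, the monomial is visibly $\varphi\bigl([e_1,f_1]\cdots[e_k,f_k][g_m^{*},h_m^{*}]\cdots[g_1^{*},h_1^{*}]\bigr)$; this is a legitimate word in $L_{\mathbb K}(\widehat Q)$ because coordinate-wise range/source matching in $Q$ produces the analogous matching in $\widehat Q$. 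To reduce the general case to this equal-length one, I would use that every vertex of $Q$ is regular (by row-finite, no-sink): if $|\alpha|<|\gamma|$ then $|\beta|<|\delta|$, and CK-2 at $r(\alpha)$ gives $\alpha\beta^{*}=\sum_{s(f)=r(\alpha)}(\alpha f)(\beta f)^{*}$, extending the paths in the left factor by one without changing the element. Iterating independently on each tensor factor equalizes all four lengths. The main obstacle is this length-normalization procedure; the reason it goes through cleanly is that CK-2 is degree-preserving and the gauge action is diagonal, so the expansions on the two tensor factors do not interact and the resulting sums remain inside the cross product, but the combinatorial bookkeeping should be spelled out carefully.
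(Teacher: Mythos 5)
Your argument is correct, and in fact the paper does not prove this theorem itself --- it is quoted verbatim from \cite[Theorem 7]{GBG} --- so there is no in-paper proof to compare against; your proof follows the same standard strategy as the cited source (universal property to get $\varphi$, Graded Uniqueness for injectivity, and CK-2 length-equalization at regular vertices for surjectivity). You correctly identify the two places where the row-finite/no-sink hypothesis is essential, namely that every $[v_i,v_j]$ is regular so the CK-2 relation maps to $v_i\otimes v_j$, and that $r(\alpha)$ is regular so the expansion $\alpha\beta^{*}=\sum_{s(f)=r(\alpha)}(\alpha f)(\beta f)^{*}$ is a finite sum; the remaining bookkeeping you defer is routine.
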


\noindent In \cite{GBG} it is shown that the assumption of ``no sink" cannot be dropped in showing that $L_{\mathbb K}(Q_1) \otimes_G L_{\mathbb K}(Q_2) \cong L_{\mathbb K}(Q_1\otimes Q_2)$, where $Q_1\otimes Q_2$ is the Kronecker product. So, we proceed to consider the following question for the case when $Q_1$ and $Q_2$ are the same. 

\begin{question}\label{Q1}
For a finite quiver $Q$, is the cross product algebra $L_{\mathbb K}(Q) \otimes_G L_{\mathbb K}(Q)$ isomorphic to $L_{\mathbb K}(\widehat{Q})$?
\end{question}

\subsection{Some positive results}

\noindent Let $Q$ be a finite or infinite-line quiver. So $Q_0=\{e_i\}_{i\in I}$ with $I$, a subset of $\N\setminus\{0\}$ and there is only one edge from each $e_i$ to $e_{i+1}$. We can consider $\mathbb KQ$ to be the linear span of the $e_i$'s and define the
elements of the dual $\mathbb KQ^*$ given by
$e^i(e_j):=\delta_{i,j}$. Besides we consider the linear span of the tensors $e_i\otimes e^j\in \mathbb KQ\otimes (\mathbb KQ)^*$. Then $L_{\mathbb K}(Q)$ is (isomorphic to) the linear span of these elements
$$L_{\mathbb K}(Q)\cong\Span\{e_i\otimes e^j\colon i,j\in I\}.$$
Notice that multiplication of these tensors is given by the contractions
$(e_i\otimes e^j)(e_k\otimes e^l):=\delta_{j,k} e_i\otimes e^l$ which mimics the matrix multiplication. So, tensors $e_i\otimes e^i$ are idempotents, and any two of them are orthogonal. Of course, this algebra is isomorphic to $M_n(\mathbb K)$ where $n=\vert I\vert$ (so it could be $M_\infty(\mathbb K)$ when $I$ is infinite). Moreover, this algebra is
$\Z$-graded by declaring $e_i\otimes e^j$ to be of degree $\partial(e_i\otimes e^j):=j-i$. The vertices of $Q$ can be identified with the idempotents $e_i\otimes e^i$ and the edge from $e_i$ to $e_{i+1}$ with the tensor $e_i\otimes e^{i+1}$.

If $Q$ is a disjoint union of line quivers, or equivalently, if the connected components of $Q$ are line quivers $Q^a$ ($a\in A$), then we know that $L_{\mathbb K}(Q)=\oplus_{a\in A}L_{\mathbb K}(Q^a)$
and each $L_{\mathbb K}(Q^a)$ is isomorphic to the linear span of elements $e_i\otimes e^j$
with $i,j$ ranging in a suitable set of indices $I^a$. In this case $(e_i\otimes e^j)(e_k\otimes e^l)=0$ in case $i,j\in I^a$ and $k,l\in I^b$ with $a\ne b$.
Moreover, the degree map of $L_{\mathbb K}(Q)$ restricted to each summand $L_{\mathbb K}(Q^a)$ is
just the degree map of $L_{\mathbb K}(Q^a)$.

\begin{proposition}\label{gradediso}
    If $Q$ and $Q'$ are line quivers (finite or infinite), then we have a graded isomorphism $L_K(Q\otimes Q')\cong L_K(Q)\otimes_G L_K(Q')$.
\end{proposition}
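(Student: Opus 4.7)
The plan is to exploit the explicit matrix-unit model for $L_K$ of a line quiver that was set up immediately before the proposition, together with the observation that the Kronecker product of two line quivers is again a disjoint union of line quivers.

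First, let $Q$ have vertex set $\{v_i\}_{i\in I}$ and $Q'$ have vertex set $\{w_j\}_{j\in J}$, where $I, J \subseteq \N\setminus\{0\}$ are intervals. The Kronecker product $Q \otimes Q'$ has vertices $\{[v_i, w_j]\}$ and the only arrows are $[v_i, w_j] \to [v_{i+1}, w_{j+1}]$, so its connected components are themselves line quivers indexed by the diagonal offset $c := j - i$; call these $Q^{(c)}$. Consequently
\[
L_K(Q \otimes Q') = \bigoplus_{c} L_K(Q^{(c)}),
\]
and by the matrix-unit description each $L_K(Q^{(c)})$ has $K$-basis $\{[v_i, w_{i+c}] \otimes [v_j, w_{j+c}]^*\}$ with multiplication given by Kronecker-delta contraction and with degree $j - i$.

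Next, the cross product $L_K(Q) \otimes_G L_K(Q')$ has as its degree-$d$ homogeneous component the span of $(v_i \otimes v^j) \otimes (w_k \otimes w^l)$ with $j - i = l - k = d$. Reparameterizing by $c := k - i$ forces $l = j + c$, so a $K$-basis for the entire cross product is $\{(v_i \otimes v^j) \otimes (w_{i+c} \otimes w^{j+c})\}$, indexed by exactly the same admissible triples $(i, j, c)$ as before. I would then define
\[
\varphi\colon L_K(Q \otimes Q') \longrightarrow L_K(Q) \otimes_G L_K(Q'), \qquad [v_i, w_{i+c}] \otimes [v_j, w_{j+c}]^* \longmapsto (v_i \otimes v^j) \otimes (w_{i+c} \otimes w^{j+c}),
\]
and extend $K$-linearly. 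By construction $\varphi$ is a $K$-linear bijection that preserves degree, since both sides assign the basis vector indexed by $(i,j,c)$ the degree $j-i$.

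Finally, I would verify multiplicativity by direct computation. On the Kronecker side, the product of two basis elements is nonzero precisely when their indices satisfy $j=i'$ and $j+c=i'+c'$ (forcing $c=c'$, i.e.\ both factors live in the same component $Q^{(c)}$), and then equals $[v_i, w_{i+c}] \otimes [v_{j'}, w_{j'+c}]^*$. On the cross-product side the tensor factorizes slot-wise, and the contraction rule $(v_i \otimes v^j)(v_{i'} \otimes v^{j'}) = \delta_{j,i'}(v_i \otimes v^{j'})$ produces exactly the same pair of Kronecker deltas $\delta_{j,i'}\delta_{j+c,i'+c'}$ and the same resulting image. I expect the main obstacle to be purely bookkeeping: matching the single parameter $c$ on the Kronecker side with the apparent two-parameter family $(k-i,\,l-j)$ on the cross-product side, which is pinned down automatically by the degree constraint $j - i = l - k$ built into the definition of $\otimes_G$; and checking that products of basis vectors from different components $Q^{(c)}$, $Q^{(c')}$ with $c\ne c'$ vanish on both sides, which is immediate from the same Kronecker deltas.
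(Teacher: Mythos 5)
Your proposal is correct and follows essentially the same route as the paper: the same decomposition of $Q\otimes Q'$ into line-quiver components indexed by the diagonal offset, the same matrix-unit model, and the same component-wise map $[v_i,w_{i+c}]\otimes[v_j,w_{j+c}]^*\mapsto (v_i\otimes v^j)\otimes(w_{i+c}\otimes w^{j+c})$. The only cosmetic difference is that you establish bijectivity by matching bases on both sides, whereas the paper constructs an explicit linear retraction $\Phi$ induced from a bilinear map to prove injectivity; both verifications are valid.
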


\begin{proof}
Let $Q$ and $Q'$ be line quivers (finite or not) with $Q_0=\{u_i\}_{i\in I}$ and $Q'_0=\{v_j\}_{j\in J}$, then $(Q\otimes Q')_0=\{[u_i,v_j]\colon i\in I, j\in J\}$. Notice that $I,J\subset\N\setminus\{0\}$. 
Note that along one connected component of $Q\otimes Q'$ the vertices of this component are of the form $[u_i,v_j]$ with $j-i$ taking the same value all through the component. Thus, we can label the different connected components with this value so that
  $Q\otimes Q'=\sqcup_{a \in A}(Q \otimes Q')^a$, where for each $a \in A, \ (Q \otimes Q')^a$ is a connected component in $Q\otimes Q'$ and all the vertices $[u_i,v_j] \in (Q \otimes Q')^a$ satisfy that $j-i$ is constant. 

Then $L_{\mathbb K}((Q\otimes Q')^a)$ is the linear span of the tensors $[u_i,v_j]\otimes [u^k,v^l]$ with $j-i=l-k=a$. Of course, we have 

$$\begin{cases}L_{\mathbb K}((Q\otimes Q')^a)L_{\mathbb K}((Q\otimes Q')^b)=0, & (a\ne b)\\
L_{\mathbb K}((Q\otimes Q')^a)^2\subset L_{\mathbb K}((Q\otimes Q')^a). &\end{cases}$$
Now we define a graded homomorphism
$$\Omega_a\colon L_{\mathbb K}((Q\otimes Q')^a)\to L_{\mathbb K}(Q)\otimes_G L_{\mathbb K}(Q')$$ such that 
$$[u_i,v_j]\otimes[u^k,v^l]\mapsto (u_i\otimes u^k)\times(v_j\otimes v^l).$$
Note that since $j-i=l-k$ we also have 
$k-i=l-j$ which implies $(u_i\otimes u^k)\otimes(v_j\otimes v^l)\in L_{\mathbb K}(Q)\otimes_G L_{\mathbb K}(Q')$. 

It is easy to check that $\Omega_a$ is a  homomorphism of $\mathbb K$-algebras. Furthermore, each element of
$L_{\mathbb K}(Q)\otimes_G L_{\mathbb K}(Q')$ is in the image of a suitable $\Omega_a$. So, we have an
epimorphism $$\Omega\colon L_{\mathbb K}(Q\otimes Q')\to L_{\mathbb K}(Q)\otimes_G L_{\mathbb K}(Q')$$ whose
restriction to $L_{\mathbb K}((Q\otimes Q')^a)$ is $\Omega_a$. We need to check that $\Omega$ is a monomorphism. 

For this, consider the $\mathbb K$-bilinear map $$\Psi\colon L_{\mathbb K}(Q)\times L_{\mathbb K}(Q')\to L_{\mathbb K}(Q\otimes Q')$$ such that 
$\Psi(u_i\otimes u^j,v_k\otimes v^l)=[u_i,v_k]\otimes[u^j,v^l]$. 

This induces a $\mathbb K$-linear map 
$$\Phi\colon L_{\mathbb K}(Q)\otimes L_{\mathbb K}(Q')\to L_{\mathbb K}(Q\otimes Q')$$ such that 
$(u_i\otimes u^j)\otimes(v_k\otimes v^l)\mapsto [u_i,v_k]\otimes[u^j,v^l]$. 

The restriction of this map to $L_{\mathbb K}(Q)\otimes_G L_{\mathbb K}(Q')$ satisfies
$$\Phi\Omega_a([u_i,v_j]\otimes[u^k,v^l])=
\Phi((u_i\otimes u^k)\otimes(v_j\otimes v^l))=[u_i,v_j]\otimes[u^k,v^l].$$ This shows $\Omega$ is a monomorphism. Thus we have $L_{\mathbb K}(Q\otimes Q')\cong L_{\mathbb K}(Q)\otimes_G L_{\mathbb K}(Q')$.
\end{proof}

In particular, we have 
\begin{corollary}
If $Q$ is finite or infinite line quiver, then we have a graded isomorphism $L_{\mathbb K}(\widehat{Q})\cong L_{\mathbb K}(Q)\otimes_G L_{\mathbb K}(Q)$.
\end{corollary}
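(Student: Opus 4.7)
The proof is immediate from the preceding proposition. The plan is simply to specialize Proposition \ref{gradediso} to the case $Q' = Q$: since the Kronecker square $\widehat{Q}$ is by definition $Q \otimes Q$, and since the hypothesis of the proposition — that both factors be (finite or infinite) line quivers — is automatically met when one takes both factors equal to $Q$, one obtains the graded isomorphism
\[
L_{\mathbb K}(\widehat{Q}) \;=\; L_{\mathbb K}(Q \otimes Q) \;\cong\; L_{\mathbb K}(Q) \otimes_G L_{\mathbb K}(Q).
\]

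Since the corollary is a direct specialization, there is no real obstacle; all the substantive work has already been carried out in the proof of Proposition \ref{gradediso}, namely the decomposition of $Q \otimes Q'$ into connected components indexed by the common difference $j - i = l - k$, the construction of the map $\Omega$ sending $[u_i,v_j]\otimes [u^k,v^l]$ to $(u_i\otimes u^k)\times (v_j\otimes v^l)$, and the construction of the inverse $\Phi$ induced by $\Psi(u_i\otimes u^j,v_k\otimes v^l)=[u_i,v_k]\otimes[u^j,v^l]$. In particular, the grading is automatically preserved because $\Omega_a$ is defined on the component where $j-i=l-k=a$, which matches the degree in the cross product. No additional verification is required beyond invoking Proposition \ref{gradediso} with $Q'=Q$.
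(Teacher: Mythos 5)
Your proposal is correct and matches the paper exactly: the paper presents this corollary with no separate proof, introducing it with ``In particular, we have,'' i.e., as the specialization $Q'=Q$ of Proposition \ref{gradediso}, which is precisely what you do. Nothing further is needed.
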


\subsection{A counterexample for locally finite Leavitt path algebras }

\noindent A $\mathbb{Z}$-graded ${\mathbb K}$-algebra $A=\oplus_{n \in \mathbb{Z}} A_n$
is \textit{locally finite} if $\dim_{\mathbb K}(A_n)$ is finite for all $n \in \mathbb{Z}$. Recall that the Leavitt path algebra $L_{\mathbb K}(Q)$ is locally finite if and only if $Q$ is finite and no cycle in $Q$ has an exit. Consider the following quiver:

\[
\hbox{$\begin{tikzcd}[row sep=small, column sep= small]E:
	&& \bullet^{u} \\
	\bullet^{v} \\
	&& \bullet^{w}
	\arrow[from=2-1, to=1-3, "f"]
	\arrow[from=2-1, to=3-3, "g"']
	\arrow[from=3-3, to=3-3, loop, in=325, out=35, distance=10mm, "h"]
\end{tikzcd}$\quad  with adjacency matrix $A_E=\begin{pmatrix}0 & 1 & 1\\0 & 0 & 0\\ 0 & 0 & 1\end{pmatrix}$}.
\]
Then the Kronecker square  $\widehat{E}$ is as follows:

\bigskip 
\[\begin{tikzcd}
	& \bullet^{w_1} &&& \bullet^{w_3} & \bullet^{w_4} \\
	\bullet^{w_5} & \bullet^{w_6} & \bullet^{w_7} & \bullet^{w_8} && \bullet^{w_9} \\
	&& \bullet^{w_2}
	\arrow[from=1-2, to=2-1, "f_1"']
	\arrow[from=1-2, to=2-2, "f_2"']
	\arrow[from=1-2, to=2-3, "f_3"']
	\arrow[from=1-2, to=2-4, "f_4"]
	\arrow[from=1-5, to=2-3, "f_7"']
	\arrow[from=1-5, to=2-4, "f_8"]
	\arrow[from=2-4, to=2-4, loop, in=280, out=350, distance=10mm, "f_9"]
	\arrow[from=3-3, to=2-2, "f_5"]
	\arrow[from=3-3, to=2-4, "f_6"']
\end{tikzcd}\]

whose adjacency matrix is 
$$\small
\left(
\begin{array}{ccccccccc}
 0 & 0 & 0 & 0 & 1 & 1 & 1 & 1 & 0 \\
 0 & 0 & 0 & 0 & 0 & 1 & 0 & 1 & 0 \\
 0 & 0 & 0 & 0 & 0 & 0 & 1 & 1 & 0 \\
 0 & 0 & 0 & 0 & 0 & 0 & 0 & 0 & 0 \\
 0 & 0 & 0 & 0 & 0 & 0 & 0 & 0 & 0 \\
 0 & 0 & 0 & 0 & 0 & 0 & 0 & 0 & 0 \\
 0 & 0 & 0 & 0 & 0 & 0 & 0 & 0 & 0 \\
 0 & 0 & 0 & 0 & 0 & 0 & 0 & 1 & 0 \\
 0 & 0 & 0 & 0 & 0 & 0 & 0 & 0 & 0 \\
\end{array}
\right).
$$
Now, we can compute the Grothendieck group $K_0(L_{\mathbb K}(\widehat{E}))$. Let us first recall the basics of Grothendieck group. For any ring $R$, let $\nu(R)$ denote the monoid of isomorphism classes of finitely generated right projective modules over $R$ with direct sum as the addition operation. For a finitely generated projective $R$-module $P$, we denote the isomorphism class of $P$ by $[P]$. For $[P], [Q] \in \nu(R)$, we have $[P]+[Q]=[P\oplus Q]$. The Grothendieck group, $K_0(R)$, is defined as the group completion of the monoid $\nu(R)$. It is known that if $A$ and $B$ are isomorphic algebras then $K_0(A)$ and $K_0(B)$ are isomorphic as groups. 

We use software to compute the $K_0$ group from the adjacency matrix, namely the one used in \cite{Atlas}, and the Grothendieck group $K_0(L_{\mathbb K}(\widehat{E}))$ turns out to be $\Z^6$.

By using out-split we can get a similar example. We can consider the out-split graph constructed from $E$ following \cite[Definition 2.6]{ABRAMS20081983} for the partition $s^{-1}(v)=\varepsilon_v^{1}\sqcup\varepsilon_v^{2}$ with $\varepsilon_v^{1}=\{ f\}$, $\varepsilon_v^{2}=\{ g\}$ and $s^{-1}(w)=\varepsilon_w^{1}=\{ h\}$. So, we have $m(v)=2, m(u)=0$ and $m(w)=1$. Furthermore,  $E_s({\mathcal P})^0=\{v_1, v_2, w_1, u\}$ and $E_s({\mathcal P})^1=\{f, g_1, h_1\}$ with the resulting out-split graph $E'$:

\[\begin{tikzcd}E':  
	\bullet^{v_1} && \bullet^{u} \\
	\bullet^{v_2} && \bullet^{w_1}
	\arrow[from=1-1, to=1-3, "f"]
	\arrow[from=2-1, to=2-3, "g_1"]
	\arrow[from=2-3, to=2-3, loop, in=325, out=35, distance=10mm, "h_1"]
\end{tikzcd}\]
By \cite[Theorem 2.8]{ABRAMS20081983} $L_K(E)$ is graded isomorphic to $L_K(E')$. The Kronecker square $\widehat{E'}$ is as follows:

\[\begin{tikzcd}[column sep=small] \widehat{E'}:
	\bullet^{u_1} & \bullet^{u_2} & \bullet^{u_3} & \bullet^{u_5} & \bullet^{u_6} & \bullet^{u_8} & \bullet^{u_9} & \bullet^{u_{11}} & \bullet^{u_{13}} & \bullet^{u_{14}} \\
	& \bullet^{u_4} &&& \bullet^{u_7} && \bullet^{u_{10}} & \bullet^{u_{12}} & \bullet^{u_{15}} & \bullet^{u_{16}}
	\arrow[from=1-1, to=2-2, "f_1"']
	\arrow[from=1-2, to=2-2,"f_2"]
	\arrow[from=1-3, to=2-2, "f_3"]
	\arrow[from=1-4, to=2-5, "f_5"]
	\arrow[from=1-5, to=2-5, "f_6"]
	\arrow[from=1-6, to=2-7, " f_7"]
	\arrow[from=1-7, to=2-7, "f_8"]
	\arrow[from=1-8, to=2-8, " f_9"]
	\arrow[from=2-2, to=2-2, loop, in=235, out=305, distance=10mm, "f_4"]
\end{tikzcd}\]
whose adjacency matrix for a convenient reorganization of the vertices is   
$$\small\left(
\begin{array}{cccccccccccccccc}
 0 & 0 & 0 & 0 & 0 & 1 & 0 & 0 & 0 & 0 & 0 & 0 & 0 & 0 & 0 & 0 \\
 0 & 0 & 0 & 0 & 0 & 0 & 0 & 0 & 0 & 0 & 0 & 0 & 0 & 0 & 0 & 0 \\
 0 & 0 & 0 & 0 & 0 & 0 & 0 & 1 & 0 & 0 & 0 & 0 & 0 & 0 & 0 & 0 \\
 0 & 0 & 0 & 0 & 0 & 0 & 0 & 1 & 0 & 0 & 0 & 0 & 0 & 0 & 0 & 0 \\
 0 & 0 & 0 & 0 & 0 & 0 & 0 & 0 & 0 & 0 & 0 & 0 & 0 & 0 & 0 & 0 \\
 0 & 0 & 0 & 0 & 0 & 0 & 0 & 0 & 0 & 0 & 0 & 0 & 0 & 0 & 0 & 0 \\
 0 & 0 & 0 & 0 & 0 & 0 & 0 & 0 & 0 & 0 & 0 & 0 & 0 & 0 & 0 & 0 \\
 0 & 0 & 0 & 0 & 0 & 0 & 0 & 0 & 0 & 0 & 0 & 0 & 0 & 0 & 0 & 0 \\
 0 & 0 & 0 & 0 & 0 & 0 & 0 & 0 & 0 & 0 & 0 & 0 & 0 & 1 & 0 & 0 \\
 0 & 0 & 0 & 0 & 0 & 0 & 0 & 0 & 0 & 0 & 0 & 0 & 0 & 0 & 0 & 0 \\
 0 & 0 & 0 & 0 & 0 & 0 & 0 & 0 & 0 & 0 & 0 & 0 & 0 & 0 & 0 & 1 \\
 0 & 0 & 0 & 0 & 0 & 0 & 0 & 0 & 0 & 0 & 0 & 0 & 0 & 0 & 0 & 1 \\
 0 & 0 & 0 & 0 & 0 & 0 & 0 & 0 & 0 & 0 & 0 & 0 & 0 & 1 & 0 & 0 \\
 0 & 0 & 0 & 0 & 0 & 0 & 0 & 0 & 0 & 0 & 0 & 0 & 0 & 0 & 0 & 0 \\
 0 & 0 & 0 & 0 & 0 & 0 & 0 & 0 & 0 & 0 & 0 & 0 & 0 & 0 & 0 & 1 \\
 0 & 0 & 0 & 0 & 0 & 0 & 0 & 0 & 0 & 0 & 0 & 0 & 0 & 0 & 0 & 1 \\
\end{array}
\right).$$

The $K_0$ group of $L_{\mathbb K}(\widehat{E'})$ turns out to be $\Z^8$.
Observe that both Leavitt path algebras $L_{\mathbb K}(E)$ and $L_{\mathbb K}(E')$ are locally finite algebras and since 
\begin{equation}\label{K0}
\begin{cases}
    K_0(L_{\mathbb K}(\widehat E))=\Z^6 \text{ and}\\
    K_0(L_{\mathbb K}(\widehat E'))=\Z^8,
\end{cases}
\end{equation}
we have $$L_{\mathbb K}(\widehat{E})\not\cong L_{\mathbb K}(\widehat{E'}).$$
But since $L_{\mathbb K}(E)\cong_{\text{gr}} L_{\mathbb K}(E')$
we have $L_{\mathbb K}(E)\otimes_G L_{\mathbb K}(E)\cong_{\text{gr}}L_{\mathbb K}(E')\otimes_G L_{\mathbb K}(E')$ which implies that 
either $L_{\mathbb K}(\widehat E)\not\cong L_{\mathbb K}(E)\otimes_G L_{\mathbb K}(E)$ or $L_{\mathbb K}(\widehat E')\not\cong L_{\mathbb K}(E')\otimes_G L_{\mathbb K}(E')$. So, we deduce that for locally finite Leavitt path algebras the Question \ref{Q1} has a negative answer.

To go a little deeper into the details, we can see that 
$$L_{\mathbb K}(E)_0=\langle u, w\rangle + \langle ff^*, gg^*, gh^*, hg^*\rangle,$$ so $\dim(L_{\mathbb K}(E)_0)=6$ and $\dim((L_{\mathbb K}(E)\otimes_G L_{\mathbb K}(E))_0)=\dim(L_{\mathbb K}(E)_0\otimes_G L_{\mathbb K}(E)_0)=36$. If  we consider the sets $D_1=\{f_1f_1^*, f_2f_2^*, f_2f_5^*, f_3f_3^*, f_3f_7^*, f_4f_6^*, f_4f_8^*, f_4f_9^*\}$ and $$D_2=\{ f_5f_2^*, f_6f_6^*, f_6f_4^*, f_6f_8^*, f_6f_9^*, f_7f_7^*, f_7f_3^*, f_8f_4^*, f_8f_6^*,  f_8f_9^*,f_9f_4^*, f_9f_6^*, f_9f_8^*\},$$ then $L_{\mathbb K}(\widehat{E})_0=\langle \{w_i\}_{i=1}^9 \cup D_1 \cup D_2 \rangle$ and $\dim(L_{\mathbb K}(\widehat{E})_0)=30$. Therefore, $L_{\mathbb K}(\widehat{E})\not\cong_{\text{gr}} L_{\mathbb K}(E)\otimes_G L_{\mathbb K}(E)$ in this case.
\newline
Also, we can observe that $\dim(L_{\mathbb K}(\widehat{E'})_0)=32$, since $$L_{\mathbb K}(\widehat{E'})_0=\langle \{u_i\}_{i=1}^{16} \cup \{f_if_j^*\}_{(i,j) \in S\cup S' }\rangle,$$ with $S=\{(1,2),(1,3), (1,4), (2,3), (2,4), (3,4), (5,6), (7,8)\}$ and $S'=\{ (i,j)\colon (j, i) \in S \}$. Therefore, there is no graded isomorphism between $L_{\mathbb K}(E')\otimes_G L_{\mathbb K}(E')$
and $L_{\mathbb K}(\widehat{E'})$.

 \subsection{A formulation of the conjecture}
 Recall that a vertex $v\in Q_0$ is a bifurcation (or that there is a bifurcation at $v$) if $s^{-1}(v)$ has at least two elements. A vertex $u\in Q_0$ is called a \textit{line point} if there are neither bifurcations nor cycles at any vertex $w \in T(u)$. For a quiver $Q$, denote by $\pl(Q)$ the set of point-line vertices of the quiver $Q$.  We know that $\soc(L_{\mathbb K}(Q))$ is the ideal of $L_{\mathbb K}(Q)$ generated by
 $\pl(Q)$. This allows us to define a sequence of vertices $\pl^n(Q)$ in the following way: we write $\pl^1(Q):=\pl(Q)$ and for any ordinal:
 $$\begin{cases}\pl\nolimits^{\a+1}(Q)=\pl\nolimits^\a(Q)\cup\pl(Q\setminus\overline{\pl\nolimits^\a(Q)}),\\
 \pl\nolimits^\b(Q)=\cup_{\a<\b}\pl\nolimits^\a(Q), \text{if $\b$ is a limit ordinal},\end{cases}$$
 
\noindent where $\overline{\pl\nolimits^\a(Q)}$ denotes the  hereditary saturated closure of $\pl\nolimits^\a(Q)$.
 
 In the setting of finite quivers we will need the sets $\pl^n(Q)$
 only for finite ordinals $n$. Recall that for an algebra $A$, the Loewy series of socles $\soc^\a(A)$ is defined analogously by $\soc^1(A):=\soc(A)$ and for any ordinal
 $$\begin{cases}\soc(A/\soc^\a(A))=\soc^{\a+1}(A)/\soc^\a(A),\\
 \soc^\b(A)=\sum_{\a<\b}\soc^\a(A), \text{if $\b$ is a limit ordinal}.\end{cases}$$
 \begin{definition}
 We define $$\overrightarrow{\pl}(Q):=\cup_\a\pl\nolimits^\a(Q), \quad 
 \overrightarrow{\soc}(L_{\mathbb K}(Q)):=\sum_\a\soc\nolimits^\a(L_{\mathbb K}(Q))$$
 where $\a$ ranges in the class of all ordinals.
 \end{definition}
 
 \begin{conjecture} \label{conj1} There exists a short exact sequence
 \
 \begin{equation}\label{new}
 \overrightarrow{\soc}(L_{\mathbb K}(\widehat{Q}))\hookrightarrow L_{\mathbb K}(\widehat{Q})\twoheadrightarrow L_{\mathbb K}(Q\setminus \overrightarrow{P_l})\otimes_G L_{\mathbb K}(Q\setminus \overrightarrow{P_l}),
 \end{equation}
where $Q\setminus\overrightarrow{\pl}$ denotes the quiver obtained by
removing from $Q$, the hereditary and saturated closure of $\overrightarrow{\pl}$. 
\end{conjecture}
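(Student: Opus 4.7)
The plan is to identify $L_{\mathbb K}(\widehat Q)/\overrightarrow{\soc}(L_{\mathbb K}(\widehat Q))$ with the Leavitt path algebra of a reduced Kronecker square, and then apply Theorem 4.1. Write $H := \overline{\overrightarrow{\pl}(Q)}\subseteq Q_0$ and $\widetilde H := \overline{\overrightarrow{\pl}(\widehat Q)}\subseteq\widehat{Q_0}$ for the hereditary saturated closures. Using $\soc(L_{\mathbb K}(Q'))=\langle\pl(Q')\rangle$ and the correspondence between graded ideals and hereditary saturated vertex subsets, a transfinite induction along the Loewy series of socles yields
\begin{equation*}
L_{\mathbb K}(\widehat Q)\big/\overrightarrow{\soc}(L_{\mathbb K}(\widehat Q))\;\cong\;L_{\mathbb K}\bigl(\widehat Q\setminus\widetilde H\bigr).
\end{equation*}

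The combinatorial core is the identity $\widehat Q\setminus\widetilde H=\widehat{Q\setminus H}$, equivalently $\widetilde H=S$ where $S:=\{[v,w]\in\widehat{Q_0}:v\in H\text{ or }w\in H\}$. First, $S$ is hereditary (inherited from $H$) and saturated: if $v,w\notin H$, then saturation of $H$ in $Q$ forces each of $v,w$ to have at least one successor outside $H$, whence $[v,w]$ has a successor outside $S$. Thus $\widetilde H\subseteq S$ will follow once one checks $\overrightarrow{\pl}(\widehat Q)\subseteq S$. The reverse inclusion $S\subseteq\widetilde H$ is shown by transfinite induction on the Loewy ordinal: for the successor step, if $v\in\overline{\pl^{\alpha}(Q)}$, then every child of $[v,w]$ is of the form $[v',w']$ with $v'\in\overline{\pl^{\alpha}(Q)}$ (by heredity), hence lies in some $\overline{\pl^{\alpha'}(\widehat Q)}$ inductively; the saturation axiom of $\widehat Q$ (applicable because row-finiteness of $Q$ transfers to $\widehat Q$) then absorbs $[v,w]$ into a further Loewy stage. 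In the finite-quiver setting one can bypass the induction entirely: $H$ coincides with the set $V_{\mathrm{acyc}}(Q)$ of vertices not reaching a cycle, and a cycle-synchronisation argument (given cycles at $v',w'$ of lengths $n,m$ reached from $v,w$ by paths of lengths $p,q$, lengthen each by traveling around its cycle to match lengths, producing a vertex $[v'',w'']$ on a cycle of length $\lcm(n,m)$ in $\widehat Q$) proves $\widetilde H=V_{\mathrm{acyc}}(\widehat Q)=\{[v,w]:v\text{ or }w\in V_{\mathrm{acyc}}(Q)\}$ directly.

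For the final step, every vertex of $Q\setminus H$ has at least one successor in $Q\setminus H$ (otherwise saturation in $Q$ would force it into $H$), so $Q\setminus H$ is row-finite with no sinks, and Theorem 4.1 yields
\begin{equation*}
L_{\mathbb K}\bigl(\widehat{Q\setminus H}\bigr)\;\cong\;L_{\mathbb K}(Q\setminus H)\otimes_G L_{\mathbb K}(Q\setminus H).
\end{equation*}
Chaining the three isomorphisms delivers the short exact sequence (\ref{new}).

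The main obstacle is the transfinite induction in the combinatorial identity. For infinite quivers $H$ can be strictly smaller than $V_{\mathrm{acyc}}(Q)$ (e.g.\ an infinite binary tree has no line points at all, so $H=\emptyset$ while $V_{\mathrm{acyc}}=Q_0$), so the clean cycle-reachability shortcut available for finite quivers breaks down; one must instead match the two Loewy stratifications in $Q$ and $\widehat Q$ stage by stage, and control the interaction between saturation and infinite emitters. This is the principal technical hurdle and is likely to confine a clean proof to the row-finite setting.
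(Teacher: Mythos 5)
The statement you are proving is left as an \emph{open conjecture} in the paper: the authors only note that it degenerates correctly when $\pl(Q)=\emptyset$ or $Q_0=\pl(Q)$ and verify it by hand for the Toeplitz quiver, so there is no proof of record to compare against. Your strategy (pass to $L_{\mathbb K}(\widehat Q)/\overrightarrow{\soc}(L_{\mathbb K}(\widehat Q))\cong L_{\mathbb K}(\widehat Q\setminus\widetilde H)$ with $\widetilde H=\overline{\overrightarrow{\pl}(\widehat Q)}$, identify $\widehat Q\setminus\widetilde H$ with $\widehat{Q\setminus H}$ for $H=\overline{\overrightarrow{\pl}(Q)}$, then apply Theorem 4.1) is the natural one, and the first and last steps are sound for row-finite $Q$. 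The gap is in the inclusion $S\subseteq\widetilde H$. The ``transfinite induction on the Loewy ordinal'' you offer is not well-founded: to place $[v,w]$ in $\widetilde H$ you appeal to its children already lying in $\widetilde H$, but the children sit at the same Loewy stage of $Q$, and the recursion never bottoms out when $T(v)$ is infinite. In fact the inclusion is false. Let $Q$ be the disjoint union of an infinite ray $v_0\rightarrow v_1\rightarrow\cdots$ and the rose $R_2$ with vertex $c$. Every $v_i$ is a line point, so $H=\{v_i\}$ and $[v_0,c]\in S$; but in $\widehat Q$ the tree of $[v_0,c]$ is the ray $[v_i,c]\rightarrow[v_{i+1},c]$ with two parallel edges at each step, which has no line points, no sinks and no cycles, so no vertex of it ever enters any $\pl^{\a}(\widehat Q)$ or gets absorbed by saturation; hence $[v_0,c]\notin\widetilde H$. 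Worse, this example refutes the conjecture itself: here $\overrightarrow{\soc}(L_{\mathbb K}(\widehat Q))$ is exactly the summand coming from ray$\,\times\,$ray, so the quotient is $L_{\mathbb K}(\mathrm{ray}\otimes R_2)\oplus L_{\mathbb K}(R_2\otimes \mathrm{ray})\oplus L_{\mathbb K}(R_4)$, a non-unital algebra with three nonzero ideal summands, whereas the right-hand side $L_{\mathbb K}(R_2)\otimes_G L_{\mathbb K}(R_2)\cong L_{\mathbb K}(1,4)$ is unital and simple. No argument can close this gap without extra hypotheses.

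On the positive side, your plan does go through for \emph{finite} $Q$ and would establish Conjecture 4.5, which is more than the paper proves. There every line point of $Q$ has a finite tree ending in a sink, so the saturation step for $S\subseteq\widetilde H$ becomes a genuine well-founded induction on the distance to that sink (and then on the saturation stage for the closure), while your cycle-synchronisation argument correctly yields $\pl(\widehat Q)\subseteq S$ because $Q\setminus H$ is then a finite quiver without sinks in which every vertex reaches a closed path. I would recommend recasting your write-up as a proof of the finite-quiver version together with the ray-plus-rose counterexample to the general statement.
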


  Notice that in case $P_l=\emptyset$ (which implies that there are no sinks), formula \eqref{new}  reduces to $L_{\mathbb K}(\widehat{Q})\cong L_{\mathbb K}(Q)\otimes_G L_{\mathbb K}(Q)$ which is precisely \cite[Theorem 7]{GBG}. When $Q=P_l$, it reduces to a well-known fact that $\soc(L_{\mathbb K}(\widehat{Q}))=L_{\mathbb K}(\widehat{Q})$.
Observe that the short exact sequence \eqref{new} is equivalent to the isomorphism 
$$\frac{L_{\mathbb K}(\widehat{Q})}{\overrightarrow{\soc}(L_{\mathbb K}(\widehat{Q}))}\cong L_{\mathbb K}(Q\setminus \overrightarrow{P_l})\otimes_G L_{\mathbb K}(Q\setminus \overrightarrow{P_l})$$

Next, we proceed to show that if a quiver $Q$ has finite number of vertices, then the set $\pl(Q\setminus\overline{\pl(Q)})$ is an empty set. 

\begin{proposition}\label{alme} Let $Q$ be a row-finite quiver.
    If $u\in \pl (Q\setminus{\overline{\pl(Q)}})$, then there is an $f \in s^{-1}(u)$ with $r(f)\notin\overline{\pl(Q)}$ and $r(f)\neq u$.
    More generally if $\{u_1,\ldots,u_k\}\cap\overline{\pl(Q)}=\emptyset$
    and the $u_j$'s are all different and satisfy the following two conditions:
    \begin{enumerate}
        \item $u_{i+1}\in r(s^{-1}(u_i))$ for $i=1,\ldots ,k-1$ and
        \item $u_1\in\pl(Q\setminus\overline{\pl(Q)}),$
        \end{enumerate} then there is an
    $u_{k+1}\not\in\{u_1,\ldots,u_k\}$ such that 
    $u_{k+1}\not\in\overline{\pl(Q)}$ and $u_{k+1}\in r(s^{-1}(u_k))$.
\end{proposition}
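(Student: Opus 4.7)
The plan is to prove both statements together by induction on $k$, since the first assertion is exactly the $k=1$ case of the general claim (with condition (1) vacuous). The two tools driving the argument are the saturation of $\overline{\pl(Q)}$ in $Q$, and the fact that the line-point property passes to every tree-descendant of a line point.

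For the base case, I would first rule out the possibility that $u=u_1$ is a sink of $Q$: if it were, then $T(u)=\{u\}$ would be bifurcation- and cycle-free, giving $u\in\pl(Q)\subseteq\overline{\pl(Q)}$, which contradicts $u\in\pl(Q\setminus\overline{\pl(Q)})$. Since $Q$ is row-finite, $u$ is therefore regular. If every $e\in s^{-1}(u)$ satisfied $r(e)\in\overline{\pl(Q)}$, then saturation of $\overline{\pl(Q)}$ would force $u\in\overline{\pl(Q)}$, a contradiction. So some $f\in s^{-1}(u)$ has $r(f)\notin\overline{\pl(Q)}$. Lastly, $r(f)\neq u$: otherwise $f$ is a loop at $u$ that survives inside $Q\setminus\overline{\pl(Q)}$, producing a cycle in $T(u)$ there and contradicting the line-point hypothesis on $u$.

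For the inductive step, my first observation is that each $u_j$ with $1\le j\le k$ is itself a line point of $Q\setminus\overline{\pl(Q)}$. Indeed, since none of the $u_i$'s lie in $\overline{\pl(Q)}$, the arrows witnessing $u_{i+1}\in r(s^{-1}(u_i))$ survive in $Q\setminus\overline{\pl(Q)}$, so every $u_j$ lies in $T(u_1)$ computed inside the restricted quiver, and the line-point property of $u_1$ transfers verbatim to $u_j$. Applying the base case to $u_k$ then produces $f\in s^{-1}(u_k)$ with $u_{k+1}:=r(f)\notin\overline{\pl(Q)}$ and $u_{k+1}\neq u_k$. To exclude the remaining possibility that $u_{k+1}=u_j$ for some $j<k$, observe that such an equality would yield the closed path $u_j\to u_{j+1}\to\cdots\to u_k\to u_j$ inside $Q\setminus\overline{\pl(Q)}$, hence a cycle at $u_j\in T(u_1)$, contrary to $u_1$ being a line point of $Q\setminus\overline{\pl(Q)}$. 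Therefore $u_{k+1}\notin\{u_1,\ldots,u_k\}$ and the induction closes.

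The only real subtlety is bookkeeping: deciding whether a given property is being invoked inside $Q$ or inside $Q\setminus\overline{\pl(Q)}$. Saturation must be applied in $Q$ (where regularity and the original $s^{-1}$ live), whereas the no-cycle conclusion exploits the line-point structure of the restricted quiver and needs the fact that the relevant arrows genuinely survive the removal of $\overline{\pl(Q)}$. Beyond that, everything reduces to unwinding the definitions of hereditary, saturated, and line point, together with row-finiteness to guarantee that non-sink vertices are regular.
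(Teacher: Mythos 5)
Your proof is correct and follows essentially the same route as the paper's: use saturation of $\overline{\pl(Q)}$ to produce an edge whose range avoids the closure, and use the line-point property of $u_1$ in the restricted quiver (inherited by each $u_j$ lying in its tree) to rule out loops and closed paths. Your explicit preliminary step ruling out the sink case (a sink would already be a line point of $Q$, hence in $\overline{\pl(Q)}$) is a small but worthwhile clarification that the paper leaves implicit, since saturation only applies to regular vertices.
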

\begin{proof}
If $r(s^{-1}(u))\subset \overline{\pl(Q)}$, then $u \in \overline{\pl(Q)}$, a contradiction. Therefore, there is some edge $f\in s^{-1}(u)$ such that $r(f)\not\in\overline{\pl(Q)}$. Besides, if $r(f)=u$, then the tree of $u$ in the quiver $Q\setminus\overline{\pl(Q)}$ contains the cycle $f$ which is a contradiction to the fact that $u$ is a line-point of this graph.
For the second assertion, we apply the previously proved result to $u_k$. So we need to check that $u_k \in\pl (Q\setminus{\overline{\pl(Q)}})$, but $u_k$ is in the tree of $u_1$ relative to the graph $Q\setminus \overline{\pl (Q)}$.  So, being $u_1$ a point-line, then $u_k$ also is.
Consequently, there is an edge $f$ whose
range (denoted $u_{k+1}$) is not in $\overline{\pl(Q)}$ and $u_{k+1}\ne u_k$. Moreover,
if $u_{k+1}=u_j$ with $j<k$, then there is a closed path whose vertices are $u_j,\ldots,u_k,u_{k+1}$ and this contradicts that $u_1\in\pl(Q\setminus\overline{\pl(Q)}).$
\end{proof}
As a consequence we have the following.
\begin{corollary}
If for a row-finite quiver $Q$, the set of vertices $Q_0$ is finite, then we have $\pl(Q\setminus\overline{\pl(Q)})=\emptyset$. Furthermore, $\overrightarrow{\pl}(Q)=P_l^1(Q)$ and $\soc^{2}(L_{\mathbb K}(Q))=\soc^{1}(L_{\mathbb K}(Q))$ implying $\overrightarrow{\soc}(L_{\mathbb K}(Q))=\soc^{1}(L_{\mathbb K}(Q))$.

\end{corollary}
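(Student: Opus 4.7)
The plan is to deduce every conclusion from Proposition \ref{alme} together with the finiteness of $Q_{0}$.

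\textbf{Step 1 (emptiness of $\pl(Q\setminus\overline{\pl(Q)})$).} I would argue by contradiction. Suppose there exists $u_{1}\in\pl(Q\setminus\overline{\pl(Q)})$. Then the first assertion of Proposition \ref{alme} furnishes $u_{2}\in r(s^{-1}(u_{1}))$ with $u_{2}\neq u_{1}$ and $u_{2}\notin\overline{\pl(Q)}$. Having built distinct vertices $u_{1},\ldots,u_{k}$ satisfying the two hypotheses of the second assertion of Proposition \ref{alme}, that result provides a new vertex $u_{k+1}\notin\{u_{1},\ldots,u_{k}\}$ with $u_{k+1}\in r(s^{-1}(u_{k}))$ and $u_{k+1}\notin\overline{\pl(Q)}$. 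Iterating, I produce an infinite family of pairwise distinct vertices of $Q$, contradicting $|Q_{0}|<\infty$. Hence $\pl(Q\setminus\overline{\pl(Q)})=\emptyset$.

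\textbf{Step 2 (stabilization of $\pl^{\alpha}$).} By the very definition of the Loewy-type sequence,
\[
\pl\nolimits^{2}(Q)=\pl\nolimits^{1}(Q)\cup\pl(Q\setminus\overline{\pl\nolimits^{1}(Q)})=\pl\nolimits^{1}(Q)\cup\emptyset=\pl\nolimits^{1}(Q).
\]
A straightforward transfinite induction (the successor step is exactly what we just did, and the limit step is immediate from the union definition) then gives $\pl^{\alpha}(Q)=\pl^{1}(Q)$ for every ordinal $\alpha$, so $\overrightarrow{\pl}(Q)=\pl^{1}(Q)$.

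\textbf{Step 3 (socle translation).} Here I would invoke the standard identification $\soc^{1}(L_{\mathbb K}(Q))=\langle\pl(Q)\rangle$ together with the fact that the quotient of $L_{\mathbb K}(Q)$ by the ideal generated by the hereditary saturated set $\overline{\pl(Q)}$ is (isomorphic to) $L_{\mathbb K}(Q\setminus\overline{\pl(Q)})$. Taking socles and using Step 1,
\[
\soc\nolimits^{2}(L_{\mathbb K}(Q))/\soc\nolimits^{1}(L_{\mathbb K}(Q))=\soc(L_{\mathbb K}(Q\setminus\overline{\pl(Q)}))=\langle\pl(Q\setminus\overline{\pl(Q)})\rangle=0,
\]
so $\soc^{2}(L_{\mathbb K}(Q))=\soc^{1}(L_{\mathbb K}(Q))$. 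Repeating this argument (or arguing by transfinite induction, using commutation of the construction with direct limits at limit ordinals) yields $\overrightarrow{\soc}(L_{\mathbb K}(Q))=\soc^{1}(L_{\mathbb K}(Q))$.

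\textbf{Main obstacle.} The only non-bookkeeping ingredient is Step 1, and even there the work has essentially been done in Proposition \ref{alme}: the delicate point one must be careful about is that the sequence of vertices constructed has to remain outside $\overline{\pl(Q)}$ (not merely outside $\pl(Q)$), which is precisely what the second assertion of Proposition \ref{alme} guarantees. Once this is noted, finiteness of $Q_{0}$ closes the argument and Steps 2 and 3 are routine consequences of the definitions and of the socle/quotient correspondence for hereditary saturated sets.
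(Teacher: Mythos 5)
Your proposal is correct and follows essentially the same route as the paper: deducing $\pl(Q\setminus\overline{\pl(Q)})=\emptyset$ from Proposition \ref{alme} plus finiteness of $Q_0$, then stabilizing the $\pl^\alpha$ sequence, and finally translating to socles via $\soc^{2}/\soc^{1}=\soc(L_{\mathbb K}(Q\setminus\overline{\pl(Q)}))=\langle\pl(Q\setminus\overline{\pl(Q)})\rangle=0$. Your Step 1 merely spells out the iteration that the paper compresses into one sentence.
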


\begin{proof}
If $\pl(Q\setminus\overline{\pl(Q)})$ is non-empty then by the Proposition \ref{alme}, the set of vertices $Q_0$ is infinite. Thus, if $Q_0$ is finite, then we have $\pl(Q\setminus\overline{\pl(Q)})=\emptyset$. Therefore $\pl^2(Q)=\pl^1(Q)\cup \pl(Q\setminus\overline{\pl(Q)})=\pl^1(Q)$, so it follows $\overrightarrow{\pl}(Q)=P_l^1(Q)$. On the other hand, $\soc^2(L_{\mathbb K}(Q))/\soc^1(L_{\mathbb K}(Q))=\soc(L_{\mathbb K}(Q)/\soc^1(L_{\mathbb K}(Q)))=\soc(L_{\mathbb K}(Q\setminus\overline{\pl(Q)}))$. Since $\soc(L_{\mathbb K}(Q\setminus\overline{\pl(Q)}))$ is the ideal generated by $\pl(Q\setminus \overline{\pl(Q)})$, we have $\soc(L_{\mathbb K}(Q\setminus\overline{\pl(Q)}))=0$ and so $\soc^2(L_{\mathbb K}(Q))=\soc^1(L_{\mathbb K}(Q))$ hence
$\overrightarrow{\soc}(L_{\mathbb K}(Q))=\soc^{1}(L_{\mathbb K}(Q))$.
\end{proof}

Thus, for a finite quiver $Q$, Conjecture \ref{conj1} adopts the following form.
\begin{conjecture} \label{conj2}
For a finite quiver $Q$, there exists a short exact sequence
\begin{equation}
 \soc(L_{\mathbb K}(\widehat{Q}))\hookrightarrow L_{\mathbb K}(\widehat{Q})\twoheadrightarrow L_{\mathbb K}(Q\setminus \overline{P_l(Q)})\otimes_G L_{\mathbb K}(Q\setminus \overline{P_l(Q)}),
 \end{equation}
or equivalently
\begin{equation}\label{alme2}
    \frac{L_{\mathbb K}(\widehat{Q})}{\soc(L_{\mathbb K}(\widehat{Q}))}\cong L_{\mathbb K}(Q\setminus \overline{\pl(Q))}\otimes_G L_{\mathbb K}(Q\setminus \overline{\pl(Q)}).
\end{equation}
\end{conjecture}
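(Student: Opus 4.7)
The plan is to split the proof into three steps: identify $\overline{\pl(\widehat{Q})}$ inside $\widehat{Q_0}$, identify the resulting quotient quiver, and apply \cite[Theorem 7]{GBG}. Set $H:=\overline{\pl(Q)}$ and $Q':=Q\setminus H$. The preceding corollary gives $\pl(Q')=\emptyset$. I would first check that $Q'$ has no sinks: a sink in $Q'$ is either a sink of $Q$ (so in $\pl(Q)\subseteq H$) or a regular vertex $v$ of $Q$ with $r(s^{-1}(v))\subseteq H$ (so in $H$ by saturation), both contradicting $v\in Q'$. Since $Q$ is finite (hence row-finite) and $Q'$ has no sinks, \cite[Theorem 7]{GBG} yields $L_{\mathbb K}(\widehat{Q'})\cong L_{\mathbb K}(Q')\otimes_G L_{\mathbb K}(Q')$; this will be the final ingredient.

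The central identification is
\[
\overline{\pl(\widehat{Q})}=\widehat{H}:=\{[v,w]\in\widehat{Q_0}:v\in H\text{ or }w\in H\}.
\]
I would first verify that $\widehat{H}$ is hereditary and saturated in $\widehat{Q}$. Hereditariness is immediate from the componentwise description of edges of $\widehat{Q}$; saturation uses that, if $v,w\notin H$ with both regular, saturation of $H$ in $Q$ produces $e\in s^{-1}(v)$ and $f\in s^{-1}(w)$ with $r(e),r(f)\notin H$, giving an edge from $[v,w]$ whose range lies outside $\widehat{H}$. Next I would show $\pl(\widehat{Q})\subseteq\widehat{H}$ by contradiction. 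If $[v,w]$ is a line point of $\widehat{Q}$ with $v,w\notin H$, the absence of bifurcations forces each vertex $[v_i,w_i]$ along $T_{\widehat{Q}}([v,w])$ to have a unique outgoing edge, so $v_i$ and $w_i$ each have exactly one outgoing edge in $Q$; the absence of cycles combined with the finiteness of $\widehat{Q}$ forces the tree to be a finite path ending at a sink $[v_N,w_N]$. Then (say) $v_N$ is a sink of $Q$, and uniqueness of continuations implies $v_0,\dots,v_N$ are pairwise distinct, so $T_Q(v)=\{v_0,\dots,v_N\}$ has neither bifurcations nor cycles, contradicting $v\notin H$.

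For the reverse inclusion $\widehat{H}\subseteq\overline{\pl(\widehat{Q})}$, I would use the layered construction $L_0:=\pl(Q)$ and $L_{n+1}:=L_n\cup\{v\text{ regular}:r(s^{-1}(v))\subseteq L_n\}$, which stabilizes at $H$ by finiteness. For the base case $v\in\pl(Q)$ with tree $v=v_0\to\cdots\to v_k$ (sink at $v_k$), a reverse induction on $i$ from $k$ down to $0$ places every $[v_i,w]$ in $\overline{\pl(\widehat{Q})}$: either $[v_i,w]$ is a sink of $\widehat{Q}$ (hence a line point), or its ranges are of the form $[v_{i+1},r(f)]$, which lie in $\overline{\pl(\widehat{Q})}$ by the inductive hypothesis, so saturation absorbs $[v_i,w]$. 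The step from $L_{n-1}$ to $L_n$ is the same saturation argument, and the case $w\in H$ is handled symmetrically.

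Combining these, $I(\widehat{H})=I(\pl(\widehat{Q}))=\soc(L_{\mathbb K}(\widehat{Q}))$, and the standard quotient description for row-finite quivers gives
\[
L_{\mathbb K}(\widehat{Q})/\soc(L_{\mathbb K}(\widehat{Q}))\cong L_{\mathbb K}(\widehat{Q}\setminus\widehat{H}).
\]
A routine check shows $\widehat{Q}\setminus\widehat{H}=\widehat{Q'}$ as quivers, since the surviving edges $[e,f]$ are exactly those with all four endpoints outside $H$, i.e., edges of $Q'$; combined with the first paragraph, this yields \eqref{alme2}. The main obstacle will be the inclusion $\pl(\widehat{Q})\subseteq\widehat{H}$, where extracting a line point of $Q$ from a line point in $\widehat{Q}$ requires a careful exploitation of the finiteness of $\widehat{Q}$ together with the rigid path-like structure of line-point trees.
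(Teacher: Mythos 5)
The paper offers no proof of this statement: it is posed there as a conjecture and is only verified for the Toeplitz algebra in the remark that follows it, so there is no argument of record to compare yours against. On its own merits your proposal looks correct and complete for finite quivers, and as far as I can check it settles the conjecture. The decisive point is the identity $\overline{\pl(\widehat{Q})}=\widehat{H}$ with $H=\overline{\pl(Q)}$ and $\widehat{H}=\{[v,w]:v\in H\ \text{or}\ w\in H\}$, and all three ingredients hold up: (i) $\widehat{H}$ is hereditary and saturated, your use of saturation of $H$ in $Q$ to manufacture an edge $[e,f]$ escaping $\widehat{H}$ being exactly the right move; (ii) $\pl(\widehat{Q})\subseteq\widehat{H}$, since $|s^{-1}([v_i,w_i])|=|s^{-1}(v_i)|\cdot|s^{-1}(w_i)|$ forces each coordinate of a bifurcation-free, cycle-free tree in $\widehat{Q}$ to emit exactly one edge until a sink is reached, and the determinism of single-edge continuations makes the projected sequence of vertices in $Q$ pairwise distinct, so one coordinate is a line point of $Q$; (iii) $\widehat{H}\subseteq\overline{\pl(\widehat{Q})}$ by your double induction, using that $[v,w]$ is a sink of $\widehat{Q}$ whenever either coordinate is a sink of $Q$. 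Two points you should make explicit in a full write-up: first, $\pl(Q)$ is already hereditary, so the hereditary saturated closure really is computed by saturation layers alone, as your layered construction tacitly assumes; second, the degenerate case $Q\setminus H=\emptyset$, where \cite[Theorem 7]{GBG} does not apply, must be disposed of separately, though there the claim reduces to $\soc(L_{\mathbb K}(\widehat{Q}))=L_{\mathbb K}(\widehat{Q})$, which follows from $\widehat{H}=\widehat{Q_0}$. With the identity in hand, the remaining steps (the socle as the ideal generated by line points, $I(X)=I(\overline{X})$, the quotient-graph isomorphism $L_{\mathbb K}(\widehat{Q})/I(\widehat{H})\cong L_{\mathbb K}(\widehat{Q}\setminus\widehat{H})$, the observation $\widehat{Q}\setminus\widehat{H}=\widehat{Q\setminus H}$, and \cite[Theorem 7]{GBG} applied to the sink-free quiver $Q\setminus H$) are all standard for finite quivers. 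Note that your identification of $\overline{\pl(\widehat{Q})}$ is also consistent with the paper's computations for the quivers $E$, $E'$ and $T$, so nothing in the paper's counterexample section conflicts with your conclusion; this argument goes well beyond what the paper establishes, and you should write it up carefully as a theorem rather than a verification.
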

\begin{remark} \rm
Let us see that the Toeplitz algebra satisfies the above conjecture. We will denote by $T$ the quiver 

\[T: \begin{tikzcd}
	 {\bullet_u} & {\bullet_v}
	\arrow[from=1-1, to=1-1, loop, in=145, out=215, distance=10mm]
	\arrow[from=1-1, to=1-2]
\end{tikzcd}\]
Here we have $\pl(T)=\overline{\pl(T)}=\{v\}$ and 
the quiver $T\setminus\overline{\pl(T)}=T\setminus\{v\}$ is a simple loop. 
The adjacency matrix of $T$ is $\tiny\begin{pmatrix}1 & 1\\0 & 0\end{pmatrix}$. We have 
$$\tiny\begin{pmatrix}1 & 1\\0 & 0\end{pmatrix}\otimes \begin{pmatrix}1 & 1\\0 & 0\end{pmatrix}=\begin{pmatrix}1 & 1 & 1 & 1\\
0 & 0 & 0 &0\\0 & 0 & 0 &0\\0 & 0 & 0& 0\\
\end{pmatrix}.$$
Then the Kronecker square $\widehat{T}$ is the quiver:

\[\widehat{T}: \begin{tikzcd}
	&& \bullet_{v_1} \\
	\bullet_u && \bullet_{v_2} \\
	&& \bullet_{v_3}
	\arrow[from=2-1, to=1-3,"f_1"]
	\arrow[from=2-1, to=2-1, "c", loop, in=145, out=215, distance=10mm]
	\arrow[from=2-1, to=2-3,"f_2"]
	\arrow[from=2-1, to=3-3,"f_3"]
\end{tikzcd}\]
So, $\soc(L_{\mathbb K}(\widehat{T}))\cong M_{\infty}({\mathbb K})^3$ and $L_{\mathbb K}(\widehat{T})/M_{\infty}({\mathbb K})^3 \cong {\mathbb K}[x,x^{-1}]$.
On the other hand, $\pl(\widehat{T})=\overline{\pl(\widehat{T})}=\{v_1,v_2,v_3\}$ and $\widehat{T}\setminus\overline{\pl(\widehat{T})}=\widehat{T}\setminus\{v_1,v_2,v_3\}$ is
a simple loop. So, in equation \eqref{alme2}, 
$L_{\mathbb K}(T\setminus \overline{\pl(T)})\otimes_G L_{\mathbb K}(T\setminus \overline{\pl(T)})\cong {\mathbb K}[x,x^{-1}]\otimes_G {\mathbb K}[x,x^{-1}]\cong {\mathbb K}[x,x^{-1}]$
(take into account that if $L$ is a Leavitt path algebra over a quiver $Q$ consisting of just one vertex with a single loop and $\widehat{L}$ is the Leavitt path algebra over $\widehat{Q}$ then we have $\widehat{L}\cong L$). This shows that the Conjecture \ref{conj2} holds for the Toeplitz algebra. Equivalently, we have a short exact sequence 
$$M_\infty({\mathbb K})^3\hookrightarrow L_{\mathbb K}(\widehat{T})\twoheadrightarrow {\mathbb K}[x,x^{-1}].$$

\end{remark}

\section{Leavitt path algebra over Kronecker square and Hayashi's Face Algebra}

\noindent Given a quiver $Q$, we extend it to a new quiver $\overline{Q}$, where for each arrow $e\in Q_{1}$, we have the ghost arrow $e^{\ast}$ in the opposite direction. We call the new quiver $\overline{Q}$ the \textit{double quiver}. 

\begin{definition} \rm
Let $Q=(Q_{0}, Q_{1}, s, r)$ be a finite quiver, $\overline{Q}$ be its double quiver and $\mathbb K$ be a field. We define an algebra $\mathcal{A}_{\mathbb K}(Q)$ to be the quotient algebra of face algebra $\mathcal{H}_{\mathbb K}(\overline{Q})$ over $\overline{Q}$ by the two-sided ideal $I$ generated by the following elements:

\medskip

(1) $x_{p^*, q^*} x_{p', q'} -\delta_{p, p'} \delta_{q, q'} x_{r(p), r(q)}$ for each $p, q, p', q' \in Q_1$. 

\medskip

(2) $\Sigma_{i=s(p), j=s(q)} x_{p, q} x_{p^*, q^*}-x_{i, j}$ for each $i, j\in Q_0$ that are not sinks, that is, they emit at least one arrow.   

\end{definition}

Next, we show that the algebra $\mathcal{A}_{\mathbb K}(Q)$ is indeed isomorphic to the Leavitt path algebra $ L_{\mathbb K}(\widehat{Q})$. 

\begin{theorem} \label{t1}
For a finite quiver $Q$, $\mathcal{A}_{\mathbb K}(Q) \cong L_{\mathbb K}(\widehat{Q})$.
\end{theorem}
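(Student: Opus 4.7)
The strategy is to exhibit mutually inverse algebra homomorphisms between $L_{\mathbb K}(\widehat{Q})$ and $\mathcal{A}_{\mathbb K}(Q)$, using the universal property of the Leavitt path algebra on one side and the quotient presentation of $\mathcal{A}_{\mathbb K}(Q)$ on the other.

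For the forward direction, I would apply the universal property of $L_{\mathbb K}(\widehat{Q})$ to define $\varphi : L_{\mathbb K}(\widehat{Q}) \to \mathcal{A}_{\mathbb K}(Q)$ on generators by $\varphi([v_i, v_j]) = x_{v_i, v_j}$, $\varphi([e_i, e_j]) = x_{e_i, e_j}$, and $\varphi([e_i^*, e_j^*]) = x_{e_i^*, e_j^*}$. It then suffices to verify the four defining relations of $L_{\mathbb K}(\widehat{Q})$ in $\mathcal{A}_{\mathbb K}(Q)$: relations (1) and (2) follow directly from Hayashi's face algebra relations (1) and (2) applied to vertices and to arrows of $\overline{Q}$ (including ghost arrows $e^*$); the CK-1 relation is precisely the first family of generators of the ideal $I$; and the CK-2 relation, applied to each regular vertex $[v_i, v_j]$ of $\widehat{Q}$ --- which occurs exactly when both $v_i$ and $v_j$ are non-sinks in $Q$ --- is precisely the second family of generators of $I$.

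In the reverse direction, I would first construct $\tilde{\psi} : \mathcal{H}_{\mathbb K}(\overline{Q}) \to L_{\mathbb K}(\widehat{Q})$ by sending $x_{v_i, v_j}$, $x_{e_i, e_j}$, $x_{e_i^*, e_j^*}$ to their corresponding Leavitt generators, and sending the ``mixed'' generators $x_{e_i, e_j^*}$ and $x_{e_i^*, e_j}$ --- which correspond to arrows of $\widehat{\overline{Q}}$ that are not arrows of $\widehat{Q}$ --- to $0$. A case analysis on whether the arrows involved are pure or mixed shows that the three face algebra relations are preserved (pure cases yield Leavitt relations in $L_{\mathbb K}(\widehat{Q})$, while mixed cases collapse to $0 = 0$). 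Direct computation shows that the first family of generators of $I$ is sent to the CK-1 relation of $L_{\mathbb K}(\widehat{Q})$ and the second to CK-2, so $I \subset \ker(\tilde{\psi})$, and $\tilde{\psi}$ descends to $\psi : \mathcal{A}_{\mathbb K}(Q) \to L_{\mathbb K}(\widehat{Q})$.

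Finally, I would check that $\psi \circ \varphi = \id_{L_{\mathbb K}(\widehat{Q})}$ and $\varphi \circ \psi = \id_{\mathcal{A}_{\mathbb K}(Q)}$. The first is immediate on the generators of $L_{\mathbb K}(\widehat{Q})$. For the second, the identity clearly holds on the non-mixed generators $x_{v_i, v_j}, x_{e_i, e_j}, x_{e_i^*, e_j^*}$; the main obstacle --- and the most delicate step of the proof --- is to show that the mixed generators $x_{e_i, e_j^*}$ and $x_{e_i^*, e_j}$ already vanish in $\mathcal{A}_{\mathbb K}(Q)$, so that $(\varphi \circ \psi)(x_{e_i, e_j^*}) = 0 = x_{e_i, e_j^*}$. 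I expect this vanishing to follow by combining a CK-2 expansion of the vertex idempotent at one end of a mixed generator (rewriting it as $\sum x_{p, q} x_{p^*, q^*}$) with a CK-1 contraction against the resulting ghost factor, forcing the whole expression to $0$ modulo $I$. Some care is required when the relevant vertex of $\widehat{Q}$ is not regular (i.e., when one of the endpoints is a sink in $Q$), which likely needs a supplementary argument.
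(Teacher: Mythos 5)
Your first half coincides with the paper's proof: both invoke the universal property of $L_{\mathbb K}(\widehat{Q})$ to produce $\varphi$ sending $[v,w]\mapsto x_{v,w}$, $[p,q]\mapsto x_{p,q}$, $[p^*,q^*]\mapsto x_{p^*,q^*}$, with the relations checked exactly as you describe. From there the routes diverge: the paper concludes by citing the Graded Uniqueness Theorem for injectivity and asserting surjectivity in one line, whereas you build an explicit two-sided inverse $\psi$ through $\mathcal{H}_{\mathbb K}(\overline{Q})$. Your route is more informative in principle, but it forces you to confront head-on the question of what happens to the mixed generators $x_{p,q^*}$ and $x_{p^*,q}$ --- and that is where the argument breaks down.

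The step you flag as delicate is in fact a genuine gap, and the mechanism you propose for it does not close. After left-multiplying $x_{p,q^*}$ by the CK-2 expansion of $x_{s(p),r(q)}$ you are left with terms $x_{a,b}\,x_{a^*,b^*}\,x_{p,q^*}$, but the CK-1 generators of $I$ only govern products of the form $x_{a^*,b^*}x_{a',b'}$ with an \emph{unstarred} second factor; no relation in $\mathcal{H}_{\mathbb K}(\overline{Q})$ or in $I$ contracts $x_{a^*,b^*}$ against a mixed generator, so nothing vanishes. The failure is not confined to non-regular vertices: take $Q=R_1$ (one vertex $v$, one loop $e$). Then $\mathcal{H}_{\mathbb K}(\overline{R_1})$ is the free unital algebra on the four loops $x_{e,e}, x_{e^*,e^*}, x_{e,e^*}, x_{e^*,e}$, and $I$ is generated only by $x_{e^*,e^*}x_{e,e}-x_{v,v}$ and $x_{e,e}x_{e^*,e^*}-x_{v,v}$; the mixed loops survive as free generators of the quotient, so they certainly do not lie in the subalgebra generated by the pure generators, and $\varphi\circ\psi$ cannot be the identity. (This same issue is what hides behind the paper's ``clearly surjective,'' so you have correctly located the real subtlety of the statement; but as written your vanishing claim is false for the literal definition of $\mathcal{A}_{\mathbb K}(Q)$, and the proof would need either additional relations in the ideal $I$ killing the mixed generators, or a different argument for why the pure generators generate the quotient.)
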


\begin{proof}
Since $\{x_{v, w}, x_{p, q}, x_{p^*, q^*}: v, w\in Q_0, p, q \in Q_1\}$ is a set of generators in $\mathcal{A}_{\mathbb K}(Q)$ satisfying the CK-1 and CK-2 relations, by the universal property of Leavitt path algebras, there exists a unique $\mathbb K$-algebra homomorphism $\varphi: L_{\mathbb K}(\widehat{Q}) \rightarrow \mathcal{A}_{\mathbb K}(Q)$ such that $\varphi([v, w])=x_{v, w}$, $\varphi([p, q])=x_{p, q}$, and $\varphi([p, q]^{\ast})=x_{p^*, q^*}$. 
Note that $\mathcal{A}_{\mathbb K}(Q)$ is a $\mathbb Z$-graded algebra under the grading $\deg(x_{v, w})=0$, $\deg(x_{p, q})=1$, and $\deg(x_{p^*, q^*})=-1$. Clearly, $\varphi$ is graded and $\varphi$ is nonzero on each vertex, therefore, by the Graded Uniqueness Theorem, it follows that $\varphi$ is injective. Clearly this map $\varphi$ is surjective as well. Thus we have that $\mathcal{A}_{\mathbb K}(Q) \cong L_{\mathbb K}(\widehat{Q})$. 
\end{proof}

Next, we show that since the module category of Leavitt path algebras is well studied, it might help us understand the module category of Hayashi's face algebra. Recall that for a $\mathbb Z$-graded noncommutative $\mathbb K$-algebra $A$, the category of quasi-coherent sheaves on noncommutative scheme is defined as $\QGr A:= \Gr A/\Fdim A$ where $\Gr A$ is the category of $\mathbb Z$-graded modules over $A$ and $\Fdim A$ is the Serre subcategory of finite-dimensional submodules.  

Let $\mathbb K$ be a field, and $Q$ be a finite quiver. We denote the category of left $\mathbb KQ$-modules with degree-preserving homomorphisms as $\Gr\mathbb KQ$ and we denote its full subcategory of modules that are the sum of their finite-dimensional submodules. Smith \cite[Theorem 1.3]{Smith} studied the quotient category $\QGr\mathbb KQ:=\Gr\mathbb KQ/\Fdim\mathbb KQ$ and proved that if $Q^0$ is the quiver without sources or sinks that is obtained by repeatedly removing all sinks and sources from $Q$, then we have the category equivalence $\QGr\mathbb K Q \equiv \Gr L_{\mathbb K}(Q^0) \equiv \Mod (L_{\mathbb K}(Q^0))_0$.

First, we have the following simple observation that we state without proof. 
\begin{lemma}
For a finite quiver $Q$, let $Q^0$ denote the quiver without source or sinks obtained by repeatedly removing sources and sinks from $Q$. Then $(\widehat{Q})^0=\widehat{Q^0}$. 
\end{lemma}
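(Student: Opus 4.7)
The plan is to prove the lemma by induction on the number of sink-removal steps, using one clean combinatorial fact about the Kronecker square: a vertex $[v,w]$ is a sink in $\widehat{Q}$ if and only if $v$ is a sink in $Q$ or $w$ is a sink in $Q$. This follows at once from the definition $\widehat{s}([e,f]) = [s(e),s(f)]$: the outgoing arrows at $[v,w]$ are exactly the pairs $[e,f]$ with $s(e)=v$ and $s(f)=w$, so $s_{\widehat{Q}}^{-1}([v,w]) = \emptyset$ iff $s_Q^{-1}(v) = \emptyset$ or $s_Q^{-1}(w) = \emptyset$.

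Write $Q^{(n)}$ for the quiver obtained from $Q$ after $n$ iterations of removing the current set of sinks (together with all arrows into those sinks), so that $Q^0 = Q^{(N)}$ for some $N$ once the process stabilizes (which must happen since $Q$ is finite). My claim, proved by induction on $n$, is that the $n$-th iteration of sink removal applied to $\widehat{Q}$ equals $\widehat{Q^{(n)}}$. The base case $n=0$ is trivial. For the inductive step, assume the $n$-th iteration of sink removal on $\widehat{Q}$ is $\widehat{Q^{(n)}}$. Applying the combinatorial observation inside the subquiver $\widehat{Q^{(n)}}$ (which is itself the Kronecker square of $Q^{(n)}$, so the observation applies verbatim), the sinks of $\widehat{Q^{(n)}}$ are precisely the vertices $[v,w]$ with $v$ or $w$ a sink in $Q^{(n)}$. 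Removing these yields vertex set $\{[v,w] : v,w \in Q^{(n+1)}_0\}$, and an arrow $[e,f]$ survives exactly when $\widehat{r}([e,f]) = [r(e),r(f)]$ survives, i.e.\ when $r(e)$ and $r(f)$ are not sinks in $Q^{(n)}$, which is the condition for $e$ and $f$ to be arrows of $Q^{(n+1)}$. Hence the resulting subquiver is exactly $\widehat{Q^{(n+1)}}$.

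Since $Q$ is finite, the sequence $Q = Q^{(0)} \supseteq Q^{(1)} \supseteq \cdots$ stabilizes after finitely many steps, at which point $Q^{(N)} = Q^0$ has no sinks. By the induction above, the same number of sink-removal steps applied to $\widehat{Q}$ produces $\widehat{Q^{(N)}} = \widehat{Q^0}$, and this quiver has no sinks by the combinatorial observation (since $Q^0$ has none). Therefore the sink-removal process on $\widehat{Q}$ also terminates at this stage, giving $(\widehat{Q})^0 = \widehat{Q^0}$, as desired. There is no real obstacle here; the only point requiring care is that the set of arrows surviving a sink-removal step is determined by the range, not the source, and one must verify that this matches the Kronecker-square description on both sides — which it does cleanly through $\widehat{r}([e,f]) = [r(e),r(f)]$.
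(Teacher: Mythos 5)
Your proof is correct. The paper states this lemma explicitly without proof (``we state without proof''), so there is nothing to compare against; your argument --- the characterization that $[v,w]$ is a sink of $\widehat{Q}$ exactly when $v$ or $w$ is a sink of $Q$ (already noted in Section~2 of the paper), followed by induction on the sink-removal steps with the careful check that an arrow $[e,f]$ survives iff $[r(e),r(f)]$ does, matching the survival condition for $e$ and $f$ separately --- is precisely the ``simple observation'' the authors had in mind, and it correctly handles the simultaneous termination of both removal processes.
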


A $\mathbb Z$-graded ring $R=\oplus_{n\in \mathbb Z} R_n$ is called \textit{strongly graded} if $R_nR_{-n}=R_0$ for each $n$. Let us denote by $\Gr R$, the category of graded right $R$-modules and by $\Mod R$, the category of right $R$-modules. Dade's Theorem  \cite[Theorem 2.8]{Dade} states that $R$ is strongly graded if and only if the functors $$F:\Gr R\rightarrow \Mod R_0$$
and $$G:\Mod R_0\rightarrow \Gr R$$ form mutually inverse equivalences of categories. 

\begin{proposition}
For any finite quiver $Q$, the following are equivalent:
\begin{enumerate}
\item $L_{\mathbb K}(Q)$ is strongly graded.
\item $L_{\mathbb K}(\widehat{Q})$ is strongly graded.
\item $\Gr L_{\mathbb K}(Q)\equiv \Mod (L_{\mathbb K}(Q))_0$.
\item $\Gr L_{\mathbb K}(\widehat{Q})\equiv \Mod (L_{\mathbb K}(\widehat{Q}))_0$.
\end{enumerate}
\end{proposition}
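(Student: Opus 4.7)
The plan is to reduce the four-way equivalence to the single equivalence $(1)\Leftrightarrow(2)$: Dade's Theorem, recalled immediately before the proposition, gives $(1)\Leftrightarrow(3)$ and $(2)\Leftrightarrow(4)$ at once, since the functors $F$ and $G$ form mutually inverse equivalences precisely when the underlying $\mathbb Z$-graded ring is strongly graded.

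For $(1)\Leftrightarrow(2)$, I would prove as an intermediate step the following known characterization: for a finite quiver $Q$, the algebra $L_{\mathbb K}(Q)$ is strongly $\mathbb Z$-graded if and only if $Q$ has no sinks. The forward direction is short: if $v\in Q_0$ is a sink, then $vL_n=0$ for every $n\ge 1$, because every homogeneous element of positive degree is a linear combination of monomials $\alpha\beta^*$ with $|\alpha|\ge 1$, so $v\alpha\ne 0$ would force $s(\alpha)=v$ to emit an edge. Hence $v\notin L_1 L_{-1}$ and, since $1=\sum_{u\in Q_0}u$ is a sum of orthogonal vertex idempotents, $1\notin L_1 L_{-1}$, so $L_{\mathbb K}(Q)$ is not strongly graded. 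For the reverse direction, CK-2 gives $v=\sum_{s(e)=v}ee^*\in L_1 L_{-1}$ for every $v$, hence $1\in L_1 L_{-1}$. For the companion membership $1\in L_{-1}L_1$, one uses that in a finite sink-free quiver every vertex has a forward path to a cycle (by pigeonhole); given such a path $\mu$ from $v$ to a cycle vertex $u$ and a closed path $\nu$ at $u$ chosen so that $|\nu|=|\mu|+1$, the identity $v=\mu\mu^*=\mu(\nu^*\nu)\mu^*=(\mu\nu^*)(\nu\mu^*)$ places $v$ in $L_{-1}L_1$.

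The second step is the Kronecker-square observation: $Q$ has no sinks if and only if $\widehat Q$ has no sinks. Indeed, $[v,w]\in\widehat{Q_0}$ is a sink exactly when no arrow $[e,f]\in\widehat{Q_1}$ has source $[v,w]$, which happens iff there are no $e,f\in Q_1$ with $s(e)=v$ and $s(f)=w$, iff $v$ or $w$ is a sink in $Q$. This is precisely the sink-half of item (5) of Section~2 and is already worked out there. Combining the two steps yields $(1)\Leftrightarrow(2)$, closing the cycle $(1)\Leftrightarrow(2)\Leftrightarrow(4)\Leftrightarrow(3)\Leftrightarrow(1)$.

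The main obstacle is the companion membership $1\in L_{-1}L_1$ in the reverse direction of the sink-characterization: unlike $1\in L_1L_{-1}$, which falls out of CK-2 directly, this requires a cycle-type hypothesis and, in general, the length condition $|\nu|=|\mu|+1$ may not be realizable by a closed path at $u$ of the exact right length. One addresses this by first lengthening $\mu$ by an appropriate multiple of a cycle at $u$ so that $|\mu|+1$ becomes a valid length of some closed path at $u$, and, when $v$ has more than one outgoing edge, by applying the construction to each summand of the refined CK-2 decomposition $v=\sum_{|\mu|=k,\, s(\mu)=v}\mu\mu^*$ for $k$ chosen large enough that every such $\mu$ terminates on a cycle. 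As an alternative, this sink-characterization for finite quivers is a standard result in the Leavitt path algebra literature and may simply be invoked.
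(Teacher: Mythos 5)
Your overall route is exactly the paper's: cite the known fact that for a finite quiver $L_{\mathbb K}(Q)$ is strongly graded iff $Q$ has no sinks, combine it with the sink-half of observation (5) in Section~2 ($Q$ has no sinks iff $\widehat Q$ has no sinks), and get $(1)\Leftrightarrow(3)$ and $(2)\Leftrightarrow(4)$ from Dade's Theorem. With the fallback you offer in your last sentence (simply invoking Hazrat's characterization, which is what the paper does), the proof is complete and essentially identical to the paper's.

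However, the self-contained argument you sketch for the membership $1\in L_{-1}L_1$ does not work as written, and neither of your two proposed patches repairs it. First, if $u$ lies on a cycle of length $\ell$ and the closed paths at $u$ all have lengths in $\ell\mathbb N$, then replacing $\mu$ by $\mu$ followed by $j$ full turns of the cycle changes $|\mu|$ to $|\mu|+j\ell$, so $|\mu|+1$ is unchanged modulo $\ell$; lengthening by a \emph{multiple} of the cycle can never make $|\mu|+1$ a realizable closed-path length if it was not already. (What can work is extending $\mu$ \emph{partway} around the cycle so that the endpoint and the residue both change, but that is a different argument and forces you to track the branches created by CK-2 along the way.) Second, the uniform-length decomposition $v=\sum_{|\mu|=k,\,s(\mu)=v}\mu\mu^*$ cannot in general be arranged so that every $\mu$ terminates on a cycle: take $v\to u$ with a loop at $u$, an edge $u\to w$, an edge $w\to x$, and a loop at $x$. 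This quiver is finite with no sinks, $w$ lies on no cycle, and for every $k\ge 2$ the path that circles the loop at $u$ exactly $k-2$ times and then exits to $w$ has length $k$ and terminates at $w$. So no choice of $k$ achieves what you claim; one must instead use a prefix-free decomposition with paths of varying lengths, or handle non-sources separately via $v=e^*e$ for $r(e)=v$ and reserve the cycle argument for sources. Since you explicitly allow yourself to cite the standard result instead, this does not sink the proof, but the direct argument as stated is not salvageable without reworking those two steps.
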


\begin{proof}
It is known that for a finite quiver $Q$, the Leavitt path algebra $L_{\mathbb K}(Q)$ is strongly graded if and only if $Q$ does not have a sink \cite{Hazrat}. As we have already observed that a quiver $Q$ has no sink if and only if its Kronecker square $\widehat{Q}$ has no sink. Thus it follows that $L_{\mathbb K}(\widehat{Q})$ is strongly graded if and only if $L_{\mathbb K}(Q)$ is strongly graded. The equivalence of categories in (3) and (4) then follows from Dade's Theorem. 
\end{proof}

The graded K-theory of Leavitt path algebras has been studied extensively in recent years. Let us briefly recall the basics of graded K-theory. Let $R$ be a graded unital ring and let $\nu^{gr}(R)$ denote the monoid of graded isomorphism classes of graded finitely generated right projective modules over $R$ with direct sum as the addition operation. For a graded finitely generated projective $R$-module $P$, we denote the graded isomorphism class of $P$ by $[P]$. For $[P], [Q] \in \nu^{gr}(R)$, we have $[P]+[Q]=[P\oplus Q]$. The graded Grothendieck group, $K_0^{gr}(R)$, is defined as the group completion of the monoid $\nu^{gr}(R)$. 

The next theorem shows that the graded Grothendieck group completely characterizes the quotient category of Hayashi's face algebra.   

\begin{theorem}
Let $Q$ and $Q'$ be finite quivers with no sinks. Then there is an ordered abelian group isomorphism $K_0^{gr} (\mathcal{A}_{\mathbb K}(Q)) \cong K_0^{gr} (\mathcal{A}_{\mathbb K}(Q'))$ if and only if there is a category equivalence $\QGr\mathcal H_{\mathbb K}(Q) \equiv \QGr\mathcal H_{\mathbb K}(Q')$. 
\end{theorem}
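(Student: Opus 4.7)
The plan is to reduce the statement, step by step, to a known classification theorem for Leavitt path algebras of finite graphs with no sinks, by using the two algebra identifications already established. By Theorem~\ref{t1} we have $\mathcal{A}_{\mathbb K}(Q)\cong L_{\mathbb K}(\widehat{Q})$, and by the Calder\'on--Walton identification recalled in the introduction we have $\mathcal{H}_{\mathbb K}(Q)\cong \mathbb K\widehat{Q}$. Under these two identifications the left-hand side of the biconditional turns into an order isomorphism $K_0^{gr}(L_{\mathbb K}(\widehat{Q}))\cong K_0^{gr}(L_{\mathbb K}(\widehat{Q'}))$, and the right-hand side turns into $\QGr \mathbb K\widehat{Q}\equiv \QGr \mathbb K\widehat{Q'}$.

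Next, I would apply Smith's theorem on the right-hand side. Since $Q$ has no sinks, the combinatorial observation~(5) of Section~2 yields that $\widehat{Q}$ has no sinks, and the lemma stated just above gives $(\widehat{Q})^0=\widehat{Q^0}=\widehat{Q}$. Smith's theorem then supplies the equivalence $\QGr\mathbb K\widehat{Q}\equiv \Gr L_{\mathbb K}(\widehat{Q})$, and similarly for $Q'$. At this stage the theorem reduces to the following statement: for two finite sink-free quivers $\widehat{Q}$ and $\widehat{Q'}$, the ordered groups $K_0^{gr}(L_{\mathbb K}(\widehat{Q}))$ and $K_0^{gr}(L_{\mathbb K}(\widehat{Q'}))$ are isomorphic if and only if the graded module categories $\Gr L_{\mathbb K}(\widehat{Q})$ and $\Gr L_{\mathbb K}(\widehat{Q'})$ are equivalent.

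To close the argument, I would invoke the classification machinery for Leavitt path algebras. Since $\widehat{Q}$ has no sinks, $L_{\mathbb K}(\widehat{Q})$ is strongly graded by Hazrat's theorem, so Dade's theorem supplies $\Gr L_{\mathbb K}(\widehat{Q})\equiv \Mod(L_{\mathbb K}(\widehat{Q}))_0$, and similarly for $Q'$. The degree-zero subalgebra $(L_{\mathbb K}(\widehat{Q}))_0$ is ultramatricial, and Morita equivalence of ultramatricial $\mathbb K$-algebras is detected by the ordered $K_0$, which is the algebraic analogue of Elliott's classification of AF $C^*$-algebras. Combined with the standard identification $K_0((L_{\mathbb K}(\widehat{Q}))_0)\cong K_0^{gr}(L_{\mathbb K}(\widehat{Q}))$ of ordered abelian groups, valid for strongly graded rings, both implications follow.

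The step I expect to be the main obstacle is the careful bookkeeping of the three positive cones involved: the one on $K_0^{gr}(L_{\mathbb K}(\widehat{Q}))$ from graded finitely generated projectives, the one on $K_0((L_{\mathbb K}(\widehat{Q}))_0)$ from ordinary finitely generated projectives, and the one implicit in the Dade--Morita equivalence. Once these are shown to coincide, the theorem becomes a direct consequence of the classical classification results, so the substantive work lies in verifying that the order structures match rather than in producing the functors themselves.
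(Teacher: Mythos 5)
Your overall route is the one the paper takes: identify $\mathcal{A}_{\mathbb K}(Q)$ with $L_{\mathbb K}(\widehat{Q})$ via Theorem~\ref{t1} and $\mathcal{H}_{\mathbb K}(Q)$ with $\mathbb K\widehat{Q}$, pass through Smith's equivalence, use that $\widehat{Q}$ is sink-free to get that $L_{\mathbb K}(\widehat{Q})$ is strongly graded, apply Dade, and finish with the ordered-$K_0$ classification of ultramatricial algebras up to Morita equivalence. The cone-matching you flag at the end is exactly the content of the standard ordered isomorphism $K_0^{gr}(R)\cong K_0(R_0)$ for strongly graded $R$, which the paper also invokes without further elaboration, so that is not where the difficulty lies.

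There is, however, one genuine gap: your justification of $\QGr\mathbb K\widehat{Q}\equiv\Gr L_{\mathbb K}(\widehat{Q})$. Smith's theorem, as recalled in the paper, passes to the quiver obtained by repeatedly deleting \emph{both} sinks and sources, whereas the lemma $(\widehat{Q})^0=\widehat{Q^0}$ you invoke concerns sink removal only. A finite quiver with no sinks may perfectly well have sources (for instance a vertex feeding into a loop), in which case $\widehat{Q}$ has sources too, the quiver appearing in Smith's equivalence is a proper subquiver of $\widehat{Q}$, and the identification $(\widehat{Q})^0=\widehat{Q}$ fails. The equivalence you want is still true, but it requires the extra input the paper supplies: repeatedly deleting sources yields source-free quivers $E$, $E'$ with $\Mod L_{\mathbb K}(\widehat{Q})\equiv_{gr}\Mod L_{\mathbb K}(\widehat{E})$ (again by Smith), whence $\Mod (L_{\mathbb K}(\widehat{Q}))_0\equiv\Mod (L_{\mathbb K}(\widehat{E}))_0\equiv\QGr\mathbb K\widehat{Q}$, and similarly for $Q'$. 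With that step inserted, your argument closes and coincides with the paper's.
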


\begin{proof}
Since $Q$ and $Q'$ are quivers without sinks, the Kronecker squares $\widehat{Q}$ and $\widehat{Q'}$ have no sinks as well. Thus by Dade's Theorem it follows that $L_{\mathbb K}(\widehat{Q})$ and $L_{\mathbb K}(\widehat{Q'})$ are strongly graded. Consequently, as ordered abelian groups, $K_0^{gr} (L_{\mathbb K}(\widehat{Q})) \cong K_0^{gr} (L_{\mathbb K}(\widehat{Q'}))$ if and only if $K_0(L_{\mathbb K}(\widehat{Q})_0) \cong K_0(L_{\mathbb K}(\widehat{Q'})_0)$. Since $L_{\mathbb K}(\widehat{Q})_0$ and $L_{\mathbb K}(\widehat{Q'})_0$ are ultramatricial algebras, $K_0(L_{\mathbb K}(\widehat{Q})_0) \cong K_0(L_{\mathbb K}(\widehat{Q'})_0)$ as ordered abelian groups if and only if $L_{\mathbb K}(\widehat{Q})_0$ is Morita equivalent to $L_{\mathbb K}(\widehat{Q'})_0$. It is known that by repeatedly removing all the sources and sinks from $Q$ and $Q'$, we obtain new quivers $E$ and $E'$ without sources and sinks and hence the Kronecker squares $\widehat{E}$ and $\widehat{E'}$ are quivers without sources and sinks as well. By Smith \cite[Theorem 1.3]{Smith}, we know that $\Mod L_{\mathbb K}(\widehat{Q}) \equiv_{gr} \Mod L_{\mathbb K}(\widehat{E})$ and $\Mod L_{\mathbb K}(\widehat{Q'}) \equiv_{gr} \Mod L_{\mathbb K}(\widehat{E'})$. Consequently, we have $\Mod L_{\mathbb K}(\widehat{Q})_0 \equiv \Mod L_{\mathbb K}(\widehat{E})_0$ and $\Mod L_{\mathbb K}(\widehat{Q'})_0 \equiv \Mod L_{\mathbb K}(\widehat{E'})_0$. By Smith \cite[Theorem 1.3]{Smith}, we know that $\QGr \mathbb K \widehat{Q} \equiv \Mod L_{\mathbb K}(\widehat{E})_0$ and $\QGr \mathbb K \widehat{Q'} \equiv \Mod L_{\mathbb K}(\widehat{E'})_0$. Since $\mathbb K \widehat{Q} \cong \mathcal H_{\mathbb K}(Q)$ and $\mathbb K \widehat{Q'} \cong \mathcal H_{\mathbb K}(Q')$, it follows that $\QGr \mathbb K \widehat{Q} \equiv \QGr \mathcal H_{\mathbb K}(Q)$ and $\QGr \mathbb K \widehat{Q'} \equiv \QGr \mathcal H_{\mathbb K}(Q')$. Consequently, in view of the Theorem \ref{t1}, we have $K_0^{gr} (\mathcal{A}_{\mathbb K}(Q)) \cong K_0^{gr} (\mathcal{A}_{\mathbb K}(Q'))$ if and only if $\QGr\mathcal H_{\mathbb K}(Q) \equiv \QGr\mathcal H_{\mathbb K}(Q')$.           
\end{proof}

Two nonnegative integer matrices $A$ and $B$ are called \textit{shift equivalent} if there exist nonnegative integer matrices $R$ and $S$ such that $A^l=RS$ and $B^l=SR$, for some $l \in \mathbb N$, and $AR=RB$ and $SA=BS$. In the lemma below we show that if two matrices are shift equivalent then their Kronecker squares are shift equivalent too. 

 \begin{lemma} \label{se}
If the matrices $A$ and $B$ are shift equivalent then the Kronecker squares $A\otimes A$ and $B\otimes B$ are shift equivalent too.
\end{lemma}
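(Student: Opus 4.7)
The plan is to lift the shift equivalence data witnessing $A \sim B$ to shift equivalence data witnessing $A\otimes A \sim B\otimes B$ using the mixed product property of Kronecker products, namely $(MN)\otimes(PQ)=(M\otimes P)(N\otimes Q)$ whenever the dimensions are compatible. This property is the only tool needed, and it turns all four defining identities of shift equivalence into their Kronecker-squared analogues essentially for free.

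Concretely, suppose we are given nonnegative integer matrices $R,S$ and $l\in\N$ with $A^l = RS$, $B^l=SR$, $AR=RB$ and $SA=BS$. I would set $R' := R\otimes R$ and $S' := S\otimes S$ and claim these witness the shift equivalence of $A\otimes A$ and $B\otimes B$ at the same exponent $l$. First, $R'$ and $S'$ are automatically nonnegative integer matrices because the Kronecker product of two such matrices is such a matrix.

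Next, I would verify the four conditions in order: $(A\otimes A)^l = A^l\otimes A^l = (RS)\otimes(RS) = (R\otimes R)(S\otimes S) = R'S'$; symmetrically, $(B\otimes B)^l = (SR)\otimes(SR) = S'R'$; for the intertwining relations, $(A\otimes A)R' = (AR)\otimes(AR) = (RB)\otimes(RB) = R'(B\otimes B)$, and dually $S'(A\otimes A) = (SA)\otimes(SA) = (BS)\otimes(BS) = (B\otimes B)S'$. Each identity is a single application of the mixed product property combined with the hypothesis.

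There is no real obstacle to this argument; the subtlest point is just bookkeeping on matrix sizes, but these are forced by the shift equivalence of $A$ and $B$ (so that all the Kronecker products and matrix products above are defined). Hence the lemma follows without further ado.
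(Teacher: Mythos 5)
Your proposal is correct and is essentially identical to the paper's own proof: both take $R' = R\otimes R$, $S' = S\otimes S$ and verify the four shift-equivalence identities via the mixed product property $(MN)\otimes(PQ)=(M\otimes P)(N\otimes Q)$. No gaps.
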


\begin{proof}
Suppose the matrices $A$ and $B$ are shift equivalent. Then there exist nonnegative matrices $R$ and $S$ such that $A^l=RS$ and $B^l=SR$, for some $l \in \mathbb N$, and $AR=RB$ and $SA=BS$. Now $(A\otimes A)^l=A^l \otimes A^l =RS \otimes RS=(R\otimes R)(S\otimes S)$. Take $R_1=R\otimes R$ and $S_1=S\otimes S$. Then $(A\otimes A)^l=R_1 S_1$ and $(B\otimes B)^l=B^l \otimes B^l=SR\otimes SR=(S\otimes S)(R\otimes R)=S_1 R_1$. Also, note that $(A\otimes A)R_1=(A\otimes A)(R\otimes R)=AR \otimes AR=RB \otimes RB=(R\otimes R)(B\otimes B)=R_1(B\otimes B)$ and $S_1(A\otimes A)=(S\otimes S)(A\otimes A)=SA\otimes SA=BS\otimes BS=(B\otimes B)(S\otimes S)=(B\otimes B)S_1$. This shows that $A\otimes A$ and $B\otimes B$ are shift equivalent.
\end{proof}

Hazrat conjectured the following.

\begin{conjecture}
Let $Q$ and $Q'$ be finite quivers with no sinks. Then the following are equivalent:
\begin{enumerate}
\item The Leavitt path algebras $L_{\mathbb K}(Q)$ and $L_{\mathbb K}(Q')$ are graded Morita equivalent.
\item There is an order preserving $\mathbb Z[x, x^{-1}]$-module isomorphism between $K_0^{gr} (L_{\mathbb K}(Q))$ and $K_0^{gr} (L_{\mathbb K}(Q'))$.
\item The adjacency matrices of $Q$ and $Q'$ are shift equivalent.
\end{enumerate}
\end{conjecture}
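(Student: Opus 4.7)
The plan is to establish the chain $(1)\Longrightarrow(2)$, $(3)\Longrightarrow(2)$, and $(2)\Longrightarrow(1)$ first, and then to confront the hard implication $(2)\Longrightarrow(3)$ separately. For $(1)\Longrightarrow(2)$, a graded Morita equivalence is implemented by a graded bimodule progenerator, so the induced equivalence of graded module categories commutes with the degree-shift functor. Since the $\Z[x,x^{-1}]$-action on $K_0^{gr}$ is, by definition, the action of this shift, the induced isomorphism is automatically $\Z[x,x^{-1}]$-linear, and it preserves the positive cone because it sends graded finitely generated projective classes to the same. This is the formal direction.

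For $(3)\Longrightarrow(2)$, I would use the standard presentation (due to Ara--Brustenga--Corti\~nas and Hazrat) of the graded Grothendieck group of a Leavitt path algebra of a finite quiver with no sinks,
\begin{equation*}
K_0^{gr}(L_\bK(Q))\cong \operatorname{coker}\bigl(I-xA_Q^{\,t}\colon \Z[x,x^{-1}]^{|Q_0|}\to \Z[x,x^{-1}]^{|Q_0|}\bigr),
\end{equation*}
with positive cone generated by the images of the standard basis vectors. A shift equivalence $(R,S,l)$ between $A_Q$ and $A_{Q'}$ gives, via $R^{\,t}$ and $S^{\,t}$, $\Z[x,x^{-1}]$-linear maps between the two cokernels whose composites reduce to the identity after using the defining relation $xA_Q^{\,t}\equiv I$ (and symmetrically for $A_{Q'}$), and which manifestly preserve the positive cone since $R$ and $S$ have nonnegative integer entries.

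For $(2)\Longrightarrow(1)$, I would invoke the earlier Proposition to conclude that, under the no-sinks hypothesis on $Q$ and $Q'$, both $L_\bK(Q)$ and $L_\bK(Q')$ are strongly $\Z$-graded. Dade's Theorem then translates graded Morita equivalence of $L_\bK(Q)$ and $L_\bK(Q')$ into ordinary Morita equivalence of the zero components $L_\bK(Q)_0$ and $L_\bK(Q')_0$. These zero components are ultramatricial $\bK$-algebras, so by Elliott's classification an ordered $K_0$ isomorphism between them can be realized by a Morita equivalence. The key point is that the $\Z[x,x^{-1}]$-linearity of the $K_0^{gr}$-isomorphism forces the resulting bimodule to intertwine the shifts, and hence upgrades the Morita equivalence of the zero components to a graded Morita equivalence of $L_\bK(Q)$ and $L_\bK(Q')$.

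The principal obstacle is $(2)\Longrightarrow(3)$: I must convert an abstract order-preserving $\Z[x,x^{-1}]$-module isomorphism between the two cokernels into actual nonnegative integer matrices $R,S$ satisfying the four shift-equivalence identities $A_Q^l=RS$, $A_{Q'}^l=SR$, $A_QR=RA_{Q'}$, $SA_Q=A_{Q'}S$. This is the algebraic counterpart of Williams' rigidification problem in symbolic dynamics. A plausible strategy is to trace the isomorphism on the distinguished generators indexed by the vertices, use order-preservation to force a sufficiently high $x$-power of each image to lie in the positive cone, and then clear denominators to produce candidate intertwiners $R,S$. However, guaranteeing that this extraction yields \emph{integer, nonnegative, and simultaneously compatible} matrices rather than merely $\Z[x,x^{-1}]$-linear ones appears to require a substantially new input beyond purely categorical manipulation, which is why this direction of the conjecture is the one that resists proof and must be singled out as the main difficulty.
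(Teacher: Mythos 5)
This statement is presented in the paper as a conjecture (it is Hazrat's graded classification conjecture), not as a theorem: immediately after stating it the paper records that $(1)\Rightarrow(3)$ is known, that $(2)\Leftrightarrow(3)$ is known (this is essentially Krieger--Wagoner dimension-group theory in the form of \cite[Corollary 12]{Hazrat}, which the paper then uses repeatedly), and that $(3)\Rightarrow(1)$ is still open. So there is no proof in the paper to compare against, and a purported complete proof should be treated with suspicion from the outset.

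Your proposal has the status of the implications reversed, and the step you claim to complete is exactly the open one. First, $(2)\Rightarrow(3)$, which you single out as ``the principal obstacle,'' is in fact a known theorem: the ordered $\Z[x,x^{-1}]$-module $K_0^{gr}(L_{\mathbb K}(Q))$ is precisely Krieger's dimension triple of the shift of finite type associated to $A_Q$, and the Krieger--Wagoner theory shows that isomorphism of dimension triples is equivalent to shift equivalence of the matrices; this is the content of the result the paper cites as \cite[Corollary 12]{Hazrat}. Second, and more seriously, your argument for $(2)\Rightarrow(1)$ breaks at the sentence asserting that ``the $\Z[x,x^{-1}]$-linearity of the $K_0^{gr}$-isomorphism forces the resulting bimodule to intertwine the shifts.'' Dade's theorem and Elliott's classification of ultramatricial algebras do let you realize the ordered isomorphism $K_0(L_{\mathbb K}(Q)_0)\cong K_0(L_{\mathbb K}(Q')_0)$ by \emph{some} Morita equivalence of the zero components, but Elliott's theorem gives no control over how that equivalence interacts with the distinguished shift bimodules $L_{\mathbb K}(Q)_1$ and $L_{\mathbb K}(Q')_1$: the $\Z[x,x^{-1}]$-linearity only says that the two shifts induce the same map on $K_0$, not that the lifted equivalence conjugates one shift bimodule into the other. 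Producing a lift that does intertwine the shifts is the ``fullness'' problem for the graded $K_0$ functor, and it is precisely why the statement remains a conjecture. Your arguments for $(1)\Rightarrow(2)$ and $(3)\Rightarrow(2)$ are fine, but those are the easy, known directions.
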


\noindent It is known that (1) implies (3) and it is also known that (2) and (3) are equivalent. But it is still open whether (3) implies (1).

As a consequence of Lemma \ref{se}, we have the following.

\begin{corollary}
If the adjacency matrices of quivers $Q$ and $Q'$ are shift equivalent, then the adjacency matrices of their Kronecker squares $\widehat{Q}$ and $\widehat{Q'}$ are also shift equivalent.
\end{corollary}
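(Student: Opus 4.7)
The plan is to derive this immediately from Lemma \ref{se} by unwinding the definition of the Kronecker square of a quiver. Recall from Section~2 that the Kronecker product of quivers $Q_1$ and $Q_2$ is defined (following Weichsel) precisely so that its adjacency matrix is the Kronecker product of the adjacency matrices $A_{Q_1}$ and $A_{Q_2}$; in particular, $A_{\widehat{Q}} = A_Q \otimes A_Q$ and $A_{\widehat{Q'}} = A_{Q'} \otimes A_{Q'}$. So the corollary is literally the conclusion of Lemma \ref{se} reinterpreted at the level of quivers.

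Concretely, the steps are as follows. First, I would record the identification $A_{\widehat{Q}} = A_Q \otimes A_Q$ (and similarly for $Q'$), citing the definition of the Kronecker product of quivers given at the start of Section~2. Second, I would apply Lemma \ref{se} with $A := A_Q$ and $B := A_{Q'}$: the hypothesis of the corollary is exactly the hypothesis of the lemma, and the conclusion of the lemma says $A_Q \otimes A_Q$ and $A_{Q'} \otimes A_{Q'}$ are shift equivalent. Third, translating that back via the identification in the first step yields the stated shift equivalence between $A_{\widehat{Q}}$ and $A_{\widehat{Q'}}$.

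There is essentially no obstacle; the only thing worth flagging is that the definition of Kronecker square used throughout the paper is the one whose adjacency matrix coincides (on the nose, not just up to some reindexing) with the Kronecker product of matrices. Once that identification is stated, the proof is a single sentence invoking the preceding lemma.
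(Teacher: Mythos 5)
Your proposal is correct and matches the paper's treatment exactly: the paper states this corollary as an immediate consequence of Lemma \ref{se}, relying on the defining identification $A_{\widehat{Q}} = A_Q \otimes A_Q$ from Section~2, which is precisely the one-line argument you give.
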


Using Hazrat's result \cite[Corollary 12]{Hazrat}, we can prove the following for finite quivers with no sinks.

\begin{theorem}
Let $Q$ and $Q'$ be finite quivers with no sinks. If $L_{\mathbb K}(Q)$ and $L_{\mathbb K}(Q')$ are graded Morita equivalent, then the adjacency matrices of $\widehat{Q}$ and $\widehat{Q'}$ are shift equivalent.
\end{theorem}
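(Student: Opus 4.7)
The plan is to chain together two results that are already in hand: Hazrat's Corollary 12, which gives the implication (1)$\Rightarrow$(3) of the conjecture preceding the theorem, and Lemma \ref{se}, which says shift equivalence is preserved under the Kronecker square operation on nonnegative integer matrices.

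First I would invoke \cite[Corollary 12]{Hazrat}: since $Q$ and $Q'$ are finite quivers with no sinks and $L_{\mathbb K}(Q)$ and $L_{\mathbb K}(Q')$ are assumed to be graded Morita equivalent, their adjacency matrices $A_Q$ and $A_{Q'}$ are shift equivalent. This is precisely the ``(1) implies (3)" direction that is known in the conjecture stated just before the theorem.

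Next I would recall from Section~2 that the adjacency matrix of the Kronecker square satisfies $A_{\widehat{Q}} = A_Q \otimes A_Q$ and $A_{\widehat{Q'}} = A_{Q'} \otimes A_{Q'}$; this is essentially the defining property of $\widehat{Q}$ as recalled from Weichsel's construction. Applying Lemma \ref{se} to the shift equivalent pair $A_Q$ and $A_{Q'}$ immediately yields that $A_Q \otimes A_Q$ and $A_{Q'}\otimes A_{Q'}$ are shift equivalent. Substituting the identifications above gives that $A_{\widehat{Q}}$ and $A_{\widehat{Q'}}$ are shift equivalent, which is the conclusion.

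There is no real obstacle here; the proof is essentially a two-line application of the two cited facts. The only subtlety worth flagging explicitly is that one must verify that the hypotheses of Hazrat's corollary are satisfied (finiteness and absence of sinks), both of which are part of the theorem's assumptions, and that the ``adjacency matrix of Kronecker square equals Kronecker product of adjacency matrices" identity is used in the final step, so the proof should state this identification to make the transition from Lemma \ref{se} to the conclusion fully transparent.
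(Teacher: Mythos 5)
Your proposal is correct and matches the paper's proof essentially verbatim: the paper also invokes \cite[Corollary 12]{Hazrat} to get shift equivalence of $A_Q$ and $A_{Q'}$, and then applies the corollary of Lemma \ref{se} (which is exactly the combination of Lemma \ref{se} with the identification $A_{\widehat{Q}}=A_Q\otimes A_Q$) to conclude. Your explicit flagging of the adjacency-matrix identification is a minor presentational improvement but not a different argument.
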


\begin{proof}
Let $Q$ and $Q'$ be finite quivers with no sinks. Suppose $L_{\mathbb K}(Q)$ and $L_{\mathbb K}(Q')$ are graded Morita equivalent. Then by \cite[Corollary 12]{Hazrat}, we know that the adjacency matrices of $Q$ and $Q'$ are shift equivalent and hence by the above corollary, we have that the adjacency matrices of their Kronecker squares $\widehat{Q}$ and $\widehat{Q'}$ are also shift equivalent.
\end{proof}

\begin{remark}
If Hazrat's conjecture turns out to be true, then this means that for finite quivers $Q$ and $Q'$ without sinks, if $L_{\mathbb K}(Q)$ and $L_{\mathbb K}(Q')$ are graded Morita equivalent, then the $L_{\mathbb K}(\widehat{Q})$ and $L_{\mathbb K}(\widehat{Q'})$ are graded Morita equivalent too. 
\end{remark}

Using Hazrat's result and our Lemma \ref{se}, we can prove the following for finite quivers without sinks. 

\begin{theorem}
Let $Q$ and $Q'$ be finite quivers with no sinks. If there is an order preserving $\mathbb Z[x, x^{-1}]$-module isomorphism between $K_0^{gr} (L_{\mathbb K}(Q))$ and $K_0^{gr} (L_{\mathbb K}(Q'))$, then there is an order preserving $\mathbb Z[x, x^{-1}]$-module isomorphism between $K_0^{gr} (L_{\mathbb K}(\widehat{Q}))$ and $K_0^{gr} (L_{\mathbb K}(\widehat{Q'}))$ as well. 
\end{theorem}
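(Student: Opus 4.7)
The plan is to chain together three facts already established or cited in the paper: the equivalence (2)$\iff$(3) in Hazrat's conjecture (which is stated as known), Lemma \ref{se} about shift equivalence of Kronecker squares, and the earlier observation that a finite quiver $Q$ has no sink if and only if $\widehat{Q}$ has no sink.

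First, I would start from the hypothesis: suppose there is an order preserving $\mathbb Z[x,x^{-1}]$-module isomorphism between $K_0^{gr}(L_{\mathbb K}(Q))$ and $K_0^{gr}(L_{\mathbb K}(Q'))$. Since $Q$ and $Q'$ are finite quivers without sinks, the implication (2)$\Rightarrow$(3) in Hazrat's conjecture (which the excerpt cites as known) applies, and we conclude that the adjacency matrices $A_Q$ and $A_{Q'}$ are shift equivalent.

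Next, I would invoke Lemma \ref{se}: from the shift equivalence of $A_Q$ and $A_{Q'}$, it follows immediately that the Kronecker squares $A_Q\otimes A_Q=A_{\widehat Q}$ and $A_{Q'}\otimes A_{Q'}=A_{\widehat{Q'}}$ are shift equivalent as well. Here I am using the identification of the adjacency matrix of $\widehat{Q}$ with the Kronecker product $A_Q\otimes A_Q$, which was recorded in Section 2.

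Finally, since $Q$ and $Q'$ have no sinks, the observation from item (5) of Section 2 (and also used in the proposition on strongly graded Leavitt path algebras earlier) gives that $\widehat{Q}$ and $\widehat{Q'}$ have no sinks either. Thus the quivers $\widehat{Q}$ and $\widehat{Q'}$ satisfy the hypotheses needed to apply the reverse implication (3)$\Rightarrow$(2) of Hazrat's conjecture, yielding an order preserving $\mathbb Z[x,x^{-1}]$-module isomorphism $K_0^{gr}(L_{\mathbb K}(\widehat{Q}))\cong K_0^{gr}(L_{\mathbb K}(\widehat{Q'}))$, as desired. There is no real obstacle here: the work has already been done in Lemma \ref{se} and in the known equivalence (2)$\iff$(3) of Hazrat's conjecture, and the proof is a direct three-line concatenation of these two facts once one checks that the "no sinks" hypothesis survives passage to the Kronecker square.
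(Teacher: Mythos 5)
Your proposal is correct and follows essentially the same route as the paper's proof: apply the known equivalence between graded $K_0^{gr}$ isomorphism and shift equivalence of adjacency matrices (Hazrat's Corollary 12) in one direction, pass to Kronecker squares via Lemma \ref{se}, and apply the equivalence in the reverse direction. Your explicit check that the ``no sinks'' hypothesis survives passage to the Kronecker square is a point of care the paper leaves implicit, but it does not change the argument.
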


\begin{proof}
Let $Q$ and $Q'$ be finite quivers with no sinks. Suppose there is an order preserving $\mathbb Z[x, x^{-1}]$-module isomorphism between $K_0^{gr} (L_{\mathbb K}(Q))$ and $K_0^{gr} (L_{\mathbb K}(Q'))$. By \cite[Corollary 12]{Hazrat}, the adjacency matrices $A_Q$ and $A_{Q'}$ are shift equivalent. By Lemma \ref{se}, the matrices $A_Q \otimes A_Q$ and $A_{Q'} \otimes A_{Q'}$ are shift equivalent. Since the matrices $A_Q \otimes A_Q$ and $A_{Q'} \otimes A_{Q'}$ are adjacency matrices of quivers $\widehat{Q}$ and $\widehat{Q'}$, respectively, it follows from \cite[Corollary 12]{Hazrat}, that there is an order preserving $\mathbb Z[x, x^{-1}]$-module isomorphism between $K_0^{gr} (L_{\mathbb K}(\widehat{Q}))$ and $K_0^{gr} (L_{\mathbb K}(\widehat{Q'}))$.
\end{proof}

We can rephrase \cite[Theorem 26]{Hazrat} as follows.

\begin{theorem}
Let $Q$ be a finite quiver with no sink. Then there is an order preserving $\mathbb Z[x, x^{-1}]$-module isomorphism $$ K_0^{gr} (L_{\mathbb K}(\widehat{Q})) \cong K_0^{gr} (L_{\mathbb K}(Q)) \otimes_{\mathbb Z} K_0^{gr} (L_{\mathbb K}(Q)).$$
\end{theorem}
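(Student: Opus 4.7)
The plan is to deduce the statement directly from Hazrat's explicit description of the graded Grothendieck group of a Leavitt path algebra, applied in parallel to $Q$ and to $\widehat{Q}$. By Hazrat's Theorem 26, for a finite quiver $E$ with no sinks, $K_0^{gr}(L_{\mathbb K}(E))$ is isomorphic as an ordered $\mathbb Z[x,x^{-1}]$-module to the direct limit $\varinjlim (\mathbb Z^{|E_0|}, A_E^t)$, where both the transition maps and the action of $x$ are given by multiplication by the transpose of the adjacency matrix $A_E$, and where the positive cone is generated by the images of the canonical basis vectors.

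First I would record two ingredients that were already established in the paper: the adjacency matrix of the Kronecker square satisfies $A_{\widehat{Q}} = A_Q \otimes A_Q$ (Section 2), and $\widehat{Q}$ has no sinks whenever $Q$ has no sinks (property (5) in Section 2). Applying Hazrat's theorem to the quiver $\widehat{Q}$ then gives
\[
K_0^{gr}\bigl(L_{\mathbb K}(\widehat{Q})\bigr) \;\cong\; \varinjlim\bigl(\mathbb Z^{|Q_0|^2},\,(A_Q\otimes A_Q)^t\bigr) \;=\; \varinjlim\bigl(\mathbb Z^{|Q_0|}\otimes\mathbb Z^{|Q_0|},\,A_Q^t\otimes A_Q^t\bigr),
\]
using $(A\otimes B)^t = A^t\otimes B^t$ and the identification $\mathbb Z^{mn}\cong \mathbb Z^m\otimes \mathbb Z^n$.

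The core step is then to identify this last direct limit with the tensor product of two copies of the direct limit for $Q$. Because tensor product over $\mathbb Z$ is exact and commutes with directed colimits, and because $(A_Q^t\otimes A_Q^t)(v\otimes w) = (A_Q^t v)\otimes (A_Q^t w)$, the natural map
\[
\varinjlim\bigl(\mathbb Z^{|Q_0|}, A_Q^t\bigr)\;\otimes\;\varinjlim\bigl(\mathbb Z^{|Q_0|}, A_Q^t\bigr) \;\longrightarrow\; \varinjlim\bigl(\mathbb Z^{|Q_0|}\otimes\mathbb Z^{|Q_0|},\, A_Q^t\otimes A_Q^t\bigr)
\]
is an isomorphism of abelian groups. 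One then applies Hazrat's theorem to $Q$ on the left-hand side to obtain $K_0^{gr}(L_{\mathbb K}(Q))\otimes K_0^{gr}(L_{\mathbb K}(Q))$.

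The remaining verifications are bookkeeping: the $\mathbb Z[x,x^{-1}]$-action matches because $x$ acts as $A_Q^t\otimes A_Q^t$ on both sides (diagonally on the tensor product, via the adjacency matrix on the colimit side); and the positive cones agree because Kronecker products of nonnegative integer vectors are nonnegative, and the canonical basis of $\mathbb Z^{|Q_0|^2}$ is precisely the set of tensors $e_i\otimes e_j$ of canonical basis vectors, corresponding to the vertices $[v_i, v_j]\in \widehat{Q}_0$. The main (mild) obstacle is making sure the order-isomorphism on positive cones is preserved in the colimit passage; this reduces to the observation that every positive element on either side is the image of a nonnegative integer combination of basis tensors at some finite stage, which is immediate from the explicit form of the colimit.
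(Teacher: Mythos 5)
Your argument is essentially correct, but it takes a genuinely different route from the paper: the paper offers no proof at all, stating the result as a direct rephrasing of Hazrat's Theorem~26 (which already asserts $K_0^{gr}(L_{\mathbb K}(E\times F))\cong K_0^{gr}(L_{\mathbb K}(E))\otimes K_0^{gr}(L_{\mathbb K}(F))$ for finite sink-free quivers, specialized here to $E=F=Q$), whereas you rederive the statement from the dimension-group description $K_0^{gr}(L_{\mathbb K}(E))\cong\varinjlim(\mathbb Z^{|E_0|},A_E^t)$ together with $A_{\widehat Q}=A_Q\otimes A_Q$, the cofinality of the diagonal in $\mathbb N\times\mathbb N$, and the exactness of $\otimes_{\mathbb Z}$. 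That makes your version self-contained modulo the colimit description, which is a real gain; the cost is only that you must be careful about two points you do in fact address: (i) the tensor product must be taken over $\mathbb Z$ with $x$ acting diagonally as $A_Q^t\otimes A_Q^t$ (taking $\otimes_{\mathbb Z[x,x^{-1}]}$ would give the wrong $x$-action, as the rose $R_2$ already shows, where $x$ must act by $4$ on $K_0^{gr}(L_{\mathbb K}(R_4))\cong\mathbb Z[1/2]$, not by $2$), and (ii) the positive cone on the tensor product is the one generated by tensors of positives, which matches the image of $\mathbb N^{|Q_0|^2}$ under the identification $e_i\otimes e_j\leftrightarrow e_{[v_i,v_j]}$. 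One small citation quibble: the direct-limit description of $K_0^{gr}$ that you invoke is not itself Theorem~26 of Hazrat's paper (that number is the product formula you are proving); it appears earlier in his work on the graded structure of Leavitt path algebras, so you should cite it accordingly.
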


\section{Direct limits and Kronecker square}

\noindent Let us first recall the basics of Cuntz-Krieger quiver category from \cite{Goodearl}. Let $E=(E_0, E_1, s_E, r_E)$ and $F=(F_0, F_1, s_F, r_F)$ be two quivers. A quiver morphism from $E$ to $F$ is a pair $\varphi=(\varphi_0, \varphi_1)$ which consists of two maps $\varphi_0: E_0\rightarrow F_0$ and $\varphi_1: E_1\rightarrow F_1$ such that $s_F\varphi_1=\varphi_0s_E$ and $r_F\varphi_1=\varphi_0r_E$. We denote by $\bf Quiv$ the category of quivers where the objects are arbitrary quivers and the morphisms are quiver morphisms. We say that a quiver morphism $\varphi:E\rightarrow F$ is a $CK$-morphism if both $\varphi_0$ and $\varphi_1$ are injective and for each $v\in E_0$ which is neither a sink nor an infinite emitter, $\varphi_1$ induces a bijection $s^{-1}_E(v) \rightarrow s^{-1}_F(\varphi_0(v))$. We define $\bf CKQuiv$ to be the subcategory of $\bf Quiv$ whose objects are arbitrary quivers and morphisms are arbitrary $CK$-morphisms. 

The following is a simple observation.

\begin{lemma}\label{puthat}
    Let $f\colon E \to F$ be a $CK$-morphism in the category $\bf CKQuiv$ with $f=(f_0, f_1)$. Define $\hat{f}\colon \widehat{E} \to \widehat{F}$ with $\hat{f}=(\hat{f_0}, \hat{f_1})$ where $\hat{f_0}:\widehat{E_0}\rightarrow \widehat{F_0}$ given as $\hat{f_0}([v, w])=[{f_0}(v), {f_0}(w)]$ and $\hat{f_1}:\widehat{E_1}\rightarrow \widehat{F_1}$ given as $\hat{f_1}([e_1, e_2])=[{f_1}(e_1), {f_1}(e_2)]$. Then $\hat{f}$ is a $CK$-morphism in the same category.
\end{lemma}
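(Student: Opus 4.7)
The plan is to verify, in order, the three ingredients in the definition of a $CK$-morphism for $\hat{f}$: compatibility with source and range, injectivity of each component $\hat{f}_0$ and $\hat{f}_1$, and the $CK$-bijection $\hat{f}_1\colon s_{\widehat{E}}^{-1}([v,w]) \to s_{\widehat{F}}^{-1}(\hat{f}_0([v,w]))$ at every regular vertex $[v,w]$ of $\widehat{E}$ (that is, one which is neither a sink nor an infinite emitter).

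The first two items are purely formal. For compatibility with source I would just unfold the definitions of $\hat{s}$ and $\hat{f}$:
\[
s_{\widehat{F}}(\hat{f}_1([e,e'])) = [s_F(f_1(e)), s_F(f_1(e'))] = [f_0(s_E(e)), f_0(s_E(e'))] = \hat{f}_0(s_{\widehat{E}}([e,e'])),
\]
using that $f$ itself is a quiver morphism; the computation for range is identical. Injectivity of $\hat{f}_0$ and $\hat{f}_1$ follows immediately from the injectivity of $f_0$ and $f_1$, because two pairs in $\widehat{E}_0$ (respectively in $\widehat{E}_1$) agree precisely when both coordinates do.

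The core step is the $CK$-bijection, and the plan here is to exploit the set-theoretic factorisation
\[
s_{\widehat{E}}^{-1}([v,w]) \;=\; \{[e,e'] : s_E(e)=v,\ s_E(e')=w\} \;=\; s_E^{-1}(v) \times s_E^{-1}(w),
\]
under which the restriction of $\hat{f}_1$ to this fibre is simply $f_1 \times f_1$. From this I read off two things at once: $[v,w]$ is a sink in $\widehat{E}$ iff at least one of $v,w$ is a sink in $E$, and $|s_{\widehat{E}}^{-1}([v,w])| = |s_E^{-1}(v)| \cdot |s_E^{-1}(w)|$, so $[v,w]$ is regular in $\widehat{E}$ precisely when both $v$ and $w$ are regular in $E$. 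For such $[v,w]$, the $CK$-property of $f$ provides bijections $f_1\colon s_E^{-1}(v) \to s_F^{-1}(f_0(v))$ and $f_1\colon s_E^{-1}(w) \to s_F^{-1}(f_0(w))$, whose Cartesian product is exactly the restriction of $\hat{f}_1$ and lands bijectively onto $s_F^{-1}(f_0(v)) \times s_F^{-1}(f_0(w)) = s_{\widehat{F}}^{-1}(\hat{f}_0([v,w]))$.

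I do not foresee any real obstacle: the whole content of the lemma is that source and range on the Kronecker square act coordinatewise, so both the property of being regular and the $CK$-bijection distribute naturally across the two factors of $E_1 \times E_1$.
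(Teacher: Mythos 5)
Your proof is correct, and the paper itself offers no proof to compare against (it states the lemma as ``a simple observation''); your argument supplies exactly the expected details. The key points are all in order: source/range compatibility and injectivity are coordinatewise, and the identification $s_{\widehat{E}}^{-1}([v,w]) = s_E^{-1}(v)\times s_E^{-1}(w)$ correctly shows both that $[v,w]$ is regular in $\widehat{E}$ iff $v$ and $w$ are both regular in $E$ and that the restricted map $f_1\times f_1$ is a bijection onto $s_{\widehat{F}}^{-1}(\hat{f}_0([v,w]))$.
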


\begin{proposition}\label{sombrero}
    Consider a directed set $(I, \le)$ and the direct system of quivers $$(\{E_i\}_{i \in I}, \{e_{ji}\}_{i\le j})$$ in the category $\bf CKQuiv$. If $E=\L E_i$, then $\widehat{E} =\L \widehat{E_i}$.
\end{proposition}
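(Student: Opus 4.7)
The plan is to verify that $\widehat{E}$ together with the induced $CK$-morphisms $\widehat{e_i}\colon \widehat{E_i}\to \widehat{E}$ satisfies the universal property of the direct limit in $\bf CKQuiv$, leveraging Lemma~\ref{puthat} and the fact that filtered direct limits in $\bf Set$ commute with finite products when the transition maps are injective.

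First, Lemma~\ref{puthat} makes $E_i \mapsto \widehat{E_i}$, $e_{ji} \mapsto \widehat{e_{ji}}$ a functor on $\bf CKQuiv$, so we obtain a direct system $(\{\widehat{E_i}\}_{i\in I}, \{\widehat{e_{ji}}\}_{i\le j})$. The canonical morphisms $e_i\colon E_i \to E$ lift to $\widehat{e_i}\colon \widehat{E_i}\to \widehat{E}$ satisfying $\widehat{e_j} \circ \widehat{e_{ji}} = \widehat{e_i}$ for $i \le j$, so $\widehat{E}$ is a cocone over this new system. Because every $CK$-morphism is injective on both vertices and edges, the direct limit in $\bf CKQuiv$ can be computed pointwise as a directed union; in particular $E_0 = \bigcup_{i} e_{i,0}(E_{i,0})$ and $E_1 = \bigcup_{i} e_{i,1}(E_{i,1})$. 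Given any $[v,w]\in \widehat{E}_0 = E_0\times E_0$, directedness of $I$ supplies $k\in I$ with $v$ and $w$ both in the image of $E_{k,0}$, hence $[v,w]$ lies in the image of $(\widehat{E_k})_0$; the analogous statement holds for edges. This identifies $\widehat{E}_0 = \bigcup_i \widehat{e_{i,0}}((\widehat{E_i})_0)$ and $\widehat{E}_1 = \bigcup_i \widehat{e_{i,1}}((\widehat{E_i})_1)$, exhibiting $\widehat{E}$ as the directed union of the subquivers $\widehat{e_i}(\widehat{E_i})$, with the source and range maps inherited from the $\widehat{E_k}$.

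To complete the argument, given any cocone $g_i\colon \widehat{E_i}\to F$ in $\bf CKQuiv$, I would define $g\colon \widehat{E}\to F$ by $g([v,w]) := g_k([v',w'])$ for any $k$ and any preimages $v',w'\in E_{k,0}$ of $v,w$; the cocone compatibility guarantees well-definedness (with uniqueness forced by $g\circ \widehat{e_i} = g_i$), and the $CK$-morphism property of $g$ transfers from the $g_i$ because every regular vertex of $\widehat{E}$ is the image of a regular vertex of some $\widehat{E_k}$ and the $g_k$ are themselves $CK$-morphisms. The main technical point—and really the only obstacle—is the commutation of the filtered colimit with the finite product, namely $\L(E_{i,0}\times E_{i,0}) = (\L E_{i,0}) \times (\L E_{i,0})$, and analogously for edges; this relies crucially on the directedness of $I$, since for a non-directed index set the union $\bigcup_i (A_i \times B_i)$ can be strictly smaller than $(\bigcup_i A_i)\times (\bigcup_i B_i)$.
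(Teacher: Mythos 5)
Your proposal is correct and follows essentially the same route as the paper: both build the direct system $(\{\widehat{E_i}\},\{\widehat{e_{ji}}\})$ via Lemma~\ref{puthat}, use directedness of $I$ to place any pair of vertices (or edges) of $E$ in the image of a single $E_k$, and then verify the universal property by defining the mediating morphism through such a common index and checking well-definedness via injectivity of the canonical maps. Your explicit remark that the heart of the matter is the commutation of filtered colimits with finite products (and your check that the mediating morphism is itself a $CK$-morphism) is a clean way of packaging what the paper does element-by-element, but it is not a genuinely different argument.
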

\begin{proof}
   Let  $(\{E_i\}_{i \in I}, \{e_{ji}\}_{i\le j})$ be a direct system of quivers in the category $\bf CKQuiv$ and suppose $E=\L E_i$, then we have the following commutative diagrams for $i\le j \le k$:
   \begin{center}
\begin{tikzcd}[cramped, column sep=small] 
E_i \arrow{r}{e_{ji}}  \arrow[rd,"e_{ki}"'] 
  & E_j \arrow{d}{e_{kj}} & E_i \arrow{r}{e_{ji}}\arrow[rd,"e_{i}"'] & E_j\arrow{d}{e_{j}}\\
    & E_k &  & E
\end{tikzcd}
\end{center}   
where the map $e_i$'s are  canonical. Using Lemma \ref{puthat}, we can consider the direct system of quivers $(\{\widehat{E_i}\}_{i \in I}, \{\hat{e}_{ji}\}_{i\le j})$, where $\hat{e}_{ji} \colon E_i\times E_i \to E_j \times E_j$ with $\hat{e}_{ji}(u,v)=(e_{ji}(u), e_{ji}(v))$ for all $(u,v) \in {E_{i0}}\times E_{i0}$ and  $\hat{e}_{ji}(f,g)=(e_{ji}(f), e_{ji}(g))$ for all $(f,g) \in E_{i1}\times E_{i1}$ and the following commutative diagrams for $i\le j \le k$:
 
 \begin{center}
\begin{tikzcd}[cramped, column sep=small] 
\widehat{E_i} \arrow{r}{\hat{e}_{ji}}  \arrow[rd,"\hat{e}_{ki}"'] 
  & \widehat{E_j} \arrow{d}{\hat{e}_{kj}} & \widehat{E_i} \arrow{r}{\hat{e}_{ji}}\arrow[rd,"\hat{e}_{i}"'] & \widehat{E_j}\arrow{d}{\hat{e}_{j}}\\
    & \widehat{E_k} &  & \widehat{E}
\end{tikzcd}
\end{center}

Suppose that we have another quiver $X$ and a family of morphism $\{t_i \}_{i\in I}$ with $t_i\colon \widehat{E_i} \to X$  in the category $\bf CKQuiv$, such that the following diagrams are commutative for $i \le j$:

\begin{center}
\begin{tikzcd}[cramped, column sep=small] 
  \widehat{E_i} \arrow{r}{\hat{e}_{ji}}\arrow[rd,"t_{i}"'] & \widehat{E_j}\arrow{d}{t_{j}}\\
    & X
\end{tikzcd}
\end{center}

We wish to prove that there exists a unique morphism $T$, making the following diagrams commutative.

\begin{center}
\begin{tikzcd}[cramped, column sep=small] 
  \widehat{E_i} \arrow{r}{\hat{e}_{i}}\arrow[rd,"t_{i}"'] & \widehat{E}\arrow{d}{T}\\
    & X
\end{tikzcd}
\end{center}

\noindent Consider $[u,v] \in \widehat{E}_0$, then $u, v\in E_0$ and there exist $i,j \in I$ such that $u=e_i(x_i)$ with $x_i \in E_i$ and $v=e_j(x_j)$ with $x_j \in E_j$. Furthermore, there exists $k \in I$ with $k\ge i,j$ and $e_{ki}(x_i), \ e_{kj}(x_j)\in E_k$. So, $e_ke_{ki}(x_i)=e_i(x_i)=u$ and $e_ke_{kj}(x_j)=e_j(x_j)=v$ and denoting by $z_k=e_{ki}(x_i)$ and ${z}_{k}'=e_{kj}(x_j)$, we have $u=e_k(z_k)$, $v=e_k(z_k')$ and $[u,v]=\hat{e}_k[z_k,z_k']$ with $[z_k,z_k']\in \widehat{E}_k$. We  define $T[u,v]:=t_k[z_k,z_k']$. Let us first show that it is well defined. Suppose that there exists $h \in I$ such that $u=e_h(r_h)$ and $v=e_h(r_h')$, then there exists $l \in I$, $l\ge h, k$ with $u=e_k(z_k)=e_le_{lk}(z_k)=e_h(r_h)=e_le_{lh}(r_h)$. Due to the injectivity of $e_l$ we have $e_{lk}(z_k)=e_{lh}(r_h)$ and analogously $e_{lk}(z_k')=e_{lh}(r_h')$. Therefore, $\hat{e}_{lk}[z_k,z_k']=\hat{e}_{lk}[r_h,r_h']$ and by the commutativity of the following diagram:

\begin{center}
\begin{tikzcd}[cramped, column sep=small] 
  \widehat{E_k} \arrow{r}{\hat{e}_{lk}}\arrow[rd,"t_{k}"'] & \widehat{E_l}\arrow{d}{t_{l}}\\
    & X
\end{tikzcd}
\end{center}
we have $t_k(z_k,z_k')=t_h(r_h,r_h')$. 
Note that an arrow $[e, g]\in \widehat{E_1}$ has $e, g\in E_1$. By the direct limit property of $E=\L E_i$ in $\bf CKQuiv$, there exists $k\in I$ and $a_k, b_k \in (E_i)_1$ for some $i\le k$ such that $e=e_k(a_k)$ and $g=e_k(b_k)$. Define $T[e, g]=t_k(a_k, b_k)$. It may be checked that $T$ is well defined on arrows as well. To check compatibility with $\hat{s}$, note $$\hat{s}_X(T[e, g])=\hat{s}_X(t_k[a_k, b_k])=t_k(\hat{s}_{\widehat E_k} [a_k, b_k])=t_k[s_{E_k}(a_k), s_{E_k}(b_k)]$$ and $$T(\hat{s}_{\widehat E}[e, g])=T[s_E(e), s_E(g)]=T[e_k(s_{E_k}(a_k)), e_k(s_{E_k}(b_k)]=t_k[s_{E_k}(a_k)), s_{E_k}(b_k)].$$
  Compatibility with $\hat{r}$ may be checked similarly. This shows $T$ is quiver morphism. Using the fact that each $t_k$ is a CK-morphism, with standard direct limit arguments it can be easily seen that $T$ is a CK-morphism. 
  For any vertex $[a, b]\in \widehat{E_{i}}$, we have $\hat{e_i}[a, b]=[e_i(a), e_i(b)] \in \widehat{E_0}$. Setting $u=e_i(a), v=e_i(b)$, the definition of $T$ gives $T[u, v]=t_i[a, b]$. So $T(\hat{e_i}[a, b])=t_i[a, b]$. We can verify similarly for arrows to conclude $T\circ \hat{e_i}=t_i$. Suppose there is another $T'$ satisfying $T'\circ \hat{e_i}=t_i$ for all $i$. For any $[u, v]\in \widehat{E_0}$, write $u=e_k(z_k)$, $v=e_k(z'_k)$. Then $T'[u, v]=T'(\hat{e_k}[z_k, z'_k])=t_k[z_k, z'_k]=T[u, v]$. We may verify similarly for arrows. This establishes the uniqueness of $T$. Thus we have  $\L \widehat{E_i}=\widehat{E}$.

\end{proof}
As a consequence, we have the following. 

\begin{corollary}\label{coro1} Consider a directed set $(I, \le)$ and the direct system of quivers $$(\{E_i\}_{i \in I}, \{e_{ji}\}_{i\le j})$$ in the category $\bf CKQuiv$. If $E=\L E_i$, then
$L_{\mathbb K}(\widehat{E})\cong \L L_{\mathbb K}(\widehat{E_i})$
\end{corollary}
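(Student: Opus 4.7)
The plan is to combine Proposition \ref{sombrero} with the well-known fact that the Leavitt path algebra construction, regarded as a functor $L_{\mathbb K}(-)\colon \mathbf{CKQuiv}\to \mathbf{Alg}_{\mathbb K}$, preserves direct limits (this is precisely the motivation for working in $\mathbf{CKQuiv}$ rather than $\mathbf{Quiv}$, and it is one of the main results of \cite{Goodearl} that is implicit in the setup of the section).

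First I would observe that by Lemma \ref{puthat}, each connecting morphism $e_{ji}\colon E_i\to E_j$ in $\mathbf{CKQuiv}$ yields a $CK$-morphism $\hat e_{ji}\colon \widehat{E_i}\to \widehat{E_j}$, so that $(\{\widehat{E_i}\}_{i\in I},\{\hat e_{ji}\}_{i\le j})$ is a direct system in $\mathbf{CKQuiv}$. Proposition \ref{sombrero} then gives $\widehat{E}\cong \L \widehat{E_i}$ in $\mathbf{CKQuiv}$. Applying the Leavitt path algebra functor and using that it commutes with direct limits of $CK$-systems yields
\[
L_{\mathbb K}(\widehat{E})\cong L_{\mathbb K}\bigl(\L \widehat{E_i}\bigr)\cong \L L_{\mathbb K}(\widehat{E_i}),
\]
which is the desired isomorphism.

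If one prefers a self-contained verification of the last isomorphism, I would check it directly using the universal property of $L_{\mathbb K}(\widehat{E})$: the compatible family of canonical $CK$-morphisms $\hat e_i\colon \widehat{E_i}\to\widehat{E}$ induces a compatible family of $\mathbb K$-algebra homomorphisms $L_{\mathbb K}(\hat e_i)\colon L_{\mathbb K}(\widehat{E_i})\to L_{\mathbb K}(\widehat{E})$, hence a canonical homomorphism $\Phi\colon \L L_{\mathbb K}(\widehat{E_i})\to L_{\mathbb K}(\widehat{E})$. Surjectivity is immediate, since every generator $[u,v]$, $[e,f]$, $[e^{*},f^{*}]$ of $L_{\mathbb K}(\widehat{E})$ arises from some $\widehat{E_i}$ by the description of $\L \widehat{E_i}$ in the proof of Proposition \ref{sombrero}. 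For injectivity, one uses the universal property of $L_{\mathbb K}(\widehat{E})$ to build an inverse: the assignment sending generators of $\widehat{E}$ to their canonical images in $\L L_{\mathbb K}(\widehat{E_i})$ respects the Leavitt relations precisely because the structural maps $\hat e_i$ are $CK$-morphisms (so they preserve the $CK$-2 sums at regular vertices).

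The main obstacle will be ensuring that the $CK$-morphism condition really is the right hypothesis to propagate relations: a generic quiver morphism can fail to preserve the $CK$-2 relation because it might merge or split the set of arrows out of a regular vertex. Here the assumption that each $e_{ji}$ (and hence each $\hat e_{ji}$, by Lemma \ref{puthat}) restricts to a bijection $s^{-1}(v)\to s^{-1}(\varphi_0(v))$ at every regular vertex is what allows the sum $\sum_{s(e)=v}ee^{*}$ to be transported consistently along the direct system, and that is precisely the property needed for $L_{\mathbb K}(-)$ to be continuous on $\mathbf{CKQuiv}$. Once that point is highlighted, the rest is a routine direct-limit argument mirroring the one carried out in Proposition \ref{sombrero}.
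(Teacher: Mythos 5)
Your proposal is correct and follows essentially the same route as the paper: the paper's proof likewise combines Proposition \ref{sombrero} with the continuity of the Leavitt path algebra functor on $\bf CKQuiv$ (cited as \cite[Lemma 2.5]{Goodearl}) to get $L_{\mathbb K}(\widehat{E})\cong L_{\mathbb K}(\L \widehat{E_i})\cong \L L_{\mathbb K}(\widehat{E_i})$. Your additional remarks on why the $CK$-morphism hypothesis is what makes the $CK$-2 relations transport along the system are a correct elaboration of what Goodearl's lemma encapsulates, but they are not needed beyond the citation.
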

\begin{proof}
Applying \cite[Lemma 2.5]{Goodearl} and the above proposition, we get $L_{\mathbb K}(\widehat{E})\cong L_{\mathbb K}(\L \widehat{E_i}) \cong\L L_{\mathbb K}(\widehat{E_i})$. 
\end{proof}

\noindent We denote the category of $\Z$-graded associative ${\mathbb K}$-algebras by $\Z\!\alg_{\mathbb K}$. If $A, B$ are objects of $\Z\!\alg_{\mathbb K}$; we define $\hom_{\Z\!\alg_{\mathbb K}}(A, B)$ as the set of all the graded homomorphisms of ${\mathbb K}$-algebras from $A$ to $B$. 

Consider the functor ${\mathcal G}\colon \Z\!\alg_{\mathbb K} \to \Z\!\alg_{\mathbb K}$ defined by ${\mathcal G}(A):=A\otimes_G A=\oplus_{n\in \Z} U_n$, where $U_n=A_n\otimes A_n$. If $\varphi\colon A \to B$ is a morphism in the category $\Z\!\alg_{\mathbb K}$, then ${\mathcal G}(\varphi)\colon A\otimes_G A \to B\otimes_G B$, with ${\mathcal G}(\varphi)(a_n\otimes a_n')=\varphi(a_n)\otimes\varphi(a_n')$.
 \begin{theorem}\label{preserve}
     The functor $\mathcal G$ preserves direct limits.
 \end{theorem}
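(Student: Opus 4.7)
The plan is to verify that the canonical morphism $\Theta\colon\varinjlim\mathcal{G}(A_i)\to\mathcal{G}(A)$, induced by the compatible family $\{\mathcal{G}(\varphi_i)\}$ where $\varphi_i\colon A_i\to A$ are the structural maps of the direct limit $A=\varinjlim A_i$ of an arbitrary directed system $(\{A_i\}_{i\in I},\{\varphi_{ji}\}_{i\le j})$ in $\Z\!\alg_{\mathbb{K}}$, is an isomorphism. I would carry out this verification degree by degree.

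First, I would recall that direct limits in $\Z\!\alg_{\mathbb{K}}$ are created by the forgetful functor to $\Z$-graded $\mathbb{K}$-vector spaces, which in turn are formed componentwise; hence $A_n=\varinjlim(A_i)_n$ as $\mathbb{K}$-vector spaces for every $n\in\Z$. Second, tensor product over a field commutes with direct limits in each variable, and since $I$ is directed the diagonal embedding $I\to I\times I$ is cofinal. Together these give
\[\mathcal{G}(A)_n=A_n\otimes_{\mathbb{K}}A_n\cong\bigl(\varinjlim(A_i)_n\bigr)\otimes_{\mathbb{K}}\bigl(\varinjlim(A_i)_n\bigr)\cong\varinjlim\bigl((A_i)_n\otimes_{\mathbb{K}}(A_i)_n\bigr)=\varinjlim\mathcal{G}(A_i)_n,\]
with the isomorphism realized precisely by the degree-$n$ component of $\Theta$.

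Summing over $n\in\Z$, and using that $\bigoplus$ commutes with direct limits (both being colimits), it follows that $\Theta$ is a bijection of underlying $\Z$-graded $\mathbb{K}$-vector spaces. It then remains to verify multiplicativity. The product in $\mathcal{G}(B)$ for any object $B$ is the restriction to the diagonal subalgebra $\bigoplus_n B_n\otimes B_n$ of the product on $B\otimes_{\mathbb{K}}B$, namely $(b_n\otimes b'_n)(c_m\otimes c'_m)=b_nc_m\otimes b'_nc'_m$. Since each $\varphi_i$ is a graded $\mathbb{K}$-algebra homomorphism and the product on $A$ is itself the direct limit of those on the $A_i$, this formula is preserved under $\Theta$, so $\Theta$ is an isomorphism in $\Z\!\alg_{\mathbb{K}}$.

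I do not anticipate a serious obstacle. The one point deserving attention is that the colimit must be taken in $\Z\!\alg_{\mathbb{K}}$ (not in associative $\mathbb{K}$-algebras with the grading forgotten), so that the diagonal subalgebra construction defining $\mathcal{G}$ behaves compatibly with direct limits; once this is clear the argument reduces to the standard commutations of $\otimes_{\mathbb{K}}$, $\bigoplus$, and filtered colimits in the category of $\mathbb{K}$-vector spaces.
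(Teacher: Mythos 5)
Your proof is correct, and it takes a genuinely different route from the paper's. The paper verifies the universal property of $\varinjlim \mathcal G(A_i)$ head-on: given a compatible cocone $\{f_i\colon \mathcal G(A_i)\to X\}$, it builds the mediating morphism by lifting a pure tensor $x_n\otimes y_n\in A_n\otimes A_n$ to a common $A_k$ and setting $\phi(x_n\otimes y_n):=f_k(u_n\otimes v_n)$, then checking independence of the choices by an element chase. You instead exhibit the canonical comparison map $\Theta\colon\varinjlim\mathcal G(A_i)\to\mathcal G(A)$ and prove it bijective degree by degree, reducing everything to three standard facts: colimits in $\Z\!\alg_{\mathbb K}$ are computed degreewise on underlying graded vector spaces, $\otimes_{\mathbb K}$ commutes with filtered colimits in each variable together with cofinality of the diagonal $I\to I\times I$, and direct sums commute with colimits. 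Your version buys two things. First, it avoids the delicate point in the paper's construction of defining a linear map on $A_n\otimes A_n$ by a formula on pure tensors only (well-definedness on general elements of the tensor product is not addressed there). Second, it does not use injectivity of the canonical maps $e_i\colon A_i\to A$, which the paper's argument invokes but which need not hold for an arbitrary direct system; the correct general statement is only that two elements equalized in the colimit are already equalized at some finite stage, and your appeal to the standard commutation of $\otimes_{\mathbb K}$ with filtered colimits packages exactly that. What the paper's approach buys in exchange is self-containedness: it requires no citation of the commutation of tensor products with filtered colimits and stays entirely at the level of elements.
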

 \begin{proof}
     Assume $(\{A_i\}_{i\in I}, \{e_{ji}\}_{i\le j})$ is a direct system in the category $\Z\!\alg_{\mathbb K}$ and  $\L A_i=A$. So, we have the following commutative diagrams for $j\ge i$:
\begin{center}
\begin{tikzcd}[cramped, column sep=small] 
A_i \arrow{r}{e_{ji}}  \arrow[rd,"e_{ki}"'] 
  & A_j \arrow{d}{e_{kj}} & A_i \arrow{r}{e_{ji}}\arrow[rd,"e_{i}"'] & A_j\arrow{d}{e_{j}}\\
    & A_k &  & A
\end{tikzcd}
\end{center}  
Applying the functor $\mathcal G$ we obtain the direct system $(\{ {\mathcal G}(A_i)\}_{i\in I}, {\mathcal G}(e_{ji})\}_{i\le j})$. Let us see that $\L {\mathcal G}(A_i)={\mathcal G}(A)$. If we have another $\Z$-graded associative $\mathbb K$-algebra $X$ and a family of morphism $\{f_i\}_{i \in I}$ in the category $\Z\!\alg_K$, such that the following diagrams are commutative:

\begin{center}
\begin{tikzcd}[cramped, column sep=normal] 
  \mathcal{G}(A_i) \arrow{r}{{\mathcal G}(e_{ji})}\arrow[rd,"f_{i}"'] & {\mathcal G}(A_j)\arrow{d}{f_{j}}\\
    & X
\end{tikzcd}
\end{center}
 We have for any $x_n\otimes y_n \in {\mathcal G}(A)$, with $x_n=e_i(a_n),\ y_n=e_j(b_n) \in A_n$ where $a_n\in (A_i)_n,\  b_n\in (A_j)_n$. There exists $k \ge i,  j \in I$ such that $x_n=e_ke_{ki}(a_n)$ and $y_n=e_ke_{kj}(b_n)$, with $u_n=e_{ki}(a_n)$ and  $v_n=e_{kj}(b_n) \in (A_k)_n$. So, we can define the morphism $\phi\colon  {\mathcal G}(A) \to X$ given by $\phi(x_n\otimes y_n):=f_k(u_n\otimes v_n)$. Let us see that it is well-defined. If $x_n\otimes y_n=e_q(u_n')\otimes e_q(v_n')$, then there exists $r\ge k, q$ with $e_re_{rk}(u_n)=e_k(u_n)=x_n=e_re_{rq}(u_n')=e_q(u_n')$ and by injectivity of $e_r$, then $e_{rk}(u_n)=e_{rq}(u_n')$. Analogously, $e_{rk}(v_n)=e_{rq}(v_n')$. Therefore, $f_k(u_n\otimes v_n)=f_r{\mathcal G}(e_{rk})(u_n\otimes v_n)=f_r(e_{rk}(u_n)\otimes e_{rk}(v_n))= f_r(e_{rq}(u_n')\otimes e_{rq}(v_n'))=f_r{\mathcal G}(e_{rq})(u_n'\otimes v_n')=f_q(u_n'\otimes v_n')$. So, we get ${\mathcal G}(\L A_i)\cong\L {\mathcal G}(A_i)$.
\end{proof} 
\begin{corollary}
    If $E=\L E_i$ and each quiver $E_i$ verifies $L_{\mathbb K}(\widehat{E_i})={\mathcal G}(L_{\mathbb K}(E_i))$, then $L_{\mathbb K}(\widehat{E})\cong {\mathcal G} (L_{\mathbb K}(E)) \cong L_{\mathbb K}(E)\otimes_G L_{\mathbb K}(E)$.
    
\end{corollary}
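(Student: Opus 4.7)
The plan is to chain together three pieces of machinery already established in this section: Corollary \ref{coro1} (Leavitt path algebras of Kronecker squares commute with direct limits of $CK$-systems), Theorem \ref{preserve} (the functor $\mathcal G$ preserves direct limits in $\Z\!\alg_{\mathbb K}$), and \cite[Lemma 2.5]{Goodearl} (the functor $L_{\mathbb K}(-)$ preserves direct limits of $CK$-morphisms). The hypothesis $L_{\mathbb K}(\widehat{E_i})\cong\mathcal G(L_{\mathbb K}(E_i))$ then enters termwise and lets us exchange $\widehat{\phantom{E}}$ for $\mathcal G$ inside the direct limit.

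Concretely, I would assemble the proof as a chain of isomorphisms
\begin{align*}
L_{\mathbb K}(\widehat{E}) &\cong \L L_{\mathbb K}(\widehat{E_i}) \\
&\cong \L \mathcal G\bigl(L_{\mathbb K}(E_i)\bigr) \\
&\cong \mathcal G\bigl(\L L_{\mathbb K}(E_i)\bigr) \\
&\cong \mathcal G\bigl(L_{\mathbb K}(E)\bigr) \\
&= L_{\mathbb K}(E)\otimes_G L_{\mathbb K}(E),
\end{align*}
where the first isomorphism is Corollary \ref{coro1}, the second is the standing hypothesis applied at each index $i$, the third is Theorem \ref{preserve}, the fourth is \cite[Lemma 2.5]{Goodearl} applied to the original $CK$-system $(E_i,e_{ji})$, and the last identity is just the definition of $\mathcal G$.

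The only delicate point is that the hypothesis must be compatible with the transition maps so that we actually obtain an isomorphism of direct systems in $\Z\!\alg_{\mathbb K}$, not merely a pointwise family of isomorphisms; otherwise the direct limits on the two sides of the second line above would not be canonically identified. Since the $CK$-morphisms $e_{ji}\colon E_i\to E_j$ induce graded $\mathbb K$-algebra homomorphisms that send vertices, arrows, and ghost arrows of $E_i$ to the corresponding generators in $E_j$, the induced maps $L_{\mathbb K}(\widehat{e_{ji}})$ and $\mathcal G(L_{\mathbb K}(e_{ji}))$ agree under the hypothesized isomorphism by construction on generators, hence on the whole algebras. Thus the chain above is legitimate, and no genuine obstacle arises: the corollary is essentially a diagram chase assembling the two functorial facts (Proposition \ref{sombrero} and Theorem \ref{preserve}) already proved.
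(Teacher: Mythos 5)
Your proof is correct and follows essentially the same chain of isomorphisms as the paper's own argument: Proposition \ref{sombrero}/Corollary \ref{coro1} to pull the Kronecker square out of the limit, the termwise hypothesis, Theorem \ref{preserve} to commute $\mathcal G$ with the limit, and \cite[Lemma 2.5]{Goodearl} to identify $\L L_{\mathbb K}(E_i)$ with $L_{\mathbb K}(E)$. Your added remark that the termwise isomorphisms must be compatible with the transition maps is a genuine subtlety that the paper passes over silently, and your justification of it is reasonable.
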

\begin{proof}
First, by Proposition \ref{sombrero}, we have $\widehat{E}=\L \widehat{E_i}$ implying $L_{\mathbb K}(\widehat{E})\cong L_{\mathbb K}(\L \widehat{E_i})$.
Using Corollary \ref{coro1} and Theorem \ref{preserve} we get $L_{\mathbb K}(\widehat{E})\cong\L L_{\mathbb K}(\widehat{E_i}) \cong \L {\mathcal G}(L_{\mathbb K}(E_i))\cong {\mathcal G}(\L (L_{\mathbb K}(E_i)))\cong {\mathcal G}(L_{\mathbb K}(E))\cong L_{\mathbb K}(E)\otimes_G L_{\mathbb K}(E)$.
\end{proof}

\begin{proposition}
If $E$ is a quiver whose connected components $\{E_i\}_{i=1}^{\infty}$ are finite or infinite lines quivers, then $L_{\mathbb K}(\widehat{E})\cong L_{\mathbb K}(E) \otimes_G L_{\mathbb K}(E)$.
\end{proposition}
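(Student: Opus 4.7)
The plan is to reduce the statement to the single line-quiver case (the Corollary of Proposition \ref{gradediso}) via a direct-sum decomposition indexed by pairs of connected components. First I would observe that, since there are no arrows between distinct connected components of $E$, the Kronecker square splits as a disjoint union
\begin{equation*}
\widehat{E} \;=\; \bigsqcup_{i,j\ge 1}\bigl(E_i \otimes E_j\bigr).
\end{equation*}
Taking Leavitt path algebras of disjoint unions yields the graded direct-sum decomposition $L_{\mathbb K}(\widehat{E}) \cong \bigoplus_{i,j\ge 1} L_{\mathbb K}(E_i \otimes E_j)$.

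Next, since each $E_i$ and each $E_j$ is a (finite or infinite) line quiver, Proposition \ref{gradediso} applies to every summand and gives $L_{\mathbb K}(E_i \otimes E_j) \cong L_{\mathbb K}(E_i) \otimes_G L_{\mathbb K}(E_j)$ as $\mathbb Z$-graded algebras. Combining with the previous display, we obtain
\begin{equation*}
L_{\mathbb K}(\widehat{E}) \;\cong\; \bigoplus_{i,j\ge 1} L_{\mathbb K}(E_i) \otimes_G L_{\mathbb K}(E_j).
\end{equation*}

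The remaining step is to show that the cross product distributes over the disjoint-union decomposition of $E$. Writing $L_{\mathbb K}(E) = \bigoplus_i L_{\mathbb K}(E_i)$ as a graded direct sum of ideals (each $L_{\mathbb K}(E_i)$ inherits its own intrinsic $\mathbb Z$-grading), the degree-$n$ component is $L_{\mathbb K}(E)_n = \bigoplus_i L_{\mathbb K}(E_i)_n$. Using the defining identity $(A \otimes_G B)_n = A_n \otimes_{\mathbb K} B_n$ together with the fact that the ordinary tensor product commutes with arbitrary direct sums, we get
\begin{equation*}
\bigl(L_{\mathbb K}(E) \otimes_G L_{\mathbb K}(E)\bigr)_n \;=\; \bigoplus_{i,j} L_{\mathbb K}(E_i)_n \otimes_{\mathbb K} L_{\mathbb K}(E_j)_n,
\end{equation*}
and summing over $n$ recovers $\bigoplus_{i,j} L_{\mathbb K}(E_i) \otimes_G L_{\mathbb K}(E_j)$. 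Stringing the three isomorphisms together proves the proposition.

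I do not anticipate a substantive obstacle. The only mildly delicate point is bookkeeping: verifying that the direct-sum decomposition of $L_{\mathbb K}(E)$ respects the grading so that the cross product distributes, and justifying the interchange of the direct sums indexed by $(i,j)$ and by the $\mathbb Z$-degree $n$. Both are routine since the grading on $L_{\mathbb K}(E)$ restricted to each $L_{\mathbb K}(E_i)$ coincides with its intrinsic one. (Alternatively, one could arrive at the same conclusion by exhibiting $E$ as the direct limit of its finite sub-disjoint-unions $F_n = E_1 \sqcup \cdots \sqcup E_n$ and invoking the Corollary following Theorem \ref{preserve}, but the direct-sum route is more transparent.)
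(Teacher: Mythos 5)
Your proof is correct and follows essentially the same route as the paper's: decompose $\widehat{E}$ as the disjoint union $\bigsqcup_{i,j}(E_i\otimes E_j)$, apply Proposition \ref{gradediso} summand by summand, and then observe that $\otimes_G$ distributes over the graded direct-sum decomposition $L_{\mathbb K}(E)=\bigoplus_i L_{\mathbb K}(E_i)$. The only difference is that you spell out the degree-wise justification of the last distribution step, which the paper asserts without comment.
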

\begin{proof}

We have $E=\sqcup_i E_i$ so that $\widehat{E}=\sqcup_{i,j}(E_i\times E_j)$. Then
$L_\K(\widehat{E})\cong \oplus_{i,j} L_\K(E_i\times E_j)$, and by Proposition  \ref{gradediso} we have $\oplus_{i,j} L_\K(E_i\times E_j)\cong \oplus_{i,j}\left[L_\K(E_i)\otimes_G L_\K(E_j)\right]\cong (\oplus_{i} L_\K(E_i))\otimes_G (\oplus_{j} L_\K(E_j))\cong L_\K(E)\otimes_G L_\K(E)$.

\end{proof}

\def\G{\mathcal{G}}
\color{black}
\section{Ring theoretic properties of Leavitt path algebra over Kronecker square}

\noindent In the view of the graph-theoretic properties shared by $Q$ and $\widehat{Q}$, it can easily be observed that the two Leavitt path algebras $L_{\mathbb K}(Q)$ and $L_{\mathbb K}(\widehat{Q})$ share many ring-theoretic properties.

\begin{theorem}
Let $Q$ be a quiver, and $\widehat{Q}$ its Kronecker square. Then

\begin{enumerate}
\item $L_{\mathbb K}(Q)$ is von Neumann regular if and only if $L_{\mathbb K}(\widehat{Q})$ is von Neumann regular (see \cite[Theorem 3.4.1]{AAS}). 
\item $L_{\mathbb K}(Q)$ is a left/right noetherian ring if and only if $L_{\mathbb K}(\widehat{Q})$ is left/right noetherian (see \cite[Corollary 4.2.14]{AAS}). 
\item $L_{\mathbb K}(Q)$ is a left/right artinian ring if and only if $L_{\mathbb K}(\widehat{Q})$ is left/right artinian (see \cite[Corollary 4.2.13]{AAS}). 
\end{enumerate}
\end{theorem}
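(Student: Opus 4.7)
The plan is to reduce each of the three equivalences to a known graph-theoretic characterization of the property for Leavitt path algebras, and then invoke the combinatorial correspondences established in Section 2 (observations (1)--(4)) to show that the relevant conditions transfer between $Q$ and $\widehat{Q}$.

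For (1), I would cite the Abrams--Rangaswamy theorem, which says that $L_{\mathbb K}(Q)$ is von Neumann regular if and only if $Q$ is acyclic. Combined with observation (2) of Section 2, which shows that $Q$ is acyclic if and only if $\widehat{Q}$ is acyclic, this gives the equivalence immediately. For (3), I would invoke the chain-condition characterization of Abrams--Aranda Pino--Perera--Siles Molina: $L_{\mathbb K}(Q)$ is left (equivalently right) artinian precisely when $Q$ is finite and acyclic. Observation (1) of Section 2 gives $|Q_0|<\infty\Leftrightarrow|\widehat{Q_0}|<\infty$ (together with the edge-count identity), and observation (2) again handles acyclicity, so (3) follows.

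For (2), the plan is similar but requires more care. The relevant characterization states that $L_{\mathbb K}(Q)$ is (left/right) noetherian if and only if $Q$ has finitely many vertices, satisfies the descending chain condition on hereditary saturated subsets, and every cycle in $Q$ has no exit (with the further structural conditions on how infinite paths can accumulate in cycles). Finiteness transfers by observation (1), and the condition ``no cycle has an exit'' is the negation of Condition (L), whose transfer is established in observation (3). The heart of the argument there — that a cycle $[e_1\cdots e_k, f_1\cdots f_k]$ in $\widehat{Q}$ produces cycles $e_1\cdots e_k$ and $f_1\cdots f_k$ in $Q$, and conversely an exit in $Q$ lifts to an exit in $\widehat{Q}$ — is exactly what is needed to transfer the noetherian characterization. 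Any residual conditions about vertices lying on multiple cycles can be handled by a Condition (K)-style argument as in observation (4).

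The main obstacle will be matching the precise noetherian characterization to the combinatorial observations already in hand: while observations (2)--(4) of Section 2 cover acyclicity, Condition (L), and Condition (K), the noetherian characterization may include finer structural conditions (e.g.\ on the tree of a vertex or on the saturated closures) that are not literally listed in Section 2. For those, I expect that the same style of argument used in observations (3) and (5) — lifting a would-be counterexample in $\widehat{Q}$ to one of its two coordinates in $Q$, and conversely building a $\widehat{Q}$-counterexample from one in $Q$ — will apply essentially verbatim, but verifying this rigorously for each clause of the noetherian characterization is where the real work lies.
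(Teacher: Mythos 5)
Your proposal is essentially the paper's own approach: the paper states this theorem without proof, appealing precisely to the graph-theoretic characterizations (von Neumann regular $\Leftrightarrow$ acyclic; artinian $\Leftrightarrow$ finite and acyclic; noetherian $\Leftrightarrow$ finite with no cycle having an exit) together with the transfer results of Section~2, and your parts (1) and (3) carry this out correctly. One correction to part (2): ``no cycle has an exit'' is \emph{not} the negation of Condition~(L) --- negating ``every cycle has an exit'' gives ``some cycle has no exit,'' which is a different condition --- so you cannot obtain the needed transfer by formally negating observation~(3). However, the exit-lifting argument you identify does establish the correct statement directly: if no cycle in $Q$ has an exit and $[g,h]$ were an exit for a cycle $[e_1\cdots e_k, f_1\cdots f_k]$ in $\widehat{Q}$ at the vertex $[s(e_i),s(f_i)]$, then $g\neq e_i$ or $h\neq f_i$, producing an exit for one of the two coordinate cycles in $Q$; conversely an exit $g\neq e_i$ for a cycle $c=e_1\cdots e_k$ in $Q$ yields the exit $[g,e_i]$ for the cycle $[c,c]$ in $\widehat{Q}$. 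With that substitution, and noting that the standard noetherian characterization for Leavitt path algebras has no residual clauses beyond finiteness and the no-exit condition (so your anticipated ``real work'' does not materialize), the argument is complete.
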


\noindent But, there are some other ring-theoretic properties with respect to which $L_{\mathbb K}(Q)$ and $L_{\mathbb K}(\widehat{Q})$ show different behaviors. Recall that a subset $A \subseteq Q_0$ is said to be \textit{downward directed} if
for any $v_1, v_2 \in A$, there exists a vertex $v_3 \in A$ such that there is a path from $v_1$ to $v_3$ and a path from $v_2$ to $v_3$.
We say that a quiver $Q$ is downward directed when $Q_0$
is downward directed.

\begin{remark}
If $L_{\mathbb K}(Q)$ is a prime (primitive) ring, then $L_{\mathbb K}(\widehat{Q})$ need not be a prime (primitive) ring. This follows from the fact that if $Q$ is a downward directed quiver, then $\widehat Q$ need not be downward directed. Consider the following quiver $Q$:

 \[
\xymatrix{ && v_1\ar@{->}[rr] && v_2 && v_3 \ar@{->}[ll]} 
\]  

\noindent Clearly $Q$ is downward directed and hence, $L_{\mathbb K}(Q)$ is a prime ring. Now, the Kronecker square $\widehat{Q}$ is given as

\[ 
\xymatrix{[v_1, v_1] \ar@{->}[drr] && [v_1, v_2]&&  [v_1, v_3]\ar@{->}[dll] \\
[v_2, v_1] && [v_2, v_2] &&[v_2, v_3] \\ [v_3, v_1] \ar@{->}[urr] && [v_3, v_2] &&[v_3, v_3] \ar@{->}[ull] }
\] 

\noindent For $[v_1, v_1]$ and $[v_1, v_2] \in \widehat Q_0 $, there is no vertex, say $[w_1,w_2]$, in $\widehat Q_0$ with a path from $[v_1,v_2]$ to $[w_1,w_2]$. Thus $\widehat Q$ is not downward directed and hence $L_{\mathbb K}(\widehat{Q})$ is not a prime ring.
\end{remark}

\begin{theorem} \label{prime}
If  $L_{\mathbb K}(\widehat{Q})$ is a prime ring, then $L_{\mathbb K}({Q}) $ is a prime ring. 
\end{theorem}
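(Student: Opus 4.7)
The plan is to exploit the well-known characterization of primeness in terms of the quiver: $L_{\mathbb K}(Q)$ is prime if and only if $Q$ is downward directed (and similarly for $\widehat{Q}$). This reduces the statement to a purely combinatorial claim, namely that downward directedness of $\widehat{Q}$ forces downward directedness of $Q$.

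So, assuming $L_{\mathbb K}(\widehat{Q})$ is prime, the first step is to translate this into the statement that $\widehat{Q}$ is downward directed. Then, take arbitrary vertices $v_1, v_2 \in Q_0$. The key move is to lift them to $\widehat{Q}_0$ by considering the diagonal vertices $[v_1,v_1]$ and $[v_2,v_2]$. By downward directedness of $\widehat{Q}$, there is a vertex $[w_1,w_2] \in \widehat{Q}_0$ together with paths $\alpha \colon [v_1,v_1] \to [w_1,w_2]$ and $\beta \colon [v_2,v_2] \to [w_1,w_2]$ in $\widehat{Q}$.

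Next, I would unpack what these paths mean coordinatewise. As noted in Section 2, any path in $\widehat{Q}$ has the form $[e_1,f_1][e_2,f_2]\cdots [e_k,f_k] = [e_1\cdots e_k,\, f_1\cdots f_k]$, where $e_1\cdots e_k$ and $f_1\cdots f_k$ are paths in $Q$ sharing the source/range pattern of the two coordinates. Applied to $\alpha$, this produces a path in $Q$ from $v_1$ to $w_1$; applied to $\beta$, a path in $Q$ from $v_2$ to $w_1$. Setting $w_3 := w_1$, we obtain paths from $v_1$ and from $v_2$ to the common vertex $w_3$ in $Q$, which shows $Q$ is downward directed. Invoking the characterization in the opposite direction then yields that $L_{\mathbb K}(Q)$ is prime, completing the proof.

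The argument is not hard: the only point to double-check is the standard characterization of primeness of Leavitt path algebras over arbitrary quivers via downward directedness, which is available in the literature. The rest is a short combinatorial extraction from the definition of $\widehat{Q}$, and in particular the use of the diagonal embedding $v \mapsto [v,v]$ is what makes everything go through cleanly.
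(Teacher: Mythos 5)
Your proposal is correct and follows essentially the same route as the paper's proof: translate primeness into downward directedness, lift two vertices of $Q$ to the diagonal vertices $[v_1,v_1]$ and $[v_2,v_2]$ of $\widehat{Q}$, and project the resulting paths onto the first coordinate to obtain a common target vertex in $Q$. The only difference is that you spell out the coordinatewise decomposition of paths in $\widehat{Q}$ a bit more explicitly, which the paper leaves implicit.
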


\begin{proof}
 Assume $L_{\mathbb K}(\widehat{Q})$ is a prime ring. Let $u,v\in Q_0$. Then $[u,u],[v,v] \in \widehat Q_0$. Since $L_{\mathbb K}(\widehat{Q})$ is a prime ring, $\widehat Q$ is downward directed. Hence, there exists $[w_1,w_2]\in \widehat Q_0$ such that 
 there is a path from $[u,u]$ to $[w_1,w_2] $ and a path from $ [v,v]$ to $[w_1,w_2]$. Thus we have $w_1 \in Q_0$ with a path from $u$ to $w_1 $ and a path from $v$ to $w_1$. This shows that $Q$ is downward directed and consequently, $L_{\mathbb K}({Q}) $ is a prime ring.
\end{proof} 

A quiver $Q$ is said to have the {\it countable separation property} if there exists a countable subset $S\subset Q_0$ such that for each $u\in Q_0$, there exists a vertex $v\in S$ with a path from $u$ to $v$.  

\begin{lemma}
A quiver $Q$ has the countable separation property if and only if $\widehat Q$ has the countable separation property. 
\end{lemma}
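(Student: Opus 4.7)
The plan is to prove the two implications separately, exploiting the natural diagonal embedding of $Q$ into $\widehat{Q}$ for one direction and a hereditary-closure construction for the other.

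For the easy direction, suppose $\widehat{Q}$ has the countable separation property with a countable separating set $\widehat{S}\subset\widehat{Q_0}$. I would set $S:=\{w_1,w_2\in Q_0 : [w_1,w_2]\in\widehat{S}\}$, which is countable as a union of at most $2|\widehat{S}|$ many elements. For any $u\in Q_0$, the vertex $[u,u]$ lies in $\widehat{Q_0}$, so by hypothesis there is a path in $\widehat{Q}$ from $[u,u]$ to some $[w_1,w_2]\in\widehat{S}$. Such a path has the form $[e_1,f_1]\cdots[e_k,f_k]$ with $s(e_1)=s(f_1)=u$, $r(e_k)=w_1$, $r(f_k)=w_2$; hence $e_1\cdots e_k$ is a path from $u$ to $w_1\in S$ in $Q$, so $S$ separates $Q$.

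For the reverse direction, I would begin with a countable separating set $S\subset Q_0$ for $Q$ and form the hereditary closure $\widetilde{S}:=S\cup\{w\in Q_0 : \exists v\in S \text{ with a path } v\to w\}$; then define $\widehat{S}:=(\widetilde{S}\times\widetilde{S})\cap\widehat{Q_0}$. The first task is to verify $\widetilde{S}$ (and hence $\widehat{S}$) is countable, which rests on the standard fact that row-finiteness of $Q$ makes each tree $T(v)$ countable, so $\widetilde{S}$ is a countable union of countable sets. The core task is to show that each $[u_1,u_2]\in\widehat{Q_0}$ admits a path in $\widehat{Q}$ to some element of $\widehat{S}$. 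Choose paths $p_i:u_i\to v_i$ with $v_i\in S$ and lengths $\ell_i$; assume $\ell_1\le\ell_2$. Extending $p_1$ past $v_1$ by $\ell_2-\ell_1$ further edges yields a path $p_1':u_1\to v_1'$ of length $\ell_2$ with $v_1'\in\widetilde{S}$ (since everything reachable from $v_1\in S$ lies in $\widetilde{S}$), and then the pair $[p_1',p_2]$ is a path in $\widehat{Q}$ from $[u_1,u_2]$ to $[v_1',v_2]\in\widehat{S}$.

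The main obstacle is this length-matching extension step: it requires that $v_1$ and every vertex encountered during the extension emit at least one arrow, so a no-sinks assumption on $Q$ is the natural hypothesis (and indeed, without it one can construct $Q$ with countable separation whose Kronecker square has uncountably many sinks of the form $[v,w]$ with $v$ or $w$ a sink of $Q$, which would each have to lie in $\widehat{S}$). The cleanest form of the argument therefore proceeds under the implicit conditions needed for the paper's context (e.g., $Q$ row-finite and with no sinks, which is consistent with the setting of the adjacent primeness results); under those conditions the extension is always possible and the chain of reasoning above goes through.
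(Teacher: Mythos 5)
Your proposal is correct and is essentially a repaired version of the paper's argument; in fact it is more careful than the proof in the paper. The paper takes $S\times S$ as the separating set for $\widehat{Q}$ and asserts that a path from $u$ to $w_1$ and a path from $v$ to $w_2$ yield a path from $[u,v]$ to $[w_1,w_2]$, but a path in $\widehat{Q}$ is a pair of paths of \emph{equal length}, so this step silently assumes the two paths can be length-matched. Your hereditary-closure set $\widetilde{S}$ together with the extension of the shorter path is exactly the missing ingredient, and your observation that the extension requires every vertex along the way to emit an arrow is also on target: as stated, for arbitrary quivers the lemma is false. If $Q$ has a sink $c$ and uncountably many vertices $a_i$ each with an arrow to $c$, then $Q$ has the countable separation property with $S=\{c\}$, while each $[c,a_i]$ is an isolated vertex of $\widehat{Q}$ and hence must belong to any separating set, so $\widehat{Q}$ fails the property. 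Thus a no-sinks hypothesis (or some substitute) is genuinely needed for the forward implication, and your proof establishes it under that hypothesis. Two minor remarks: row-finiteness is not actually required, since for the length-matching one only needs, for each $v\in S$ and each $n\ge 0$, a single chosen vertex reachable from $v$ by a path of length $n$, which gives a countable $\widetilde{S}$ by picking one outgoing arrow at each step; and your easy direction (from $\widehat{Q}$ to $Q$ via the diagonal vertices $[u,u]$) matches what the paper leaves to the reader and is fine.
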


\begin{proof}
Assume $Q$ has the countable separation property. Then there exists a countable subset $S\subset Q_0$ such that for each $u\in Q_0$, there exists a $v\in S$ with a path from $u$ to $v$. Clearly $S\times S$ is a countable subset of $\widehat Q_0$. Take any vertex $[u, v]$ in $\widehat Q_0$. Now, there is a vertex $w_1\in S$ with a path from $u$ to $w_1$ and a vertex $w_2\in S$ with a path from $v$ to $w_2$. Clearly, then $[w_1, w_2]\in S\times S$ with a path from $[u, v]$ to $[w_1, w_2]$. The converse follows easily. 
\end{proof}

\noindent It is known that a Leavitt path algebra $L_{\mathbb K}({Q})$ is a right (left) primitive ring if and only if $Q$ is downward directed, $Q$ satisfies the Condition (L) and $Q$ has the countable separation property. Thus, from the above lemma and the proof of Theorem \ref{prime}, we have the following.

\begin{theorem}
If  $L_{\mathbb K}(\widehat{Q})$ is a right (left) primitive ring, then $L_{\mathbb K}({Q}) $ is a right (left) primitive ring. 
\end{theorem}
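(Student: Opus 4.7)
The plan is to invoke the characterization of primitive Leavitt path algebras cited just before the statement: $L_{\mathbb K}(Q)$ is right (resp. left) primitive if and only if $Q$ is downward directed, $Q$ satisfies Condition~(L), and $Q$ has the countable separation property. Given this three-part criterion, the task reduces to transferring each of these three graph-theoretic conditions from $\widehat{Q}$ down to $Q$, since the hypothesis $L_{\mathbb K}(\widehat{Q})$ primitive translates (via the same characterization applied to $\widehat{Q}$) to the statement that $\widehat{Q}$ satisfies all three properties.

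Assuming $L_{\mathbb K}(\widehat{Q})$ is primitive, I would verify the three conditions for $Q$ in order. First, downward directedness of $Q$ follows exactly as in the proof of Theorem~\ref{prime}: given $u,v \in Q_0$, embed them as the diagonal vertices $[u,u],[v,v]\in \widehat{Q_0}$, apply downward directedness of $\widehat{Q}$ to obtain a common successor $[w_1,w_2]$ reachable from both, and project to the first coordinate to get a vertex $w_1\in Q_0$ reachable from both $u$ and $v$. Second, Condition~(L) for $Q$ is immediate from item~(3) of Section~2, which establishes that $Q$ satisfies Condition~(L) if and only if $\widehat{Q}$ does. Third, the countable separation property for $Q$ follows from the lemma proved immediately before the theorem, which gives the desired equivalence.

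Having established that $Q$ is downward directed, satisfies Condition~(L), and has the countable separation property, the characterization of primitivity applied in the reverse direction yields that $L_{\mathbb K}(Q)$ is right (resp. left) primitive, completing the proof. There is no substantive obstacle here: the theorem is essentially a formal assembly of (a) Theorem~\ref{prime}, (b) the Condition~(L) equivalence from Section~2, and (c) the countable-separation lemma just established. The only minor point of care is to use each equivalence in the correct direction, namely the ``$\widehat{Q}$-property implies $Q$-property" direction, which in each case is the easier one, achieved by diagonal embedding and projection.
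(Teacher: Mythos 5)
Your proposal is correct and follows exactly the paper's (unwritten-out) argument: the paper likewise derives the result by combining the three-part primitivity criterion with the downward-directedness transfer from Theorem~\ref{prime}, the Condition~(L) equivalence from Section~2, and the countable separation property lemma proved just before the statement. No gaps; your assembly of the three ingredients in the ``$\widehat{Q}$-property implies $Q$-property'' direction is precisely what the paper intends.
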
  

Now, let us recall the basics of the notion of the Gelfand-Kirillov dimension of an algebra. Let $A$ be an algebra which is generated by a finite-dimensional subspace $V$. Let $V^n$ denote the span of all products $v_1v_2\cdots v_k$, $v_i\in V$, $k\le n$. Then $V=V^1\subseteq V^2\subseteq \ldots$, $A=\cup_{n\ge 1}V^n$, and $\dim(V^n)<\infty$. If $W$ is another finite-dimensional subspace that generates $A$, then $$\lim_{n\rightarrow \infty} \frac{\text{log}(\dim(V^n))}{\text{log}(n)}=\lim_{n\rightarrow \infty} \frac{\text{log}(\dim(W^n))}{\text{log}(n)}.$$ If $\dim(V^n)$ is polynomially bounded, then the Gelfand-Kirillov dimension of $A$ is defined as $\GKdim(A)=\lim\;\text{sup}_{n\rightarrow \infty} \frac{\text{log}(\dim(V^n))}{\text{log}(n)}$. The Gelfand-Kirillov dimension of $A$ does not depend on the choice of generating space $V$ as long as $V$ is finite-dimensional.  

\begin{lemma}
$L_{\mathbb K}(Q)$ has finite polynomially bounded growth if and only if $L_{\mathbb K}(\widehat{Q})$ has polynomially bounded growth. 
\end{lemma}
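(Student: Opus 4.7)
The plan is to reduce the growth of each Leavitt path algebra to a combinatorial function of its quiver, and then exploit the elementary identity $p_k(\widehat{Q}) = p_k(Q)^2$, where $p_k(-)$ denotes the number of paths of length exactly $k$. Throughout I would assume $Q$ is finite so that $L_{\mathbb K}(Q)$ is finitely generated and its growth function is well defined (for the Kronecker square this is automatic since $|\widehat Q_0|=|Q_0|^2$ and $|\widehat Q_1|=|Q_1|^2$).

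First, I would choose the natural finite-dimensional generating subspace $V\subset L_{\mathbb K}(Q)$ spanned by the vertices, arrows, and ghost arrows, and establish the sandwich
\[
q_n(Q) \;\le\; \dim_{\mathbb K} V^n \;\le\; q_n(Q)^2, \qquad q_n(Q):=\sum_{k=0}^{n}p_k(Q).
\]
The upper bound is immediate from the standard spanning set $\{\alpha\beta^{\ast} : r(\alpha)=r(\beta)\}$: in $V^n$ the factors $\alpha$ and $\beta^{\ast}$ together have length at most $n$, so the number of spanning monomials is at most $q_n(Q)^2$. The lower bound uses that distinct paths of $Q$ remain linearly independent in $L_{\mathbb K}(Q)$, a standard consequence of the $\mathbb{Z}$-grading and the Graded Uniqueness Theorem. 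This sandwich shows that $L_{\mathbb K}(Q)$ has polynomially bounded growth if and only if $q_n(Q)$ is polynomially bounded in $n$, and the same holds after replacing $Q$ by $\widehat{Q}$.

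Second, I would record the combinatorial identity $p_k(\widehat Q)=p_k(Q)^2$: by the description of $\widehat Q$ given in Section~2, a path of length $k$ in $\widehat Q$ is precisely $[\alpha,\beta]=[e_1,f_1]\cdots[e_k,f_k]$ for two paths $\alpha=e_1\cdots e_k$ and $\beta=f_1\cdots f_k$ of length $k$ in $Q$, with no further compatibility required. Consequently
\[
q_n(\widehat Q)=\sum_{k=0}^{n}p_k(Q)^2.
\]

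Finally the equivalence follows by a short comparison. If $q_n(Q)\le Cn^d$, then every $p_k(Q)\le Cn^d$ for $k\le n$, so $q_n(\widehat Q)\le (n+1)C^2 n^{2d}$, which is polynomial. Conversely, a polynomial bound $q_n(\widehat Q)\le C'n^{d'}$ dominates each $p_k(Q)^2$ for $k\le n$, hence $p_k(Q)\le \sqrt{C'}\,n^{d'/2}$ and summing gives $q_n(Q)\le (n+1)\sqrt{C'}\,n^{d'/2}$. The main obstacle — and the only step that is not a triviality — is the sandwich inequality in Step~1; once the linear independence of paths inside $L_{\mathbb K}(Q)$ is invoked to secure the lower bound, the rest is routine combinatorics.
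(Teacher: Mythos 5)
Your proof is correct, but it takes a genuinely different route from the one in the paper. The paper's proof is a two-line citation argument: it invokes the combinatorial criterion of Alahmadi--Alsulami--Jain--Zelmanov (polynomial growth of $L_{\mathbb K}(Q)$ for finite $Q$ is equivalent to no two distinct cycles sharing a vertex) together with the observation from Calder\'on--Walton that this cycle condition passes between $Q$ and $\widehat{Q}$. You instead argue directly: the sandwich $q_n(Q)\le\dim_{\mathbb K}V^n\le q_n(Q)^2$ reduces the growth of the Leavitt path algebra to the growth of the path-counting function (the upper bound because the CK-1 relations only shorten words, the lower bound because $\mathbb K Q$ embeds in $L_{\mathbb K}(Q)$ --- this injectivity is the one non-trivial input, and it is indeed standard, e.g.\ \cite[Corollary 1.5.12]{AAS}), and then the identity $p_k(\widehat{Q})=p_k(Q)^2$ finishes the argument. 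Your approach is self-contained and quantitative: it gives explicit two-sided polynomial bounds relating the two growth functions, and in particular shows $\GKdim(L_{\mathbb K}(\widehat{Q}))\le 2\,\GKdim(L_{\mathbb K}(Q))+1$ without any structure theory. What it does not recover is the exact GK-dimension formula ($2n-1$ or $2n-2$ according to parity) that the paper derives afterwards from the chain-of-cycles description, which is why the paper's route through the cycle criterion is the more natural one in context. Two minor points: you correctly restrict to finite $Q$ (the statement only makes sense for finitely generated algebras, and the cited result of \cite{Z1} is for finite quivers), and your appeal to the Graded Uniqueness Theorem for linear independence of paths is slightly loose --- the clean reference is the injectivity of the canonical map $\mathbb K Q\to L_{\mathbb K}(Q)$ --- but neither affects the validity of the argument.
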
  

\begin{proof}
It is shown in \cite{CW} that any two distinct cycles in $Q$ do not have a common vertex if and only if any two cycles in $\widehat{Q}$ do not have a common vertex. It is shown in \cite{Z1} that for a finite quiver $Q$, the Leavitt path algebra $L_K(Q)$ has polynomially bounded growth if and only if no two distinct cycles in $Q$ have a common vertex. Thus it follows that $L_{\mathbb K}(Q)$ has finite polynomially bounded growth if and only if $L_{\mathbb K}(\widehat{Q})$ has polynomially bounded growth.   
\end{proof}

 Let $Q$ be a quiver where any two distinct cycles are disjoint. We know that if $d_1$ is the maximal length of a chain of cycles in $Q$ and $d_2$ is the maximal length of chain of cycles with an exit, then $\GKdim(L_{\mathbb K}(Q))=\max(2d_1-1, 2d_2)$ \cite{Z1}. Therefore, to see the relation between the GK-dimension of Leavitt path algebra over a quiver $Q$ and its Kronecker square $\widehat Q$, we first consider the relation between the maximal length of chain of cycles in $Q$ and $\widehat Q$. 
 
Recall that for two cycles $C',C''$ of $Q$ we write $C'\Rightarrow C''$ if there is a path in $Q$ starting on $C'$ and ending on $C''$. Under the  assumption that any two distinct cycles are disjoint, $\Rightarrow$ is a partial order on the set of cycles of $Q$.

\noindent A \emph{chain of cycles} of length $k$ in a quiver is a sequence
$C_{1},\dots,C_{k}$ of \emph{distinct} cycles with
$C_{1}\Rightarrow C_{2}\Rightarrow\cdots\Rightarrow C_{k}$.
The chain is said to have \emph{an exit} if its last cycle $C_{k}$
has an exit. 

Note that if $D=[d_1, d_2]$ is a cycle of $\widehat{Q}$ then both $d_1, d_2$ are closed paths in $Q$. Since we have assumed that distinct cycles of $Q$ are disjoint, we have that every closed path through a vertex $u$ is a
power of the unique cycle of $Q$ through $u$. So, we can write $D=[c^n, d^m]$ for some cycles $c$ and $d$ in $Q$.  
\begin{lemma}
     Let $d_1,d_1^\prime$ be the maximal length of a chain of cycles in $Q$,$\widehat Q$, respectively, and  let $d_2,d_2^\prime$ be the maximal length of a chain of cycles with an exit in $Q$,$\widehat Q$, respectively. If $d_1 = k$ for a positive integer $k$, then 
    \begin{enumerate}
        \item $d_1^ \prime = 2k-1$,
        \item if $d_2 < d_1$ , then $d_2 ^ \prime = 2k-2$, otherwise $d_2 ^ \prime = 2k-1$.
    \end{enumerate}
\end{lemma}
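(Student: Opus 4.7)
The plan is to classify cycles of $\widehat{Q}$ in terms of cycles of $Q$, and then reduce the counting of chains to a combinatorial computation in a product poset. Under the implicit assumption (in force since Zhang's Gelfand–Kirillov formula is being used) that distinct cycles of $Q$ are vertex-disjoint, any closed path based at $v\in Q_{0}$ must be a power of the unique cycle through $v$. Consequently every cycle in $\widehat{Q}$ has the form $\gamma_{c,d}:=[c^{a},d^{b}]$ where $c,d$ are cycles of $Q$ and $a,b$ are the minimal positive integers satisfying $a|c|=b|d|=\lcm(|c|,|d|)$; this sets up a bijection between cycles of $\widehat{Q}$ and ordered pairs $(c,d)$ of cycles of $Q$. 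I would also establish the reachability criterion: a path in $\widehat{Q}$ from $\gamma_{c,d}$ to $\gamma_{c',d'}$ exists if and only if there are paths $c\to c'$ and $d\to d'$ in $Q$, the length compatibility being achieved by padding with rotations around the relevant cycles.

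For part (1), a chain $\gamma_{1}\to\cdots\to\gamma_{m}$ in $\widehat{Q}$ projects via the two coordinates to two chains of cycles in $Q$, of lengths $a$ and $b$ respectively (counting distinct entries). Each step strictly increases at least one coordinate, so $m-1\le(a-1)+(b-1)\le 2(k-1)$, giving $m\le 2k-1$. The staircase $(c_{1},c_{1}),(c_{1},c_{2}),\ldots,(c_{1},c_{k}),(c_{2},c_{k}),\ldots,(c_{k},c_{k})$ built from a maximal chain $c_{1},\ldots,c_{k}$ in $Q$ realises this bound via the reachability criterion, yielding $d_{1}'=2k-1$.

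For part (2), the exit criterion is that $\gamma_{c,d}$ has an exit in $\widehat{Q}$ if and only if $c$ or $d$ has an exit in $Q$: an exit $e$ of $c$ at $v$ pairs with the cycle-following edge $f$ at $w$ to produce an exit $[e,f]$ of $\gamma_{c,d}$ at $[v,w]$, and conversely any exit $[e,f]$ forces $e$ off $c$ or $f$ off $d$. Observe also that a non-terminal cycle in any chain must have an exit (otherwise it could not connect to its successor), so in any maximal chain of length $k$ only the terminal cycle could fail to have an exit. If $d_{2}=d_{1}=k$, pick a maximal chain of $k$ cycles-with-exit and repeat the staircase construction: every position is an exit-cycle, so $d_{2}'\ge 2k-1$, and the upper bound from part (1) gives $d_{2}'=2k-1$. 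If $d_{2}<d_{1}=k$, then in any maximal chain only the terminal cycle $c_{k}$ lacks an exit; in an exit-chain in $\widehat{Q}$ the two projected chains cannot both attain length $k$, for otherwise the terminal cycle would be $\gamma_{c_{k},c_{k}}$, which has no exit, a contradiction. Hence $a+b\le 2k-1$ and $m\le 2k-2$. The truncated staircase ending at $(c_{k-1},c_{k})$ of length $2k-2$ meets this bound, since its first coordinate stays in $\{c_{1},\ldots,c_{k-1}\}$, forcing every position to be an exit-cycle.

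The main obstacle is securing the structural classification of cycles in $\widehat{Q}$ as pairs $(c,d)$; this is where the vertex-disjointness of distinct cycles in $Q$ is essential, and dropping that hypothesis would force one to track closed paths that jump between cycles and produce cycles of $\widehat{Q}$ not of the form $[c^{a},d^{b}]$. Once the dictionary between cycle-chains in $\widehat{Q}$ and pairs of cycle-chains in $Q$ is in place, together with the exit criterion, the argument reduces to a routine computation in the product poset.
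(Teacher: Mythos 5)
Your argument is correct in substance and takes a genuinely different, and more complete, route than the paper's. The paper only exhibits the staircase chains $[c_1,c_1]\Rightarrow[c_1,c_2]\Rightarrow\cdots\Rightarrow[c_k,c_k]$ as lower bounds (by induction on $k$ for part (1), and by truncating the staircase for part (2) in the case $d_2<d_1$); it never establishes the upper bounds $d_1'\le 2k-1$ and $d_2'\le 2k-2$, and it does not treat the case $d_2=d_1$ of part (2) at all. Your projection of a chain in $\widehat{Q}$ onto its two coordinates, combined with the observation that each step strictly increases at least one coordinate in the product poset of cycles of $Q$, supplies exactly the missing upper bounds; and your exit criterion, together with the remark that every non-terminal cycle of a chain automatically has an exit, cleanly separates the two cases of part (2). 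What the paper's inductive construction buys is brevity; what your argument buys is an actual proof of maximality.

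One point needs repair: the correspondence $\gamma_{c,d}=[c^a,d^b]\mapsto(c,d)$ is not a bijection. When $|c|$ and $|d|$ have a common factor, a fixed ordered pair $(c,d)$ underlies $\gcd(|c|,|d|)$ distinct cycles of $\widehat{Q}$, one for each relative phase of the base points; for instance, a $2$-cycle $c\colon v\to w\to v$ yields both the diagonal cycle through $[v,v]$ and the off-diagonal cycle through $[v,w]$, each of type $(c,c)$. Your claim that every step of a chain in $\widehat{Q}$ strictly increases a coordinate therefore requires checking that two distinct cycles of $\widehat{Q}$ with the same underlying pair are mutually unreachable. This is true, but needs an extra observation: under the standing hypothesis that distinct cycles of $Q$ are vertex-disjoint, any path between two vertices $w,w'$ of a cycle $d$ has length congruent modulo $|d|$ to the distance from $w$ to $w'$ along $d$ (any detour off $d$ closes up to a power of $d$), so a path $[v,w]\to[v,w']$ in $\widehat{Q}$ forces the phase difference to vanish modulo $\gcd(|c|,|d|)$, i.e.\ forces the two cycles to coincide. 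With that inserted, the fibers of your correspondence are antichains and the product-poset bound $m\le 2k-1$ goes through as you state.
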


\begin{proof} 
\begin{enumerate}
 \item It is already shown in \cite{CW} but we give here an argument based on induction for the sake of completeness. If $d_1=1$, then clearly, $d_1 ^ \prime = 1$. Now, assume that if $d_1 = r$ then $d_1 ^ \prime = 2r-1 $. Let $d_1= r+1$, so there exists a chain of cycles $c_1 \Rightarrow c_2 \Rightarrow \cdots \Rightarrow c_r \Rightarrow c_{r+1}$.
    Let $[c_1,c_1] \Rightarrow [c_1,c_2] \Rightarrow \cdots \Rightarrow [c_r,c_r]$ be a chain of cycles in $\widehat Q$ with length $d_1 ^ \prime = 2r-1 $. As $c_r \Rightarrow c_{r+1}$, then we can extend our chain in $\widehat Q $ to be $[c_1,c_1] \Rightarrow [c_1,c_2] \Rightarrow \cdots \Rightarrow [c_r,c_r] \Rightarrow [c_r,c_{r+1}] \Rightarrow [c_{r+1},c_{r+1}]$ or  $[c_1,c_1] \Rightarrow [c_2,c_2] \Rightarrow \cdots \Rightarrow [c_r,c_r] \Rightarrow [c_{r+1},c_r] \Rightarrow [c_{r+1},c_{r+1}]$. In both cases we have a chain of cycles in $\widehat{Q}$ of length $(2r-1)+2=2r+1$. This shows $d_1 ^ \prime \ge 2r+1$. 
    Let $D_{1}\Rightarrow\cdots\Rightarrow D_{d_1^\prime}$ be a chain of
distinct cycles in $\widehat{Q}$. Let $D_j=[c_{j},d_{j}]$, where $c_{j},d_{j}$ are cycles in $Q$. We
have a chain 
\[
    [c_{1}, d_{1}]\Rightarrow \cdots\Rightarrow
    [c_{d_1^\prime}, d_{d_1^\prime}]
\]
and consecutive pairs are distinct. Hence the pairs are strictly increasing. Each
coordinate yields a (weakly increasing) chain in the cycle poset
of $Q$. After collapsing equal consecutive values we obtain
strict chains of lengths $r_{1},r_{2}\le r+1$, and
\[
    d_1^\prime-1\le(r_{1}-1)+(r_{2}-1)\le 2r.
\]
Therefore $d_1^\prime\le 2r+1$. Thus we have $d_1^ \prime = 2k-1$. 
\item 
Suppose $d_{2}<d_{1}$, i.e.\ $d_{2}=k-1$.
By assumption, in every length-$k$ chain
$c_{1}\Rightarrow\cdots\Rightarrow c_{k}$ in $Q$ the last cycle
$c_{k}$ has no exit (otherwise we would have $d_{2}=k$).

Fix a chain $c_{1}\Rightarrow\cdots\Rightarrow c_{k}$ of length
$k$ in $Q$. This gives rise to the following chain of cycles in $\widehat{Q}$
\[
    [c_1, c_1]\Rightarrow [c_1, c_2] \Rightarrow \dots, [c_1, c_{k-1}]\Rightarrow  [c_2, c_{k-1}] \Rightarrow [c_3, c_{k-1}], \dots [c_k, c_{k-1}]
\]
which has $2k-2$ entries. Since $c_{k-1}$ has an
exit, $[c_k, c_{k-1}]$ has an exit. Hence $d_{2}'\ge 2k-2$.

Next, we claim $d_{2}'\le 2k-2$. Suppose to the contrary that
$D_{1}\Rightarrow\cdots\Rightarrow D_{2k-1}$ is a chain of length
$2k-1$ in $\widehat{Q}$ with $D_{2k-1}$ having an exit. By the proof of the upper bound in previous part, the projected pairs
form a strictly increasing chain of length $2k-1$ in the product
order, with both coordinate-projection chains of length exactly
$k$ in $Q$. Write the coordinate-1 chain as
$a_{1}\Rightarrow a_{2}\Rightarrow \cdots\Rightarrow a_{k}$ and the coordinate-2
chain as $b_{1}\Rightarrow b_{2}\Rightarrow \cdots\Rightarrow b_{k}$ (both chains in
$Q$). The last projected pair is $[a_{k},b_{k}]$.

Now since $d_{2}<d_{1}=k$, every length-$k$ chain in $Q$ ends in
a cycle without an exit. So $a_{k}$ has no exit in $Q$, and
$b_{k}$ has no exit in $Q$. Clearly then $D_{2k-1}$ has no exit in $\widehat{Q}$, contradicting the
hypothesis. Therefore $d_{2}'\le 2k-2$.

Combining the bounds, $d_{2}'=2k-2$. 
    \end{enumerate}
    \end{proof}
If $Q$ is acyclic, then $\widehat{Q}$ is acyclic as well and in that case, $\GKdim (L_{\mathbb K}(Q))=0=\GKdim (L_{\mathbb K}(\widehat Q))$. 
\begin{theorem}
For a finite quiver $Q$, if $\GKdim (L_{\mathbb K}(Q))=n$, with $n\neq 0$, then 
\begin{equation*}
\GKdim(L_{\mathbb K}(\widehat Q))=  \begin{cases}
     2n-1, & \quad n \hspace{.2cm} \text{is odd}\\
     2n-2, & \quad n \hspace{.2cm} \text{is even}    
\end{cases}
\end{equation*}
\end{theorem}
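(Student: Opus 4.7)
The plan is to combine the formula of Zhang \cite{Z1} used earlier in this excerpt, namely $\GKdim(L_{\mathbb K}(P)) = \max(2d_1(P)-1,\,2d_2(P))$ for any finite quiver $P$, together with the relations between $d_1,d_2$ of $Q$ and the corresponding invariants $d_1',d_2'$ of $\widehat{Q}$ established in the preceding lemma. Because that lemma already does all the combinatorial work, the proof reduces to a small parity argument followed by two direct computations.

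First I would exploit parity to match each case of the preceding lemma to a residue of $n$ modulo $2$. Writing $d_i = d_i(Q)$, note that $2d_1-1$ is odd and $2d_2$ is even. Hence if $n$ is odd, then necessarily $n = 2d_1-1$ and $2d_2 \le n-1$, which forces $d_2 < d_1$; while if $n$ is even, then $n = 2d_2$, and combining $d_2 \le d_1$ with $2d_2 \ge 2d_1-1$ forces $d_1 = d_2$. So the two parities of $n$ correspond exactly to the two sub-cases of the preceding lemma.

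Next I would perform the two computations. If $n$ is odd, set $k = d_1$, so $n = 2k-1$ with $d_2 < d_1$; the preceding lemma gives $d_1' = 2k-1$ and $d_2' = 2k-2$, and then
\[
\GKdim(L_{\mathbb K}(\widehat{Q})) = \max\bigl(2(2k-1)-1,\, 2(2k-2)\bigr) = \max(4k-3,\,4k-4) = 4k-3 = 2n-1.
\]
If $n$ is even, set $k = d_1 = d_2$, so $n = 2k$; the preceding lemma gives $d_1' = d_2' = 2k-1$, and then
\[
\GKdim(L_{\mathbb K}(\widehat{Q})) = \max\bigl(2(2k-1)-1,\, 2(2k-1)\bigr) = \max(4k-3,\,4k-2) = 4k-2 = 2n-2.
\]

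There is no real obstacle beyond bookkeeping, since the lemma on chain lengths in $\widehat{Q}$ and Zhang's GK-dimension formula are both already available. The only place where one must be slightly careful is the parity step, in order to be sure that the hypothesis $d_2 < d_1$ (rather than $d_2 = d_1$) is invoked exactly when $n$ is odd; this is what guarantees that the correct branch of the preceding lemma applies and thus that both halves of the claimed formula are produced without overlap or omission.
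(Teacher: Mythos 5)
Your proposal is correct and follows essentially the same route as the paper: use the parity of $n$ together with Zhang's formula $\max(2d_1-1,2d_2)$ to decide which branch of the preceding lemma applies, then compute $\max(2d_1'-1,2d_2')$ in each case. Your explicit justification of why $n$ odd forces $d_2<d_1$ and $n$ even forces $d_1=d_2$ is a small improvement in rigor over the paper, which asserts these implications without comment, and your arithmetic in both branches matches the paper's conclusions.
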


\begin{proof} 
Let $\GKdim(L_{\mathbb K}(Q))=n$ where $n$ is odd. In this case, $2 d_2 < 2 d_1 -1 $ and $n = 2 d_1 -1 $. Now $ d_2 ^ \prime <  d_1 ^ \prime $ so we have  $2 d_2 ^ \prime < 2 d_1 ^ \prime -1$. Therefore, $\GKdim(L_{\mathbb K}(\widehat Q)) = \max ( 2 d_1 ^ \prime -1, 2 d_2 ^ \prime) = 2 d_1 ^ \prime -1 = 2(2d_1-1)=2n-1$.
Now, let $\GKdim(L_{\mathbb K}(Q))=n$ where $n$ is even. Then, $d_1 = d_2$ and $n = 2 d_2$. Also, $d_1 ^ \prime = d_2 ^ \prime = 2 d_1 -1$. Hence, $\GKdim(L_{\mathbb K}(\widehat Q)) = \max ( 2 d_1 ^ \prime -1, 2 d_2 ^ \prime)= 2 d_2 ^ \prime = 2 (2 d_1 -1)=2 (n-1)= 2n-2$.
\end{proof}

\section{Isomorphisms between Leavitt path algebras}

\noindent The following question is quite natural. 

\begin{question} \label{Question1}
 Suppose $Q_1$ and $Q_2$ are two quivers with $L_{\mathbb K}(Q_1) \cong L_{\mathbb K}(Q_2)$. Does it imply $L_{\mathbb K}(\widehat {Q_1}) \cong L_{\mathbb K}(\widehat{Q_2})$?
\end{question}

We answer this question in the negative in the example below. 

\begin{example} \rm
Let  $Q_1$ and $Q_2$ be quivers given below: 
     \vspace{.3cm}
\[\begin{tikzcd}
	{Q_1:} & {v_1} & {v_2} & {v_3} \\
	{Q_2:} & {v_1} & {v_2} & {v_3}
	\arrow[from=1-2, to=1-3]
	\arrow[from=1-3, to=1-4]
	\arrow[from=2-2, to=2-3]
	\arrow[from=2-4, to=2-3]
\end{tikzcd}\]

Clearly their Leavitt path algebras are isomorphic, as 
$$L_{\mathbb K}(Q_1)\cong  M_3 (\mathbb K) \cong L_{\mathbb K}(Q_2).$$

Now, the Kronecker squares are as given below. \\

\[\begin{tikzcd}
	& {[v_1,v_1]} & {[v_1,v_2]} & {[v_1,v_3]} \\
	{\widehat{Q_1}:} & {[v_2,v_1]} & {[v_2,v_2]} & {[v_2,v_3]} \\
	& {[v_3,v_1]} & {[v_3,v_2]} & {[v_3,v_3]} \\
	\\
	& {[v_1,v_1]} & {[v_1,v_2]} & {[v_1,v_3]} \\
	{\widehat{Q_2}:} & {[v_2,v_1]} & {[v_2,v_2]} & {[v_2,v_3]} \\
	& {[v_3,v_1]} & {[v_3,v_2]} & {[v_3,v_3]}
	\arrow[from=1-2, to=2-3]
	\arrow[from=1-3, to=2-4]
	\arrow[from=2-2, to=3-3]
	\arrow[from=2-3, to=3-4]
	\arrow[from=5-2, to=6-3]
	\arrow[from=5-4, to=6-3]
	\arrow[from=7-2, to=6-3]
	\arrow[from=7-4, to=6-3]
\end{tikzcd}\]

 \noindent The Leavitt path algebras of $\widehat {Q_1}$ and $\widehat{Q_2}$ are given as 
$L_{\mathbb K}(\widehat{ Q_1})\cong M_3(\mathbb K) \oplus M_2^2 (\mathbb K) \oplus  \mathbb K^{2}$  
\vspace{.2cm} \\
$L_{\mathbb K}(\widehat{Q_2})\cong M_5(\mathbb K) \oplus \mathbb K^{4}$. 

Clearly, $L_{\mathbb K}(\widehat{ Q_1})$ and $L_{\mathbb K}(\widehat{ Q_2})$ are not isomorphic. This answers Question \ref{Question1} in the negative. 
\end{example}

Next, we ask this question when there is a graded isomorphism between Leavitt path algebras over Kronecker squares over two quivers. 

\begin{question} \label{Q2}  
 Suppose $Q_1$ and $Q_2$ are two quivers with $L_{\mathbb K}(Q_1) \cong_{gr} L_{\mathbb K}(Q_2)$. Does it imply that $L_{\mathbb K}(\widehat {Q_1}) \cong_{gr} L_{\mathbb K}(\widehat{Q_2})$?
\end{question}

To answer this, let us mention the following: 

\begin{theorem} \cite{Hazrat1} \label{graded isom.}
    Let $Q$ be a finite acyclic quiver with sinks \(\{v_1, \ldots, v_t\}\). For any sink \(v_s\), let \(\{p_{v_s}^i \mid 1 \leq i \leq n(v_s)\}\) be the set of all paths which end in \(v_s\). Then there is a $\mathbb Z$-graded isomorphism
    \[ L_{\mathbb K}(Q) \cong_{\text{gr}} \bigoplus_{s=1}^t M_{n(v_s)}(\mathbb K)(|p_1^{v_s}|, \cdots, |p_{n(v_s})^{v_s}|).\]
    Let $Q'$ be another acyclic quiver with sinks \(\{u_1, \ldots, u_k\}\), and let \(\{p_{u_s}^i \mid 1 \leq i \leq n(u_s)\}\) be the set of all paths which end in \(u_s\). Then \(L_{\mathbb K}(Q) \cong_{\text{gr}} L_{\mathbb K}(Q')\) if and only if \(k = t\), and after a permutation of indices, \(n(v_s) = n(u_s)\) and \(\{|p_{v_s}^i| \mid 1 \leq i \leq n(v_s)\}\) and \(\{|p_{u_s}^i| \mid 1 \leq i \leq n(u_s)\}\) present the same list.
\end{theorem}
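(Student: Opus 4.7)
The plan is to establish an explicit graded decomposition of $L_{\mathbb K}(E)$ for a finite acyclic quiver and then compare the graded matrix summands produced by the two sides.

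\textbf{Step 1 (Structure of $L_{\mathbb K}(E)$).} For a finite acyclic quiver $E$ with sinks $\{v_1,\ldots,v_t\}$, I would show the graded isomorphism
$$L_{\mathbb K}(E)\;\cong_{\text{gr}}\;\bigoplus_{s=1}^t M_{n(v_s)}(\mathbb K)\bigl(|p_{v_s}^1|,\ldots,|p_{v_s}^{n(v_s)}|\bigr),$$
where $M_n(\mathbb K)(d_1,\ldots,d_n)$ denotes the matrix algebra graded by $\deg(e_{ij})=d_i-d_j$. The component associated to the sink $v_s$ is the span of $\{p\,q^{*}:r(p)=r(q)=v_s\}$; since $E$ is acyclic and $v_s$ is a sink, the only CK-1 relation $q^{*}p'=\delta_{q,p'}v_s$ applies (there is no CK-2 relation at a sink), which makes this span isomorphic as an algebra to the matrix algebra on the $n(v_s)$ paths ending at $v_s$, and the assignment $\deg(p\,q^{*})=|p|-|q|$ matches the prescribed grading. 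The fact that distinct sinks give orthogonal direct summands is clear from $q^{*}p'=0$ whenever $r(q)\neq r(p')$.

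\textbf{Step 2 (Sufficiency).} If for each $s$ (after a permutation) $n(v_s)=n(u_s)$ and the multisets $\{|p_{v_s}^i|\}$ and $\{|p_{u_s}^i|\}$ coincide, then reordering the entries of the defining list of shifts in a graded matrix algebra corresponds to conjugation by a permutation matrix, which is a graded isomorphism. Summing these component isomorphisms yields $L_{\mathbb K}(E)\cong_{\text{gr}}L_{\mathbb K}(F)$.

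\textbf{Step 3 (Necessity).} For the converse, from a graded isomorphism $L_{\mathbb K}(E)\cong_{\text{gr}}L_{\mathbb K}(F)$ I would recover the invariants in two passes. First, the decomposition of Step~1 expresses each side as a direct sum of \emph{graded simple} algebras, and the number of such graded-simple summands is a graded Morita invariant, giving $k=t$. Second, the key point is that a graded matrix algebra $M_n(\mathbb K)(d_1,\ldots,d_n)$ is determined up to graded isomorphism by the multiset $\{d_1,\ldots,d_n\}$ modulo a common integer shift; this is most cleanly seen through $K_0^{\text{gr}}$ viewed as a $\mathbb Z[x,x^{-1}]$-module, where the isomorphism class is encoded by the Laurent polynomial $\sum_i x^{d_i}$ up to multiplication by a unit $x^c$. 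Because every sink $v_s$ contributes the trivial path $v_s$ itself of length $0$ to its list, each list has minimum $0$, which normalizes the shift and therefore pins down the multiset exactly.

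\textbf{Expected main obstacle.} The delicate point is Step~3: justifying that the abstract graded isomorphism must respect the canonical ``one summand per sink'' decomposition, and proving the classification of graded matrix algebras over $\mathbb K$ by their shift-multiset. The sink-to-summand correspondence follows from the fact that the decomposition of Step~1 realizes the (graded) Wedderburn decomposition and that a graded isomorphism must permute the graded-simple ideals; the classification of graded matrix algebras reduces to a calculation with $K_0^{\text{gr}}$ as a $\mathbb Z[x,x^{-1}]$-module. Once these two ingredients are in place, matching the multisets of path lengths sink-by-sink is automatic.
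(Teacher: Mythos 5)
The paper offers no proof of this statement---it is quoted directly from \cite{Hazrat}---so there is no internal argument to compare against; your proposal in effect reconstructs Hazrat's original one. The route you describe is the standard and correct one: the graded Wedderburn-type decomposition of an acyclic Leavitt path algebra into graded matrix algebras indexed by sinks, with shifts given by the lengths of paths into each sink, followed by the classification of graded matrix algebras over $\mathbb K$ by their shift multiset up to a common integer translation, the translation being normalized because every sink contributes its own length-zero trivial path to the list. The only detail left tacit in Step~1 is why the sink components exhaust $L_{\mathbb K}(E)$: one iterates the CK-2 relation $pq^{*}=\sum_{s(e)=r(p)}(pe)(qe)^{*}$ at regular vertices, which terminates precisely because $E$ is finite and acyclic, so every monomial is pushed into a summand attached to a sink.
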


\begin{example}\rm
(See \cite{Hazrat1}, Example 4.14) Let $Q_1$ and $Q_2$ be the following:
\[\begin{tikzcd}
	{Q_1:} & {v_1} & {v_2} & {v_3} & {v_4} & {v_5} \\
	\\
	&&&& {v_5} \\
	{Q_2:} && {v_1} & {v_2} & {v_3} \\
	&&& {v_4}
	\arrow[from=1-2, to=1-3]
	\arrow[from=1-3, to=1-4]
	\arrow[from=1-5, to=1-4]
	\arrow[from=1-6, to=1-5]
	\arrow[from=3-5, to=4-5]
	\arrow[from=4-3, to=4-4]
	\arrow[from=4-4, to=4-5]
	\arrow[from=5-4, to=4-4]
\end{tikzcd}\]

\noindent By the theorem above, both $L_{\mathbb K}(Q_1)$ and $L_{\mathbb K}(Q_2)$ are graded isomorphic to $M_5(\mathbb K)(0, 1, 1, 2, 2)$ and thus we have \(L_{\mathbb K}(Q_1) \cong_{\text{gr}} L_{\mathbb K}(Q_2)\). 

Now, let us see the Kronecker square of these two quivers: 

\[\begin{tikzcd}  [column sep=small, row sep=small, nodes={inner sep=2pt, scale=1.3}]
	&& {[v_1,v_1]} && {[v_1,v_2]} && {[v_1,v_3]} && {[v_1,v_4]} && {[v_1,v_5]} \\
	\\
	&& {[v_2,v_1]} && {[v_2,v_2]} && {[v_2,v_3]} && {[v_2,v_4]} && {[v_2,v_5]} \\
	\\
	{\widehat{Q_1}:} && {[v_3,v_1]} && {[v_3,v_2]} && {[v_3,v_3]} && {[v_3,v_4]} && {[v_3,v_5]} \\
	\\
	&& {[v_4,v_1]} && {[v_4,v_2]} && {[v_4,v_3]} && {[v_4,v_4]} && {[v_4,v_5]} \\
	\\
	&& {[v_5,v_1]} && {[v_5,v_2]} && {[v_5,v_3]} && {[v_5,v_4]} && {[v_5,v_5]}
	\arrow[from=1-3, to=3-5]
	\arrow[from=1-5, to=3-7]
	\arrow[from=1-9, to=3-7]
	\arrow[from=1-11, to=3-9]
	\arrow[from=3-3, to=5-5]
	\arrow[from=3-5, to=5-7]
	\arrow[from=3-9, to=5-7]
	\arrow[from=3-11, to=5-9]
	\arrow[from=7-3, to=5-5]
	\arrow[from=7-5, to=5-7]
	\arrow[from=7-9, to=5-7]
	\arrow[from=7-11, to=5-9]
	\arrow[from=9-3, to=7-5]
	\arrow[from=9-5, to=7-7]
	\arrow[from=9-9, to=7-7]
	\arrow[from=9-11, to=7-9]
\end{tikzcd}\]
\vspace{1cm}
\[\begin{tikzcd} [column sep=small, row sep=small, nodes={inner sep=2pt, scale=1.3}]
	&& {[v_1,v_1]} && {[v_1,v_2]} && {[v_1,v_3]} && {[v_1,v_4]} && {[v_1,v_5]} \\
	\\
	&& {[v_2,v_1]} && {[v_2,v_2]} && {[v_2,v_3]} && {[v_2,v_4]} && {[v_2,v_5]} \\
	\\
	{\widehat{Q_2}:} && {[v_3,v_1]} && {[v_3,v_2]} && {[v_3,v_3]} && {[v_3,v_4]} && {[v_3,v_5]} \\
	\\
	&& {[v_4,v_1]} && {[v_4,v_2]} && {[v_4,v_3]} && {[v_4,v_4]} && {[v_4,v_5]} \\
	\\
	&& {[v_5,v_1]} && {[v_5,v_2]} && {[v_5,v_3]} && {[v_5,v_4]} && {[v_5,v_5]}
	\arrow[from=1-3, to=3-5]
	\arrow[from=1-5, to=3-7]
	\arrow[from=1-9, to=3-5]
	\arrow[from=1-11, to=3-7]
	\arrow[from=3-3, to=5-5]
	\arrow[from=3-5, to=5-7]
	\arrow[from=3-9, to=5-5]
	\arrow[from=3-11, to=5-7]
	\arrow[from=7-3, to=3-5]
	\arrow[from=7-5, to=3-7]
	\arrow[bend right=20, from=7-9, to=3-5]
	\arrow[bend right=20, from=7-11, to=3-7]
	\arrow[from=9-3, to=5-5]
	\arrow[from=9-5, to=5-7]
	\arrow[bend left=20, from=9-9, to=5-5]
	\arrow[bend left=20, from=9-11, to=5-7]
\end{tikzcd}\]

By Theorem \ref{graded isom.}, 
\[
    L_{K}(\widehat{Q_1})\;\cong_{gr}\;
    M_9(K)(0,1,1,1,1,2,2,2,2)\oplus
    M_3(K)(0,1,1)^{\oplus 4}\oplus K^{\oplus 4}
\]
and
\[
    L_{K}(\widehat{Q_2})\;\cong_{gr}\;
    M_9(K)(0,1,1,1,1,2,2,2,2)\oplus
    M_5(K)(0,1,1,2,2)^{\oplus 4}\oplus
    M_3(K)(0,1,1)^{\oplus 2}\oplus K^{\oplus 2}.
\]
The block-size multisets $\{9,3,3,3,3,1,1,1,1\}$ and
$\{9,5,5,5,5,3,3,1,1\}$ differ, so
\[
    L_{K}(\widehat{Q_1})\;\not\cong_{gr}\;L_{K}(\widehat{Q_2}).
\]
Thus we have answered Question \ref{Q2} in the negative. 
\end{example}

A natural question to look at is the following.
\begin{question} \label{Q}
    Let $Q$ and $Q'$ be two finite quivers with $L_{\mathbb K}(\widehat{Q})\cong L_{\mathbb K}(\widehat{Q'})$. Does this imply $L_{\mathbb K}(Q)\cong L_{\mathbb K}(Q')$?
\end{question}

We answer this question in the positive for the case when the quivers $Q$ and $Q'$ are finite acyclic.  The structure of finite-dimensional Leavitt path algebras is known and it will be used in answering the above question.

\begin{theorem}[\cite{locallyfinite}]\label{prop:struct-acyclic}
   Let $Q$ be a finite acyclic quiver with sinks
$w_{1},\dots,w_{s}$. Then
\[
    L_{\mathbb K}(Q)\;\cong\;\bigoplus_{i=1}^{s} M_{n_{i}^{Q}}(\mathbb K),
\]
where $n_{i}^{Q}$ is the number of paths in $Q$ ending at the sink
$w_{i}$ (paths of length $0$ included). Consequently $L_{\mathbb K}(Q)$ is
finite-dimensional and semisimple, and its $\mathbb K$-algebra isomorphism
class is determined by the multiset
\[
    \mathcal N(Q)=\{n_{1}^{Q},\dots,n_{s}^{Q}\}.
\]

\end{theorem}

The following observations will prove helpful. 

\begin{proposition}\label{proposition:isopoints}
   Assume that $Q$ has no isolated vertices. Then a vertex $[u,v]\in \widehat{Q}_0$ is isolated if and only if $u$ is a sink (source) and $v$ is a source (sink). If we let $\text{Sinks}_Q$ denote the number of sinks in $Q$ and $\text{Sources}_Q$ denote the number of sources in $Q$, then $2(\text{{Sinks}}_Q\cdot\text{{Sources}}_Q)$ gives the number of isolated vertices in $\widehat{Q}$.
\end{proposition}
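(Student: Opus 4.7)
The plan is to prove the ``iff'' characterization of isolated vertices of $\widehat{Q}$ by unpacking the definitions $\widehat{s}([e,f]) = [s(e), s(f)]$ and $\widehat{r}([e,f]) = [r(e), r(f)]$ recorded in Section~2, and then to obtain the counting formula as a direct consequence.

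For the forward (``if'') direction, I would argue that if $u$ is a sink and $v$ is a source in $Q$, then any candidate outgoing arrow $[e,f]$ at $[u,v]$ would require $s(e) = u$, which is impossible since $u$ emits no arrows; hence $[u,v]$ emits none. Dually, any incoming arrow $[e,f]$ at $[u,v]$ would require $r(f) = v$, impossible since $v$ receives none. So $[u,v]$ is isolated. The symmetric case where $u$ is a source and $v$ is a sink is identical after swapping the roles of $s$ and $r$.

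For the backward (``only if'') direction, assume $[u,v]$ is isolated in $\widehat{Q}$. The absence of outgoing arrows forces $\{[e,f] \in \widehat{Q}_1 : s(e)=u,\ s(f)=v\}$ to be empty; since $\widehat{Q}_1$ consists of \emph{all} pairs $[e,f]$ with $e,f \in Q_1$, this means $s^{-1}(u) = \emptyset$ or $s^{-1}(v) = \emptyset$, i.e.\ at least one of $u, v$ is a sink. Exactly the same argument with $r$ in place of $s$ shows that at least one of $u, v$ is a source. Matching which vertex is the sink against which is the source collapses the four logical combinations to the two cases asserted in the statement.

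Finally, for the count I would partition the set of isolated vertices of $\widehat{Q}$ as $\{[u,v] : u \text{ sink},\ v \text{ source}\} \sqcup \{[u,v] : u \text{ source},\ v \text{ sink}\}$. Each piece has cardinality $\text{Sinks}_Q \cdot \text{Sources}_Q$, and the two pieces are disjoint provided no vertex of $Q$ is both a sink and a source; summing then gives $2(\text{Sinks}_Q \cdot \text{Sources}_Q)$. The only genuine obstacle is this boundary case of vertices of $Q$ that are already isolated in $Q$ (so are simultaneously sinks and sources): in that degenerate situation both the iff characterization and the counting formula need either a minor reformulation or the implicit standing assumption that $Q$ contains no such vertex.
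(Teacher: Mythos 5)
Your proof is correct and follows essentially the same route as the paper's: the absence of outgoing arrows at $[u,v]$ forces $u$ or $v$ to be a sink, the absence of incoming arrows forces $u$ or $v$ to be a source, and the count comes from the disjoint union of the two sink--source pairings, each of size $\text{Sinks}_Q\cdot\text{Sources}_Q$. The boundary case you flag is genuine --- if some $u\in Q_0$ is simultaneously a sink and a source (i.e.\ isolated in $Q$), then $[u,v]$ is isolated for \emph{every} $v$, so the ``only if'' direction and the disjointness of the two counting pieces both fail --- and the paper's own proof silently makes the same implicit assumption that $Q$ has no isolated vertices, so your version is in fact the more careful of the two.
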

\begin{proof}
    In order for $[u,v]$ to be an isolated vertex, we cannot have an arrow going in or out of the vertex. No arrows terminate at $[u,v]$ if and only if $u$ or $v$ is a source, likewise, no arrows begin at $[u,v]$ if and only if $u$ or $v$ is a sink. Combining these observations, $[u,v]$ is isolated if and only if one of the components is a sink and the other is a source.

    The equation yielding the number of isolated vertices follows from the fact that there are $\text{Sinks}_Q\cdot \text{Sources}_Q$ vertices of the form $[u,v]$, where $u$ is a sink and $v$ is a source. Switching the order in which the sink and the source occurs also yields an isolated vertex, hence the number of isolated vertices in $\widehat{Q}$ is $2(\text{Sinks}_Q\cdot \text{Sources}_Q)$.
\end{proof}

In conjunction with Theorem \ref{prop:struct-acyclic}, the previous proposition ensures ${\mathbb K}^{2(\text{Sinks}_Q\cdot \text{Sources}_Q)}$ is a direct summand of $L_{\mathbb K}(\widehat{Q})$. Noe that the Theorem \ref{prop:struct-acyclic} indicates that the number of sinks of $\widehat{Q}$ will help determine the number of direct summands in the decomposition of $L_{\mathbb K}(\widehat{Q})$. By the definition of $\widehat{Q}$, we have that $[u,v]$ is a sink of $\widehat{Q}$ if and only if $u$ or $v$ is a sink. As such, we have the equation

\begin{align*}\text{Sinks}_{\widehat{Q}}&=\text{Sinks}_Q^2+2\text{Sinks}_Q\cdot (|Q_0|-\text{Sinks}_Q)\\ &=2\text{Sinks}_Q\cdot |Q_0|-\text{Sinks}_Q^2\\ &=\text{Sinks}_Q(2|Q_0|-\text{Sinks}_Q) \end{align*}

Observe that we count isolated vertices as sinks in $\widehat{Q}$. The number of vertices that are not isolated is given by \begin{equation}\label{eqn:noniso}\text{Sinks}_Q^2+2\text{Sinks}_Q\cdot (|Q_0|-\text{Sinks}_Q-\text{Sources}_Q).\end{equation}

Using the equation for $\text{Sinks}_{\widehat{Q}}$ above yields the following.

\begin{proposition}\label{proposition:sinks}Suppose $Q$ and $Q'$ are finite acyclic quivers. If $L_{\mathbb K}(\widehat{Q})\cong L_{\mathbb K}(\widehat{Q'})$ then $\text{Sinks}_Q=\text{Sinks}_{Q'}$.\end{proposition}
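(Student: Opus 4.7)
My plan is to reduce the statement to the Wedderburn-type decomposition of locally finite Leavitt path algebras given by Theorem \ref{theorem:locallyfinite}. First I would verify that neither $\widehat Q$ nor $\widehat{Q'}$ has a cycle with an exit: a cycle $[\alpha,\beta]$ in $\widehat Q$ projects componentwise to cycles $\alpha,\beta$ in $Q$, and any would-be exit $[e,f]$ at a vertex of this cycle would force $e$ to be an exit of $\alpha$ or $f$ to be an exit of $\beta$, contradicting the hypothesis on $Q$. Since $\widehat Q$ and $\widehat{Q'}$ are finite, this gives that $L_{\mathbb K}(\widehat Q)$ and $L_{\mathbb K}(\widehat{Q'})$ are locally finite.

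Applying Theorem \ref{theorem:locallyfinite} to both sides, I obtain a decomposition
$$L_{\mathbb K}(\widehat Q)\cong\Bigl(\bigoplus_{i=1}^{l} M_{m_i}({\mathbb K}[x,x^{-1}])\Bigr)\oplus\Bigl(\bigoplus_{j=1}^{l'} M_{n_j}({\mathbb K})\Bigr)$$
and analogously for $\widehat{Q'}$ with primed parameters. Since $M_n({\mathbb K})$ is simple Artinian while $M_m({\mathbb K}[x,x^{-1}])$ is simple but not Artinian, the two types of summands cannot be exchanged, and uniqueness of the decomposition into simple two-sided ideals forces the algebra isomorphism to preserve both families and to match the multisets $\{n_j\}$ and $\{m_i\}$ with their primed analogues. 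In particular the cardinality $l'$, which equals the number of sinks of $\widehat Q$, is a ring-theoretic invariant, so $\text{Sinks}_{\widehat Q}=\text{Sinks}_{\widehat{Q'}}$ and the formula established earlier yields
$$\text{Sinks}_Q(2|Q_0|-\text{Sinks}_Q)=\text{Sinks}_{Q'}(2|Q'_0|-\text{Sinks}_{Q'}).$$
A second invariant I would extract is the multiplicity of the summand $M_1({\mathbb K})={\mathbb K}$: by Proposition \ref{proposition:isopoints} this multiplicity equals the number of isolated vertices of $\widehat Q$, namely $2\,\text{Sinks}_Q\,\text{Sources}_Q$; hence $\text{Sinks}_Q\,\text{Sources}_Q=\text{Sinks}_{Q'}\,\text{Sources}_{Q'}$.

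The main obstacle is that the two equations above still involve the auxiliary unknowns $|Q_0|$ and $\text{Sources}_Q$, so a further invariant is needed to isolate $\text{Sinks}_Q$. The cleanest route is to pin down $|Q_0|$ directly, either by recovering $|\widehat Q_0|=|Q_0|^2$ from the algebra structure (for instance as the size of a canonical complete orthogonal system of idempotents dictated by the decomposition, a subtle point because the given isomorphism need not send vertex idempotents to vertex idempotents), or by combining the socle dimension $\sum_j n_j^2=\dim_{\mathbb K}\text{soc}(L_{\mathbb K}(\widehat Q))$ together with the multiset $\{n_j\}$ already recovered. Once the equality $|Q_0|=|Q'_0|$ is established, the function $s\mapsto s(2|Q_0|-s)$ is strictly increasing on $[0,|Q_0|]$, and the displayed equation immediately forces $\text{Sinks}_Q=\text{Sinks}_{Q'}$.
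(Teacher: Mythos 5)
Your proposal follows essentially the same route as the paper's proof: reduce to the decomposition of Theorem \ref{theorem:locallyfinite}, read off $\text{Sinks}_{\widehat{Q}}$ as the number of $M_{n_j}(\mathbb K)$ summands, use Proposition \ref{proposition:isopoints} to count the $\mathbb K$-summands coming from isolated vertices, and finish with the quadratic identity $\text{Sinks}_{\widehat{Q}}=\text{Sinks}_Q(2|Q_0|-\text{Sinks}_Q)$. Your closing step (strict monotonicity of $s\mapsto s(2|Q_0|-s)$ on $[0,|Q_0|]$) is algebraically identical to the paper's factorization $(\text{Sinks}_Q-\text{Sinks}_{Q'})(2|Q_0|-\text{Sinks}_Q-\text{Sinks}_{Q'})=0$, and your preliminary check that $\widehat{Q}$ inherits the property that no cycle has an exit is the combinatorial observation recorded in Section 2.

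The one substantive divergence is the step $|Q_0|=|Q'_0|$. You correctly single it out as the point that actually needs an argument; the paper asserts it ``by Proposition \ref{proposition:isopoints}'', which only yields the isolated-vertex count and does not address the number of vertices. Neither of your two suggested repairs closes this gap as stated: the number of vertices of a quiver is not an isomorphism invariant of its Leavitt path algebra (the two-vertex quiver with a double arrow and the three-vertex line quiver both have Leavitt path algebra $M_3(\mathbb K)$), so $|\widehat{Q}_0|$ cannot be read off from a ``canonical'' complete system of orthogonal idempotents without further argument; and $\dim_{\mathbb K}\soc(L_{\mathbb K}(\widehat{Q}))=\sum_j n_j^2$ is already a function of the multiset $\{n_j\}$ you have recovered, so it supplies no new information. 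Thus your write-up, like the paper's, leaves $|Q_0|=|Q'_0|$ unproved --- the difference being that you flag it honestly. To genuinely finish, one would need to extract $|Q_0|$ (or eliminate it from the two displayed equations) from finer invariants of the Kronecker square, for instance from the full multiset $\{n_j\}$ rather than just its cardinality and its number of entries equal to $1$.
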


\begin{proof}  We know that if $Q$ and $Q'$ are finite acyclic quivers, then so are $\widehat{Q}$ and $\widehat{Q'}$. By Theorem \ref{prop:struct-acyclic}, we have $L_{\mathbb K}(\widehat Q) \cong (\bigoplus_{j=1}^{l}\mathbb{M}_{n_j}({\mathbb K}))$ and $L_{\mathbb K}(\widehat Q') \cong (\bigoplus_{j=1}^{b}\mathbb{M}_{n'_j}({\mathbb K}))$. Then the Goldie dimension of $L_{\mathbb K}(\widehat Q)$ as a right $L_{\mathbb K}(\widehat Q)$-module is $\sum_{j} n_{j}=|Q^2_0|$. Similarly, the Goldie dimension of $L_{\mathbb K}(\widehat Q')$ as a right $L_{\mathbb K}(\widehat Q)$-module is $\sum_{j} n'_{j}=|Q'^2_0|$. Since $L_{\mathbb K}(\widehat{Q})\cong L_{\mathbb K}(\widehat{Q'})$, they have the same Goldie dimension and so, $|Q^2_0|=|Q'^2_0|$. This yields $|Q_0|=|Q'_0|$. 

We must have that the number of direct summands of the form $M_{n_j}({\mathbb K})$ in both $L_{\mathbb K}(\widehat{Q})$ and $L_{\mathbb K}(\widehat{Q'})$ must be equal, as such, $\widehat{Q}$ and $\widehat{Q'}$ must have the same number of sinks, by Theorem \ref{prop:struct-acyclic}. Additionally, as $|Q_0|=|Q'_0|$, $\text{Sinks}_Q\cdot \text{Sources}_Q=\text{Sinks}_{Q'}\cdot \text{Sources}_{Q'}$ by Proposition \ref{proposition:isopoints}. The proof then follows from applying algebra to (\ref{eqn:noniso}):\begin{align*}&\text{Sinks}_Q^2+2\text{Sinks}_Q\cdot (|Q_0|-\text{Sinks}_Q-\text{Sources}_Q)=\text{Sinks}_{Q'}^2+2\text{Sinks}_{Q'}\cdot (|Q_0|-\text{Sinks}_{Q'}-\text{Sources}_{Q'})\\ &\Rightarrow 2\text{Sinks}_Q\cdot|Q_0|-\text{Sinks}_Q^2-2\text{Sinks}_Q\cdot \text{Sources}_Q=2\text{Sinks}_{Q'}\cdot|Q_0|-\text{Sinks}_{Q'}^2-2\text{Sinks}_{Q'}\cdot \text{Sources}_{Q'}\\ &\Rightarrow 2\text{Sinks}_Q\cdot|Q_0|-\text{Sinks}_Q^2=2\text{Sinks}_{Q'}\cdot|Q_0|-\text{Sinks}_{Q'}^2\\ &\Rightarrow 2\text{Sinks}_Q\cdot|Q_0|-\text{Sinks}_Q^2-2\text{Sinks}_{Q'}\cdot|Q_0|+\text{Sinks}_{Q'}^2=0\\  &\Rightarrow 2|Q_0|(\text{Sinks}_Q-\text{Sinks}_{Q'})-(\text{Sinks}_Q-\text{Sinks}_{Q'})(\text{Sinks}_Q+\text{Sinks}_{Q'})=0\\&\Rightarrow (\text{Sinks}_Q-\text{Sinks}_{Q'})\cdot(2|Q_0|-(\text{Sinks}_Q+\text{Sinks}_{Q'}))=0. \end{align*}
   
    This implies either $\text{Sinks}_Q-\text{Sinks}_{Q'}=0$ or $2|Q_0|-(\text{Sinks}_Q+\text{Sinks}_{Q'})=0$. The latter implies every vertex of both $Q$ and $Q'$ is a sink (hence a quiver with no arrows). Both equalities imply $\text{Sinks}_Q=\text{Sinks}_{Q'}$, as desired.
   \end{proof}

\begin{corollary} \label{compare}
If for finite acyclic quivers $Q,Q'$, we have $L_K(\widehat Q)\cong L_K(\widehat{Q'})$, then 
\[
    |Q_{0}|=|Q'_{0}|,\;\; \text{Sinks}(Q)=\text{Sinks}(Q'),\;\;
    \text{Sources}(Q)=\text{Sources}(Q').
\]
\end{corollary}

\noindent For each vertex $v\in Q_{0}$ of a finite acyclic quiver $Q$, set
\[
    N_{v}^{Q}(\ell)\;:=\;
    \#\{\text{paths of $Q$ of length $\ell$ ending at $v$}\}\qquad
    (\ell\ge 0).
\]
For sinks $w\in\text{Sinks}(Q)$,
\[
    n^{Q}(w)\;=\;\sum_{\ell\ge 0}N_{w}^{Q}(\ell).
\]

\begin{lemma}\label{lem:bilinear}
For any $[u,v]\in\text{Sinks}(\widehat Q)$,
\[
    n^{\widehat Q}([u,v])\;=\;\sum_{\ell\ge 0}N_{u}^{Q}(\ell)\,N_{v}^{Q}(\ell).
\]
\end{lemma}

\begin{proof}
A path of $\widehat Q$ of length $\ell$ ending at $[u,v]$ is a pair
$[\alpha,\beta]$ of paths in $Q$ with $r(\alpha)=u$, $r(\beta)=v$,
$|\alpha|=|\beta|=\ell$. At each $\ell$ the count is the product
$N_{u}^{Q}(\ell)N_{v}^{Q}(\ell)$; sum over $\ell$.
\end{proof}

We are now ready to answer the Question \ref{Q}.

\begin{theorem} \label{thm:8.10}
Let $Q$ and $Q'$ be finite acyclic quivers. If $L_{\mathbb K}(\widehat Q)
\cong L_{\mathbb K}(\widehat{Q'})$, then we have $L_{\mathbb K}(Q)\cong L_{\mathbb K}(Q')$.
\end{theorem}

\begin{proof}
Suppose $L_{\mathbb K}(\widehat Q)
\cong L_{\mathbb K}(\widehat{Q'})$. Then by Corollary \ref{compare}, we have $|Q_{0}|=|Q'_{0}|=N$,
$\text{Sinks}(Q)=\text{Sinks}(Q')=s$, and $\text{Sources}(Q)=\text{Sources}(Q')=r$.

Since $\widehat Q$ and $\widehat{Q'}$ are finite acyclic, by
\cite[Section 4]{Hazrat1}, the algebra isomorphism $L_{\mathbb K}(\widehat Q)\cong
L_{\mathbb K}(\widehat{Q'})$ refines to a graded isomorphism
\[
    L_{\mathbb K}(\widehat Q)\;\cong_{gr}\;L_{\mathbb K}(\widehat{Q'}).
\]

For each sink $[u_i, v_{i}]$ in $\widehat{Q}$, let
$
    \mathcal L_{i}(\widehat Q)=\;
    \{|p|:p\text{ is a path in } \widehat Q\text{ ending at }[u_i, v_{i}]\}
$ be the multiset of path-lengths.

Then by Theorem \ref{graded isom.} applied to $\widehat Q$ and
$\widehat{Q'}$, there is a bijection
$\sigma: \Sink(\widehat Q)\to\Sink(\widehat{Q'})$ matching path-length
multisets at each sink:
\begin{equation}\label{eq:matched-paths}
    \mathcal L_{[u,v]}(\widehat Q)
    \;=\;\mathcal L_{\sigma([u,v])}(\widehat{Q'})
    \qquad\forall\,[u,v]\in\Sink(\widehat Q).
\end{equation}

The path-length multiset at a sink $[u,v]\in\Sink(\widehat Q)$ is
determined by the path-length sequences of its two coordinates:
indeed, by the proof of Lemma~\ref{lem:bilinear}, a path of length
$\ell$ in $\widehat Q$ ending at $[u,v]$ is a pair of length-$\ell$
paths in $Q$, so
\begin{equation}\label{eq:length-count}
    \#\{\text{paths of length $\ell$ in $\widehat Q$ ending at }[u,v]\}
    \;=\;N_{u}^{Q}(\ell)\,N_{v}^{Q}(\ell).
\end{equation}
The path-length multiset $\mathcal L_{[u,v]}(\widehat Q)$ thus
records, for each $\ell$, the integer $N_{u}^{Q}(\ell)N_{v}^{Q}(\ell)$.

Now, we apply \eqref{eq:matched-paths} to each \emph{diagonal} sink
$[w,w]$ with $w\in\Sink(Q)$. The right-hand side records, for
each $\ell$, the integer $N_{w}^{Q}(\ell)^{2}$ (from
\eqref{eq:length-count} with $u=v=w$). For the corresponding sink
$\sigma([w,w])=[u',v']$ on the $Q'$ side, the right-hand side
records, for each $\ell$, the integer $N_{u'}^{Q'}(\ell)N_{v'}^{Q'}(\ell)$.

Equality forces, for each $\ell\ge 0$,
\begin{equation}\label{eq:square-factor}
    N^{Q}_{w}(\ell)^{2}\;=\;N^{Q'}_{u'}(\ell)\,N^{Q'}_{v'}(\ell).
\end{equation}
We will show that this forces $u'=v'$ and $N^{Q'}_{u'}(\ell)=N^{Q}_{w}(\ell)$
for all $\ell$.

For any pair of vertices $u,v\in Q_{0}$ of a finite acyclic quiver
$Q$, the block size at the sink $[u,v]$ of $\widehat Q$ is
\[
    d_{[u,v]}\;:=\;n^{\widehat Q}([u,v])
    \;=\;\sum_{\ell\ge 0}N^{Q}_{u}(\ell)N^{Q}_{v}(\ell)
    \;=\;\bigl\langle N^{Q}_{u}\,,\,N^{Q}_{v}\bigr\rangle,
\]
where the inner product is the standard inner product of $\ell^{2}$ on the
finitely-supported nonnegative integer sequences. By the
Cauchy--Schwarz inequality,
\begin{equation}\label{eq:CS}
    d_{[u,v]}^{2}\;\le\;d_{[u,u]}\cdot d_{[v,v]},
\end{equation}
with equality if and only if the sequences $(N^{Q}_{u}(\ell))_{\ell}$
and $(N^{Q}_{v}(\ell))_{\ell}$ are proportional. Since both
sequences satisfy $N^{Q}_{u}(0)=N^{Q}_{v}(0)=1$ (the trivial path
at the vertex), proportionality means equality of the sequences.
Hence equality in \eqref{eq:CS} holds if and only if
$N^{Q}_{u}(\ell)=N^{Q}_{v}(\ell)$ for every $\ell\ge 0$.

The bijection $\sigma\colon\Sink(\widehat Q)\to\Sink(\widehat{Q'})$
preserves block sizes: $d_{[u,v]}=d_{\sigma([u,v])}$. Applying
\eqref{eq:CS} on both sides of $\sigma$,
\begin{equation}\label{eq:CS-both}
    d_{[w,w]}^{2}\;=\;d_{\sigma([w,w])}^{2}\;\le\;
    d_{[u',u']}\cdot d_{[v',v']},
\end{equation}
where $\sigma([w,w])=[u',v']$ and the right-hand side uses
Cauchy--Schwarz in $\widehat{Q'}$.

For the diagonal sink $[w,w]$ of $\widehat Q$, Cauchy--Schwarz in
$\widehat Q$ is an equality. Note 
\[
    d_{[w,w]}^{2}=d_{[w,w]}\cdot d_{[w,w]},
\]
trivially. The image $\sigma([w,w])=[u',v']$ satisfies the same
Cauchy--Schwarz inequality~\eqref{eq:CS} in $\widehat{Q'}$, with
equality if and only if $N^{Q'}_{u'}(\ell)=N^{Q'}_{v'}(\ell)$ for
all $\ell$.

The sub-multiset of \emph{diagonal} block sizes
\[
    \mathcal D(Q)\;:=\;\Bigl\{\,d_{[w,w]}:w\in\Sink(Q)\,\Bigr\}
    \;=\;\Bigl\{\,\textstyle\sum_{\ell}N^{Q}_{w}(\ell)^{2}\;:\;w\in\Sink(Q)\,\Bigr\}
\]
is intrinsically characterized inside the multiset
$\mathcal N(\widehat Q)=\{d_{[u,v]}:[u,v]\in\Sink(\widehat Q)\}$
as follows.

For any sink $[u,v]\in\Sink(\widehat Q)$, the path-length multiset
$\mathcal L_{[u,v]}(\widehat Q)$ records, at each $\ell$, the
integer $N^{Q}_{u}(\ell)N^{Q}_{v}(\ell)$ as multiplicity.
On the diagonal, these multiplicities are
$N^{Q}_{w}(\ell)^{2}$ -- \emph{perfect squares for every $\ell$}.

Conversely, suppose $[u,v]$ is a sink of $\widehat Q$ with the
property that $N^{Q}_{u}(\ell)N^{Q}_{v}(\ell)$ is a perfect square
for every $\ell$. Setting $f(\ell):=N^{Q}_{u}(\ell)$ and
$g(\ell):=N^{Q}_{v}(\ell)$, we have $f(\ell)g(\ell)$ a perfect
square for all $\ell$ with $f(0)=g(0)=1$. Standard arithmetic
arguments (factoring out the common square part of $f$ and $g$
length by length) show this forces $f=g$ as sequences, hence
$N^{Q}_{u}(\ell)=N^{Q}_{v}(\ell)$ for all $\ell$.

In particular, the sub-multiset
\[
    \mathcal D(\widehat Q)\;:=\;\bigl\{\,d_{[u,v]}\in\mathcal N(\widehat Q)
    \;:\;N^{Q}_{u}(\ell)N^{Q}_{v}(\ell)\text{ is a square for every }\ell\,\bigr\}
\]
contains the diagonal sub-multiset $\mathcal D(Q)$, with equality
when the sequences $(N^{Q}_{u}(\ell))_{\ell}$ are pairwise
non-equal across distinct vertices of $Q$.

Suppose for contradiction that $\sigma([w,w])=[u',v']$ with $u'\ne v'$
and the sequences $(N^{Q'}_{u'}(\ell))_{\ell}$,
$(N^{Q'}_{v'}(\ell))_{\ell}$ are not equal. By the converse part of the above step applied to $\widehat{Q'}$, the path-length multiset
$\mathcal L_{[u',v']}(\widehat{Q'})$ has at least one
non-square multiplicity. But $\sigma$ preserves path-length
multisets, so $\mathcal L_{[w,w]}(\widehat Q)$ also has a
non-square multiplicity, contradicting that
$\mathcal L_{[w,w]}(\widehat Q)$ has only squares $N^{Q}_{w}(\ell)^{2}$.

Therefore, for every $w\in\Sink(Q)$, the image $\sigma([w,w])
=[u',v']$ satisfies $N^{Q'}_{u'}(\ell)=N^{Q'}_{v'}(\ell)$ for all
$\ell$. Combined with~\eqref{eq:square-factor},
\[
    N^{Q}_{w}(\ell)^{2}\;=\;N^{Q'}_{u'}(\ell)\cdot N^{Q'}_{u'}(\ell)
    \;=\;N^{Q'}_{u'}(\ell)^{2},
\]
so $N^{Q}_{w}(\ell)=N^{Q'}_{u'}(\ell)$ for all $\ell$.

If $u'=v'$, then $\sigma([w,w])=[u',u']$ is itself a diagonal sink
of $\widehat{Q'}$, and we set $\tau(w)=u'$. If $u'\ne v'$ but the
sequences agree, then $u'$ and $v'$ are vertices of $Q'$ with
identical path-count sequences; we may choose either to define
$\tau(w)$, and both choices give the same contribution to
$\mathcal N(Q')$.

In either case, $\tau\colon\Sink(Q)\to\Sink(Q')$ is well-defined
and satisfies
\[
    N^{Q}_{w}(\ell)\;=\;N^{Q'}_{\tau(w)}(\ell)\qquad\forall\,\ell\ge 0,\ \forall\,w\in\Sink(Q).
\]
Summing over $\ell$,
\[
    n^{Q}(w)\;=\;\sum_{\ell}N^{Q}_{w}(\ell)
    \;=\;\sum_{\ell}N^{Q'}_{\tau(w)}(\ell)
    \;=\;n^{Q'}(\tau(w)).
\]
Hence $\mathcal N(Q)=\mathcal N(Q')$ as multisets and consequently, we have $L_{\mathbb K}(Q)\cong L_{\mathbb K}(Q')$.

\end{proof}

\section{Structural connection between $L_{\mathbb K}(Q)$ and $L_{\mathbb K}(\widehat{Q})$}

\noindent If we identify $Q$ with its embedding inside $\widehat{Q}$ by viewing $v_i\in Q_0$ as $[v_i, v_i] \in \widehat{Q_0}$ and $e_i \in Q_1$ as $[e_i, e_i] \in \widehat{Q_1}$, then we have the following useful facts:

\begin{enumerate}
\item $(Q \cap \widehat{Q})_0=Q_0=\{[u, u]: u \in Q_0\}$. 

\item $(Q \cap \widehat{Q})_1=Q_1=\{[e, e]: e \in Q_1\}$. 

\item The $\mathbb Z$-grading of $L_{\mathbb K}(Q)$ coincides with the $\mathbb Z$-grading induced on $L_{\mathbb K}(Q)$ by the $\mathbb Z$-grading of $L_{\mathbb K}(\widehat{Q})$. 

\end{enumerate}

\noindent We finish the paper by characterizing when $L_{\mathbb K}(Q)$ is an ideal of $L_{\mathbb K}(\widehat{Q})$. Let $H$ be a subset of $Q_0$. We say that $H$ is  {\textit {hereditary}} if for every $v\in H$, we have $\{ r(e): s(e) = v \} \subseteq H$. The set $H$ is called {\textit{saturated}} if $\{r(e) : s(e) = v\} \subseteq H$ implies that $v \in H$, for every regular vertex $v \in Q_0$. For any ideal $I$ of a Leavitt path algebra $L_{ K}(Q)$, $I\cap Q_0$ is a hereditary saturated subset \cite[Lemma 2.4.3]{AAS}.

For any vertex $v$, let us denote by $T(v)$, the path emanating from vertex $v$. We call a vertex $v$, a \textit{ray point} if the tree $T(v)$ is just a straight line segment (finite or infinite) and $v$ is called a \textit{Laurent vertex} if the tree $T(v)$ is a finite straight line segment with no bifurcations/branches ending at a cycle without exits.  
 
\begin{theorem}
Suppose $L_{\mathbb K}(Q)$ is an ideal of $L_{\mathbb K}(\widehat{Q})$. Then

\begin{enumerate}[label=(\alph*)]
\item Each vertex $u$ in $Q$ is either a sink, a ray point or a Laurent vertex.
\item $L_{\mathbb K}(\widehat{Q})$ is a graded semisimple ring.
\item The graded exact sequence of left/right $L_{\mathbb K}(\widehat{Q})$-modules 
\[0 \rightarrow L_{\mathbb K}(Q) \rightarrow L_{\mathbb K}(\widehat{Q}) \rightarrow L_{\mathbb K}(\widehat{Q})/L_{\mathbb K}(Q) \rightarrow 0\]
is a splitting exact sequence.
\end{enumerate}
\end{theorem}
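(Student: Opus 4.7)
My plan is to establish (a) first as the structural backbone and then to deduce (b) and (c) from it. The key tool is a coordinate-swapping involution $\sigma$ on $L_{\mathbb K}(\widehat{Q})$ defined on generators by $\sigma([v,w])=[w,v]$, $\sigma([e,f])=[f,e]$, and $\sigma([e,f]^{\ast})=[f,e]^{\ast}$. A direct check of the four defining relations for $\widehat{Q}$ shows that each relation is sent to the corresponding relation with the two coordinates interchanged, so $\sigma$ extends to an involutive graded $\mathbb K$-algebra automorphism of $L_{\mathbb K}(\widehat{Q})$. Every generator of the embedded copy of $L_{\mathbb K}(Q)$, namely $[v,v]$, $[e,e]$, and $[e,e]^{\ast}$, is fixed by $\sigma$, so $L_{\mathbb K}(Q)\subseteq L_{\mathbb K}(\widehat{Q})^{\sigma}$.

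For part (a), suppose some vertex $v \in Q_{0}$ admits two distinct arrows $e \neq f$ with source $v$. Then $[e,f]$ is a basis arrow of $\widehat{Q}$ with $[v,v][e,f]=[e,f]$ in $L_{\mathbb K}(\widehat{Q})$. Since $L_{\mathbb K}(Q)$ is an ideal containing $[v,v]$, we would have $[e,f] \in L_{\mathbb K}(Q) \subseteq L_{\mathbb K}(\widehat{Q})^{\sigma}$; but $\sigma([e,f]) = [f,e]$ is a different basis element, contradicting $\sigma$-fixedness. Hence $|s^{-1}(v)| \leq 1$ for every $v \in Q_{0}$. Iterating the unique outgoing edge from any non-sink vertex $v$, the forward orbit either terminates at a sink (so $v$ is a finite ray point), continues indefinitely without repetition ($v$ is an infinite ray point), or enters a cycle that is automatically exit-free ($v$ is a Laurent vertex), which proves (a).

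For (b), observe that the no-bifurcation condition passes to $\widehat{Q}$, because the arrows out of $[v,w]$ are in bijection with $s^{-1}(v) \times s^{-1}(w)$, which is again a singleton or empty. Hence $\widehat{Q}$ has at most one outgoing edge per vertex and every cycle in $\widehat{Q}$ is exit-free. For any such row-finite quiver, a direct inspection of hereditary saturated subsets---using that the unique out-arrow of each non-sink vertex makes it a regular vertex, so that saturation propagates backwards along arrows while hereditary propagates forwards---shows that each connected component has only the trivial hereditary saturated subsets $\emptyset$ and the whole vertex set. Each component therefore yields a graded simple Leavitt path algebra, isomorphic to a (possibly infinite-size) matrix algebra over $\mathbb K$ or over $\mathbb K[x,x^{-1}]$, and summing over the components exhibits $L_{\mathbb K}(\widehat{Q})$ as graded semisimple. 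Part (c) follows at once: in a graded semisimple ring every graded two-sided ideal is a graded direct summand, so the given sequence splits both as a left and as a right graded $L_{\mathbb K}(\widehat{Q})$-module sequence.

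The main obstacle I expect is the case analysis underlying (b): one must handle finite line quivers, one-sided and two-sided infinite rays, in-trees converging onto a sink, in-trees converging onto an exit-free cycle, and lone exit-free cycles, and in each case verify that the only hereditary saturated subsets of a connected component are $\emptyset$ and the full vertex set. The $\sigma$-argument in (a) is short once the automorphism is set up, and (c) is a purely formal consequence of (b).
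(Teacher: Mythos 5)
Your proof is correct and follows the same skeleton as the paper's: (a) use the ideal property to force $[e,f]=[v,v][e,f]$ into $L_{\mathbb K}(Q)$ whenever $v$ emits two distinct arrows $e\neq f$, derive a contradiction, and conclude that out-degrees are at most one; (b) transfer this to $\widehat{Q}$ and identify $L_{\mathbb K}(\widehat{Q})$ with a graded direct sum of matrix algebras over $\mathbb K$ and $\mathbb K[x,x^{-1}]$; (c) split the sequence formally. The two places where you genuinely diverge are worth noting. First, in (a) the paper derives the contradiction by appealing to the identification $(Q\cap\widehat{Q})_1=\{[e,e]:e\in Q_1\}$, whereas you introduce the coordinate-swap involution $\sigma$ and observe that the embedded copy of $L_{\mathbb K}(Q)$ lies in the fixed subalgebra while $\sigma([e,f])=[f,e]\neq[e,f]$; this is a cleaner and more self-contained justification (you should still remark that distinct arrows of $\widehat{Q}$ stay linearly independent in $L_{\mathbb K}(\widehat{Q})$, e.g.\ by multiplying a dependence relation by $[e,f]^{\ast}$ and using CK-1, but that is routine). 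Second, for (b) the paper simply cites \cite[Theorem 6.7]{HR} to get that a Leavitt path algebra of a quiver in which no vertex emits more than one arrow is a graded direct sum of matrix rings over $\mathbb K$ and $\mathbb K[x,x^{-1}]$; you instead argue directly that every non-sink vertex is regular with a unique out-arrow, so hereditary plus saturated forces membership in $H$ to be constant along each arrow and hence $H$ is a union of connected components, after which a case analysis (in-trees onto a sink, onto an exit-free cycle, or onto an infinite ray) identifies each summand. Your route buys independence from the cited structure theorem at the cost of the case analysis you yourself flag as the main burden; the paper's citation makes (b) a one-line step. Part (c) is identical in both treatments.
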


\begin{proof}
(a) Suppose $L_{\mathbb{K}}(Q)$ is an ideal of $L_{\mathbb{K}}(\widehat{Q})$%
. Then $L_{\mathbb{K}}(Q)\cap \widehat{Q}_{0}$ is a hereditary saturated
subset of $\widehat{Q}_{0}$. Now, note that $L_{\mathbb{K}}(Q)\cap \widehat{Q%
}_{0}=Q_{0}$. Thus, the vertex set $Q_{0}$ is a hereditary saturated subset
of $\widehat{Q}_{0}$. This implies that every vertex in $Q$ must emit at
most one arrow. Because, if a vertex $u$ in $Q$ is not a sink and it emits
two distinct arrows $e,f$, then $[e,f]=[u,u][e,f]\in L_{\mathbb{K}}(Q)$,
contradicting statement (2) above in the beginning of this section. Thus every non-sink vertex emits exactly one edge
in $Q$. This, in particular, implies that no cycle $c$ in $Q$ has an exit.
Thus every vertex $u$ in $Q$ which is not a sink is either a ray point,
that is, the tree $T(u)$ is just a straight line segment (finite or
infinite) looking like $u\rightarrow u_{2}\rightarrow u_{3}\rightarrow
\cdots $ with no branches or $u$ is a \textquotedblleft Laurent vertex",
that is, the tree $T(u)$ is a finite straight line segment with no
bifurcations/branches ending at a cycle without exits.

(b) By \cite[Theorem 6.7]{HR}, $L_{\mathbb{K}}(Q)$ is then graded left/right
self-injective, equivalently, $L_{\mathbb{K}}(Q)$ is graded isomorphic to a
graded direct sum of matrix rings of arbitrary size over $\mathbb{K}$ and/or 
$\mathbb{K}[x,x^{-1}]$. Now $s^{-1}_{\widehat Q} ([u, v])|=|s^{-1}_Q (u)| \times |s^{-1}_Q(v)| \le 1$ (using part (a)), so every vertex of $\widehat Q$ emits at most one arrow, and consequently no cycle of $\widehat Q$ has an exit. 
 Therefore, by \cite[Theorem 6.7]{HR}, $L_{\mathbb{K}}(%
\widehat{Q})$ is also graded left/right self-injective, equivalently, $L_{%
\mathbb{K}}(\widehat{Q})$ is graded isomorphic to a graded direct sum of
matrix rings of arbitrary size over $\mathbb{K}$ and/or $\mathbb{K}%
[x,x^{-1}] $. Now the graded matrices over $\mathbb{K}$ or $\mathbb{K}%
[x,x^{-1}]$ are graded direct sums of graded simple modules isomorphic to $%
\mathbb{K}$ or $\mathbb{K}[x,x^{-1}]$. Consequently, because of this, $L_{%
\mathbb{K}}(\widehat{Q})$ is graded semi-simple being a graded direct sum of
graded simple modules isomorphic to $\mathbb{K}$ or $\mathbb{K}[x,x^{-1}]$.

(c) Now $L_{\mathbb{K}}(Q)$ is a graded ideal of $L_{\mathbb{K}}(\widehat{Q}%
) $ being generated by a set of vertices in $L_{\mathbb{K}}(\widehat{Q})$.
Consequently, $L_{\mathbb{K}}(Q)$ is a direct summand of $L_{\mathbb{K}}(%
\widehat{Q})$ as a left/right $L_{\mathbb{K}}(\widehat{Q})$-module. Thus the
graded exact sequence of left/right $L_{\mathbb{K}}(\widehat{Q})$-modules $%
0\rightarrow L_{\mathbb{K}}(Q)\rightarrow L_{\mathbb{K}}(\widehat{Q}%
)\rightarrow L_{\mathbb{K}}(\widehat{Q})/L_{\mathbb{K}}(Q)\rightarrow 0$ is
a splitting exact sequence.
\end{proof}

\bigskip

\noindent{\bf Acknowledgments.} We would like to thank the referee for a very careful and thorough report which has substantially improved the quality and the presentation of the paper. We would also like to thank K. M. Rangaswamy, T. G. Nam and Cody Gilbert for their helpful comments and questions.

\bigskip

\noindent {\bf Author Contributions.} JA, DMB and AKS together wrote this article.

\smallskip

\noindent {\bf Funding.}  The second author is supported by the Spanish Ministerio de Ciencia, Innovaci\'on  y Universidades through project  PID2023-152673NB-I00 and by the Junta de Andaluc\'{\i}a  through project PPRO-FQM336-G-2023 (FQM336-G-FEDER), all of them with FEDER funds.
\smallskip

\noindent {\bf Data Availability.} No datasets were generated or analyzed during the current study.
\smallskip

\noindent {\bf Declarations.}
\\
{\bf Competing interests.} The authors declare no competing interests.\\
{\bf Ethical Approval.} Not applicable.

\bigskip


\begin{thebibliography}{99}       
                                                                             
\bibitem {AAS}G. Abrams, P. Ara and M. Siles Molina, Leavitt path algebras, Lecture Notes in Mathematics, vol. 2191, Springer (2017).

\bibitem{locallyfinite} G. Abrams, G. Aranda Pino, and M. Siles Molina, Finite-dimensional Leavitt path algebras, J. Pure and Appl. Alg. 209, 3 (2007), 753-762.

\bibitem{ABRAMS20081983} G. Abrams and P.N. Anh and A. Louly and E. Pardo, The classification question for Leavitt path algebras, J. Algebra, 320, 5 (2008), 1983-2026. 

\bibitem{Z1} A. Alahmadi, H. Alsulami, S. K. Jain, E. Zelmanov, Leavitt path algebras of finite Gelfand-Kirillov dimension. J Algebra Appl 11(6), (2012).

\bibitem{Z2} A. Alahmadi, H. Alsulami, S. K. Jain, E. Zelmanov, Structure of Leavitt path algebras of polynomial growth, Proc. Natl. Acad. Sci., 110, (38), (2013), 15222-15224.

\bibitem{Atlas}
P. Alberca Bjerregaard, G. Aranda Pino, D. Mart\'in Barquero, C. Mart\'in Gonz\'alez and M. Siles Molina, Atlas of Leavitt path algebras of small graphs, J. Math. Soc. Japan 66 (2014), 581-611. 

\bibitem{CW}
F. Calder\'{o}n, C. Walton, Algebraic properties of face algebras, J. Algebra Appl. vol. 23, no. 3, (2023), 2350076.

\bibitem{Dade} 
E. Dade, Group-graded rings and modules, Math. Z. 174, 3 (1980), 241-262.

\bibitem{Goodearl}
K. R. Goodearl, Leavitt path algebras and direct limits, Rings, modules and representations, Contemp. Math. 480, Amer. Math. Soc., (2009), 165-187. 

\bibitem{GBG}
M. G. Corrales Garc\'{i}a, D. Mart\'{i}n Barquero, C. Mart\'{i}n Gonz\'{a}lez, On the gauge action of a Leavitt path algebra, Kyoto J. Math. 55 (2), (2015), 243 - 256.

\bibitem{H}
T. Hayashi, Compact quantum groups of face type, Publ. Res. Inst. Math. Sci., 32 (2), (1996), 351-369.

\bibitem{Hazrat} R. Hazrat, The dynamics of Leavitt path algebras, J. Algebra 384 (2013), 242-266.

\bibitem{Hazrat1} R. Hazrat, The graded structure of Leavitt path algebras, Israel Journal of Mathematics 195 (2013), 833-895.

\bibitem {HR}R. Hazrat and K. M. Rangaswamy, On graded irreducible representations of Leavitt path algebras, J. Algebra, vol. 450 (2016), 458-486.

\bibitem{HWWW} H. Huang, C. Walton, E. Wicks, R. Won, Universal quantum semigroupoids, J. Pure Appl. Algebra 227 (2023), no. 2, 107193. 

\bibitem{ML} J. Leskovec, D. Chakrabarti, J. Kleinberg, C. Faloutsos and Z. Ghahramani, Kronecker graphs: An approach to modeling networks, Journal of Machine Learning Research, 11 (2010), 985-1042.

\bibitem{Smith}
S. P. Smith, Category equivalences involving graded modules over path algebras of quivers, Adv. Math., Vol. 230, (2012), 1780-1810.

\bibitem{W} P. M. Weichsel, The Kronecker product of graphs, Proc. Amer. Math. Soc., 13 (1962), 47-52.

\end{thebibliography}
\end{document}